\documentclass{amsart}

\usepackage{amssymb,amsmath,amsthm,latexsym,booktabs,youngtab,tikz,mdframed}

\usetikzlibrary{positioning,arrows,calc}

\mdfdefinestyle{mystyle}{ innerleftmargin = -7 mm, leftmargin = 5 mm, rightmargin = 5 mm }

\theoremstyle{definition}
\newtheorem{definition}{Definition}[section]
\newtheorem{example}[definition]{Example}
\newtheorem{remark}[definition]{Remark}
\newtheorem{problem}[definition]{Problem}

\theoremstyle{plain}
\newtheorem{lemma}[definition]{Lemma}
\newtheorem{proposition}[definition]{Proposition}
\newtheorem{theorem}[definition]{Theorem}
\newtheorem{corollary}[definition]{Corollary}
\newtheorem{conjecture}[definition]{Conjecture}

\newcommand{\sgn}{\epsilon}
\newcommand{\esgn}{\xi}
\newcommand{\Epsilon}{\mathcal{E}}
\newcommand{\nn}{\!\!}
\newcommand{\nnn}{\!\!\!}

\allowdisplaybreaks


\begin{document}

\title[The symmetric group and polynomial identities]
{Structure theory for the group algebra of\\the symmetric group,
with applications to polynomial identities for the octonions}

\author[Bremner]{Murray R. Bremner}

\address{Department of Mathematics and Statistics, University of Saskatchewan, Canada}

\email{bremner@math.usask.ca}

\author[Madariaga]{Sara Madariaga}

\address{Department of Mathematics and Statistics, University of Saskatchewan, Canada}

\email{madariaga@math.usask.ca}

\author[Peresi]{Luiz A. Peresi}

\address{Instituto de Matem\'atica e Estat\'istica, Universidade de S\~ao Paulo, Brazil}

\email{peresi@ime.usp.br}

\subjclass[2010]{Primary 20C30. Secondary 16K20, 16S34, 17A30, 17A75, 17D05, 17-08, 20B30, 20C40}

\keywords{Symmetric group, representation theory, group algebra, Young tableaux, idempotents, matrix units, 
two-sided ideals, Wedderburn decomposition, Clifton's algorithm, 
polynomial identities, nonassociative algebra, octonions, computer algebra}

\begin{abstract}
In part 1, we review the structure theory of $\mathbb{F} S_n$, the group algebra of
the symmetric group $S_n$ over a field of characteristic 0.
We define the images $\psi(E^\lambda_{ij})$ of the matrix units $E^\lambda_{ij}$ ($1 \le i, j \le d_\lambda$),
where $d_\lambda$ is the number of standard tableaux of shape $\lambda$,
and obtain an explicit construction of Young's isomorphism
$\psi\colon \bigoplus_\lambda M_{d_\lambda}(\mathbb{F}) \to \mathbb{F} S_n$.
We then present Clifton's algorithm for the construction of the representation matrices
$R^\lambda(p) \in M_{d_\lambda}(\mathbb{F})$ for all $p \in S_n$, and obtain the reverse isomorphism
$\phi\colon \mathbb{F} S_n \to \bigoplus_\lambda M_{d_\lambda}(\mathbb{F})$.

In part 2, we apply the structure theory of $\mathbb{F} S_n$ to the study of multilinear polynomial identities 
of degree $n \le 7$ for the algebra $\mathbb{O}$ of octonions over a field of characteristic 0.
We compare our results with earlier work of Racine, Hentzel \& Peresi, and Shestakov \& Zhukavets on
the identities of degree $n \le 6$.
We use computational linear algebra to verify that every identity in degree 7 is a consequence of the known identities
of lower degrees: there are no new identities in degree 7.
We conjecture that the known identities of degree $\le 6$ generate all octonion identities in characteristic 0.
\end{abstract}

\dedicatory{To our colleague Irvin Roy Hentzel on his 71st birthday}

\maketitle

\tableofcontents


\section{Structure theory for the group algebra of the symmetric group}

In this first part, we study the structure of the group algebra $\mathbb{F} S_n$ of the symmetric group $S_n$ 
on $n$ letters.
As a vector space over $\mathbb{F}$, $\mathbb{F} S_n$ has basis 
$\{ \sigma \mid \sigma \in S_n \}$, and the associative multiplication is defined on basis elements by 
the product in $S_n$ and extended bilinearly.
We assume throughout that $\mathbb{F}$ is a field of characteristic 0.

By the classical structure theory of associative algebras, we know that $\mathbb{F} S_n$ is semisimple, 
and hence isomorphic to the direct sum of full matrix algebras with entries in division algebras over $\mathbb{F}$.
In fact, each of these division algebras is isomorphic to $\mathbb{F}$, and the Wedderburn decomposition 
is given by two isomorphisms,
  \begin{equation}\label{iso}
  \tag{W}
  \phi \colon \mathbb{F} S_n \, \longrightarrow \, \bigoplus_\lambda M_{d_\lambda}(\mathbb{F}),
  \qquad
  \psi \colon \bigoplus_\lambda M_{d_\lambda}(\mathbb{F}) \, \longrightarrow \, \mathbb{F} S_n,  
  \end{equation}
where the sum is over all partitions $\lambda$ of $n$, and $d_\lambda$ is the dimension of 
the irreducible representation of $S_n$ corresponding to $\lambda$.

The matrices obtained by restricting $\phi$ to $S_n$, and taking the component of $\phi$ for partition $\lambda$,
have entries in $\{0,\pm1\}$ and form the natural representation of $S_n$.
We will show how to efficiently compute these matrices for all $\lambda$ and all
$p \in S_n$.

Each matrix algebra $M_{d_\lambda}(\mathbb{F})$ has a basis of matrix units $E^\lambda_{ij}$ for
$i, j = 1, \dots, d_\lambda$ which multiply according to the standard relations,
\[
E^\lambda_{ij} E^\mu_{k\ell} = \delta_{\lambda\mu} \delta_{jk} E^\lambda_{i\ell}.
\]
The isomorphism $\psi$ produces elements $\psi(E^\lambda_{ij})$ in $\mathbb{F} S_n$
which obey the same equations.
We show how to calculate these elements of $\mathbb{F} S_n$.

None of the material in this first part is original.
We compiled the results from many sources, and attempted to make the terminology more contemporary and the notation
simpler and more consistent.
The structure theory of $\mathbb{F} S_n$ was original worked out by Young \cite{Young1977}.
The proofs in Young's papers were simplified by Rutherford \cite{Rutherford1948}, and
the theory was reformulated in more modern terminology and notation by Boerner \cite{Boerner1963},
following suggestions by von Neumann and van der Waerden \cite{vanderWaerden1970}.
A substantial simplification of the algorithms for computing the matrices in the natural representation 
(the isomorphism $\phi$) was introduced by Clifton \cite{Clifton1980,Clifton1982}.
Our exposition is based on the Ph.D.~thesis of Bondari \cite{Bondari1993,Bondari1997}.


\subsection{Young diagrams and tableaux}

We start by giving the definitions and elementary properties of the basic objects in the theory.
The symmetric group $S_n$ is the group of all permutations of the set $\{1, \dots, n \}$.
We write $\lambda \vdash n$ to indicate that $\lambda$ is a partition of $n$; that is,
$\lambda = ( n_1, \dots, n_k )$ where $n = n_1 + \cdots + n_k$ and $n_1 \ge \cdots \ge n_k \ge 1$.
If $n \le 9$ then we write unambiguously $\lambda = n_1 \cdots n_k$.

\begin{definition}
The \textbf{Young diagram} $Y^\lambda$ of the partition $\lambda = ( n_1, \dots, n_k )$ consists of
$k$ left-justified rows of empty square boxes where row $i$ contains $n_i$ boxes.
\end{definition}

\begin{example}
Young diagrams for some partitions of $n = 9$:
\[
Y^{531}
=
\begin{array}{c}
\yng(5,3,1)
\end{array}
\qquad
Y^{4221}
=
\begin{array}{c}
\yng(4,2,2,1)
\end{array}
\qquad
Y^{32211} =
\begin{array}{c}
\yng(3,2,2,1,1)
\end{array}
\]
\end{example}

\begin{definition}
Suppose that $\lambda = ( n_1, \dots, n_k )$ and $\lambda' = ( n'_1, \dots, n'_\ell )$ are partitions of $n$.
We say that $\lambda \prec \lambda'$ (equivalently, $Y^\lambda \prec Y^{\lambda'}$) if and only if
either $n_1 < n'_1$ or there exists $i \ge 1$ such that $n_1 = n'_1$, \dots, $n_i = n'_i$ but $n_{i+1} < n'_{i+1}$.
\end{definition}

\begin{example}
The seven Young diagrams for $n = 5$ in decreasing order:
\[
\begin{array}{c} \yng(5) \end{array}\quad
\begin{array}{c} \yng(4,1) \end{array}\quad
\begin{array}{c} \yng(3,2) \end{array}\quad
\begin{array}{c} \yng(3,1,1) \end{array}\quad
\begin{array}{c} \yng(2,2,1) \end{array}\quad
\begin{array}{c} \yng(2,1,1,1) \end{array}\quad
\begin{array}{c} \yng(1,1,1,1,1) \end{array}\quad
\]
\end{example}

\begin{definition}
A \textbf{Young tableau} $T^\lambda$ of shape $\lambda$ where $\lambda \vdash n$
consists of a bijective assignment of the numbers $1, \dots, n$ to the boxes in the Young diagram $Y^\lambda$.
The number in row $i$ and column $j$ will be denoted $T(i,j)$.
The sequence of numbers from left to right in row $i$ will be denoted $T(i,-)$;
the sequence of numbers from top to bottom in column $j$ will be denoted $T(-,j)$.
A tableau is \textbf{standard} if all the sequences $T(i,-)$ and $T(-,j)$ are increasing.
\end{definition}

\begin{remark}
The number $d_\lambda$ of standard tableaux for the Young diagram $Y^\lambda$ is given by the hook formula,
  \[
  d_\lambda = \frac{n!}{\prod_{i,j} |h_{ij}|},
  \]
where $|h_{ij}|$ is the number of boxes in the hook with corner at position $(i,j)$:
  \[
  h_{ij} = \{ \, (i,j') \mid j \le j' \, \} \cup \{ \, (i',j) \mid i \le i' \, \}.
  \]
Another way to write this formula which is easier to implement on a computer is
  \[
  d_\lambda = n! \, \frac{\prod_{i<j}(m_i-m_j)}{\prod_i m_i!},
  \]
where $m_i = n_i + k - i$ ($i=1,\dots,k$) for $\lambda = ( n_1, \dots, n_k )$; see \cite[Theorem 4.2]{Boerner1963}.
For a detailed discussion of the hook formula, see \cite[\S 5.1.4, Theorem H]{Knuth1998}.
For a modern bijective proof of the hook formula, see \cite{NPS1997}.
\end{remark}

\begin{definition}
Given two tableaux $T$ and $T'$ of shape $\lambda \vdash n$, let $i$ be the least row index for which
$T(i,-) \ne T'(i,-)$, and let $j$ be the least column index for which $T(i,j) \ne T'(i,j)$.
The \textbf{lexicographical order} (lex order) on tableaux is defined by $T \prec T'$ if and only if $T(i,j) < T'(i,j)$.
\end{definition}

\begin{example}
The standard tableaux for $n = 5$, $\lambda = 32$ in lex order:
\[
\young(123,45) \qquad\quad
\young(124,35) \qquad\quad
\young(125,34) \qquad\quad
\young(134,25) \qquad\quad
\young(135,24)
\]
\end{example}

\begin{definition}
For each partition $\lambda \vdash n$, the group $S_n$ acts on the tableaux of shape $\lambda$ 
by permuting the numbers in the boxes.
For $p \in S_n$ and tableau $T$, the result will be denoted $pT$:
that is, if $T(i,j) = x$ then $(pT)(i,j) = p  x$.
\end{definition}


\subsection{Horizontal and vertical permutations}

Each tableau of shape $\lambda \vdash n$ determines certain subgroups of $S_n$ which play an essential role 
in the theory.

\begin{definition}
Given a tableau $T$ of shape $\lambda = (n_1,\dots,n_k) \vdash n$, we write $G_H(T)$ for the subgroup of $S_n$
consisting of all \textbf{horizontal permutations} for $T$.
These are the permutations $h \in S_n$ which leave the rows fixed as sets:
for all $i = 1, \dots, k$, if $x \in T(i,-)$ then $h  x \in T(i,-)$.
Similarly, the subgroup $G_V(T)$ of \textbf{vertical permutations} of $T$ consists of all permutations $v \in S_n$ 
which leave the columns fixed as sets: for all $j = 1, \dots, n_1$, if $x \in T(-,j)$ then $v  x \in T(-,j)$.
\end{definition}

\begin{remark}
If we regard the rows $T(i,-)$ and columns $T(-,j)$ as sets, then $G_H(T)$ and $G_V(T)$ can be defined as direct products:
  \[
  G_H(T) = \prod_{i=1}^k S_{T(i,-)},
  \qquad
  G_V(T) = \prod_{j=1}^{n_1} S_{T(-,j)},
  \]
where $S_X$ denotes the group of all permutations of the set $X$.
\end{remark}

\begin{lemma}
If $T$ is a tableau of shape $\lambda \vdash n$ then $G_H(T) \cap G_V(T) = \{ \iota \}$ where 
$\iota \in S_n$ is the identity permutation.
It follows that if $h, h' \in G_H(T)$ and $v, v' \in G_V(T)$ with $hv = h'v'$ then $h = h'$ and $v = v'$.
\end{lemma}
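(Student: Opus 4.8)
The plan is to prove the two assertions in sequence, since the second follows formally from the first. For the first claim, $G_H(T) \cap G_V(T) = \{\iota\}$, I would take an arbitrary permutation $\pi \in G_H(T) \cap G_V(T)$ and show it must fix every entry of $T$. The key observation is that if $\pi$ lies in $G_H(T)$ then $\pi$ maps each row of $T$ to itself as a set, and if $\pi$ lies in $G_V(T)$ then $\pi$ maps each column to itself as a set; hence for any box at position $(i,j)$, the element $x = T(i,j)$ satisfies $\pi x \in T(i,-)$ and $\pi x \in T(-,j)$. But a row and a column of a Young tableau meet in exactly one box, so $T(i,-) \cap T(-,j) = \{x\}$, forcing $\pi x = x$. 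Since this holds for every box, $\pi = \iota$.

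For the second claim, I would argue as follows: suppose $hv = h'v'$ with $h, h' \in G_H(T)$ and $v, v' \in G_V(T)$. Rearranging gives $(h')^{-1} h = v' v^{-1}$. Since $G_H(T)$ and $G_V(T)$ are subgroups of $S_n$, the left side lies in $G_H(T)$ and the right side lies in $G_V(T)$, so this common element lies in the intersection $G_H(T) \cap G_V(T) = \{\iota\}$. Therefore $(h')^{-1} h = \iota$ and $v' v^{-1} = \iota$, which gives $h = h'$ and $v = v'$.

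The argument is entirely routine; there is no real obstacle. The one point deserving a word of care is the claim that a row and a column of a tableau intersect in exactly one element — this is immediate from the fact that the underlying Young diagram has at most one box in position $(i,j)$ for each pair $(i,j)$, and the entries of $T$ are a bijective labelling, so $T(i,-) \cap T(-,j)$ is either empty (if there is no box at $(i,j)$, which does not occur when we start from an actual box of the diagram) or the singleton $\{T(i,j)\}$. I would state this explicitly but not belabor it. It may also be worth noting, for the second part, that one could alternatively invoke the general fact that if $A \cap B = \{\iota\}$ for subgroups $A, B$ of a group, then the product map $A \times B \to AB$ is injective; but since the paper is aiming for a self-contained elementary exposition, the direct computation above is preferable.
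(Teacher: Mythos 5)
Your proof is correct and follows essentially the same route as the paper for the second assertion (rearranging $hv = h'v'$ to $(h')^{-1}h = v'v^{-1}$ and using the triviality of the intersection). The only difference is that the paper's proof is a one-liner that treats the first claim, $G_H(T) \cap G_V(T) = \{\iota\}$, as self-evident and proves only the second; you spell out the row-meets-column-in-one-box argument for the first claim explicitly, which is a reasonable thing to include and is exactly the right reason.
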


\begin{proof}
If $hv = h'v'$ then $(h')^{-1} h = v' v^{-1}$ and so both equal $\iota$.
\end{proof}

\begin{figure}[h]
\begin{tikzpicture}
  \draw
  (0,0) -- (3,0) -- (3,-0.5) -- (2.5,-0.5) -- (2.5,-1) --
  (2,-1) -- (2,-1.5) -- (1.5,-1.5) -- (1.5,-2) -- (1,-2) --
  (1,-2.5) -- (0.5,-2.5) -- (0.5,-3) -- (0,-3) -- (0,0) ;
  \node[draw=none,fill=none] (x2) at ( 1.75,-0.75) {$x$};
  \node[draw=none,fill=none] (x1) at ( 0.75,-1.75) {$x$};
  \node[draw=none,fill=none] at (-0.25,-0.75) {$k$};
  \node[draw=none,fill=none] at (-0.25,-1.75) {$i$};
  \node[draw=none,fill=none] at ( 0.75, 0.25) {$j$};
  \node[draw=none,fill=none] at ( 1.75, 0.25) {$\ell$};
  \node[draw=none,fill=none] at ( 2.75, -2.75) {\fbox{$T$}};
  \draw (8,0) -- (11,0) -- (11,-0.5) -- (10.5,-0.5) -- (10.5,-1) --
        (10,-1) -- (10,-1.5) -- (9.5,-1.5) -- (9.5,-2) -- (9,-2) --
        (9,-2.5) -- (8.5,-2.5) -- (8.5,-3) -- (8,-3) -- (8,0) ;
  \node[draw=none,fill=none] (px2) at ( 9.75,-0.75) {$px$};
  \node[draw=none,fill=none] (px1) at ( 8.75,-1.75) {$px$};
  \node[draw=none,fill=none] at ( 7.75,-0.75) {$k$};
  \node[draw=none,fill=none] at ( 7.75,-1.75) {$i$};
  \node[draw=none,fill=none] at ( 8.75, 0.25) {$j$};
  \node[draw=none,fill=none] at ( 9.75, 0.25) {$\ell$};
  \node[draw=none,fill=none] at ( 10.75, -2.75) {\fbox{$pT$}};
  \node[draw=none,fill=none] (1) at ( 3.5,-0.75) {\,};
  \node[draw=none,fill=none] (2) at ( 6.75,-0.75) {\,};
  \node[draw=none,fill=none] (3) at ( 3.5,-1.75) {\,};
  \node[draw=none,fill=none] (4) at ( 6.75,-1.75) {\,};
  \path[every node/.style={font=\footnotesize},->]
    (1) edge [bend left] node [above] {$p$} (2);
  \path[every node/.style={font=\footnotesize},->]
    (4) edge [bend left] node [below] {$p^{-1}$} (3);
  \path[every node/.style={font=\footnotesize},->]
    (x1) edge [bend left] node [above] {$q$\;\;\;} (x2);
  \path[every node/.style={font=\footnotesize},->]
    (px1) edge [bend left] node [above] {$pqp^{-1}\;\;\;$} (px2);
\end{tikzpicture}
\caption{Tableaux for the proof of Lemma \ref{lemmaconjugate}}
\label{conjugation}
\end{figure}

\begin{lemma} \label{lemmaconjugate}
Assume that $T$ is a tableau of shape $\lambda \vdash n$ and $p \in S_n$.

(a)
If $h \in G_H(T)$ then $php^{-1} \in G_H(pT)$.
Since conjugation by $p$ is invertible, it is a bijection from $G_H(T)$ to $G_H(pT)$.

(b)
If $v \in G_V(T)$ then $pvp^{-1} \in G_V(pT)$.
Since conjugation by $p$ is invertible, it is a bijection from $G_V(T)$ to $G_V(pT)$.
\end{lemma}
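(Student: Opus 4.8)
The plan is to unwind the definitions of $G_H$ and $G_V$ and check the set-stabilizing condition directly. For part (a), let $h \in G_H(T)$ and fix a row index $i$. I need to show $php^{-1}$ sends the set $(pT)(i,-)$ to itself. The key observation is the bookkeeping identity relating the entries of $T$ and $pT$: by the definition of the action, $(pT)(i,j) = p\,T(i,j)$, so as sets $(pT)(i,-) = \{\, p y \mid y \in T(i,-) \,\}$, i.e. $(pT)(i,-) = p\bigl(T(i,-)\bigr)$. Thus an arbitrary element of $(pT)(i,-)$ has the form $px$ with $x \in T(i,-)$. Applying $php^{-1}$ gives $(php^{-1})(px) = p(hx)$, and since $h \in G_H(T)$ we have $hx \in T(i,-)$, hence $p(hx) \in p\bigl(T(i,-)\bigr) = (pT)(i,-)$. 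This shows $php^{-1}$ stabilizes each row of $pT$ as a set, so $php^{-1} \in G_H(pT)$.

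For the bijectivity claim in (a), I would simply note that conjugation by $p$ is an automorphism of $S_n$ with inverse conjugation by $p^{-1}$, and that applying the paragraph above with $T$ replaced by $pT$ and $p$ replaced by $p^{-1}$ shows conjugation by $p^{-1}$ maps $G_H(pT)$ into $G_H(p^{-1}(pT)) = G_H(T)$; since the two conjugation maps are mutually inverse, each is a bijection between $G_H(T)$ and $G_H(pT)$. Part (b) is verbatim the same argument with ``row'' replaced by ``column'' everywhere: $(pT)(-,j) = p\bigl(T(-,j)\bigr)$, and for $v \in G_V(T)$ and $x \in T(-,j)$ one computes $(pvp^{-1})(px) = p(vx) \in p\bigl(T(-,j)\bigr) = (pT)(-,j)$.

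Honestly there is no real obstacle here; the only thing to be careful about is the direction of the conjugation, i.e. making sure that it is $php^{-1}$ (not $p^{-1}hp$) that lands in $G_H(pT)$, which is exactly what the figure is designed to illustrate: starting from a box of $pT$ containing $px$, apply $p^{-1}$ to move to the corresponding box of $T$ containing $x$, apply $h$ to move within the rows of $T$, then apply $p$ to return to $pT$. I would present the computation $(php^{-1})(px) = p(hx)$ explicitly as the heart of the proof, reference Figure \ref{conjugation} for intuition, and dispatch the bijectivity by the inverse-map remark rather than re-proving surjectivity.
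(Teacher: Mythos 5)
Your proof is correct and takes essentially the same route as the paper, which uses the figure to track where a general permutation $q$ sends a number $x$ and then observes that $pqp^{-1}$ sends $px$ between the same positions of $pT$; your set identity $(pT)(i,-) = p\bigl(T(i,-)\bigr)$ and the computation $(php^{-1})(px) = p(hx)$ are exactly the algebraic content that the figure illustrates geometrically. The only presentational difference is that the paper reads the argument off the commuting diagram while you write it out as a direct element-chase, and the paper leaves the bijectivity as an immediate remark rather than spelling out the inverse map as you do; both of these are matters of exposition rather than substance.
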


\begin{proof}
We refer to Figure \ref{conjugation}.
Suppose the permutation $q \in S_n$ moves the number $x$ from position $(i,j)$ of the tableau $T$ to position $(k,\ell)$;
this is represented by the arrow labelled $q$ in the left tableau.
Following the lower curved arrow labelled $p^{-1}$, then the arrow in the left tableau labelled $q$,
and finally the upper curved arrow labelled $p$, we see that the permutation $pqp^{-1}$ moves $x' = px$ from 
position $(i,j)$ of the tableau $pT$ to position $(k,\ell)$.
This is represented by the arrow labelled $pqp^{-1}$ in the right tableau.
In particular, if $q = h \in G_H(T)$ then $i = k$, and so $php^{-1}$ is a horizontal permutation for $pT$.
Similarly, if $q = v \in G_V(T)$ then $j = \ell$, and so $pvp^{-1}$ is a vertical permutation for $pT$.
\end{proof}

\begin{remark} \label{remarkconjugate}
The notation $hv T$ indicates that we apply the vertical permutation $v \in G_V(T)$ to $T$ and then apply the horizontal
permutation $h \in G_H(T)$ to $vT$.
However, $h$ may not be a horizontal permutation for $vT$.
We can rewrite this using permutations which are horizontal or vertical for the tableaux on which they act:
we have $hv T = (hvh^{-1}) hT$ and $hvh^{-1}$ is a vertical permutation for $hT$.
\end{remark}

\subsection{Row and column intersections}

The next few results investigate the intersection $T(i,-) \cap T'(-,j)$ for tableaux $T$ and $T'$ of shapes 
$\lambda$ and $\mu$.

\begin{proposition} \label{propintersection}
Assume that $\lambda, \mu \vdash n$ with $Y^\lambda \succ Y^\mu$.
For any tableaux $T^\lambda, T^\mu$ there exist $i, j$ for which $T^\lambda(i,-) \cap T^\mu(-,j)$ 
contains at least two numbers.
Thus there exist two numbers in one row of $T^\lambda$ which appear in one column of $T^\mu$.
\end{proposition}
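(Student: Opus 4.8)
The plan is to argue by contradiction. Suppose that for every pair of indices $i,j$ the intersection $T^\lambda(i,-) \cap T^\mu(-,j)$ contains at most one number; the goal is then to derive a numerical contradiction with $Y^\lambda \succ Y^\mu$. Write $\lambda = (n_1, \dots, n_k)$ and $\mu = (m_1, \dots, m_\ell)$, extending both sequences by zeros whenever convenient.

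First I would read off a single inequality from the hypothesis. By the definition of $\succ$ there is an index $r \ge 1$ with $n_1 = m_1, \dots, n_{r-1} = m_{r-1}$ and $n_r > m_r$; summing the equalities and adding the strict inequality gives $\sum_{j=1}^{r} n_j > \sum_{j=1}^{r} m_j$. Next comes the counting step, which is the heart of the argument. Let $A$ be the set of numbers occupying rows $1, \dots, r$ of $T^\lambda$, so that $|A| = \sum_{j=1}^{r} n_j$, and observe that each element of $A$ lies in exactly one column of $T^\mu$. Fix a column index $c$ and let $A_c \subseteq A$ consist of the elements lying in column $c$ of $T^\mu$. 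On one hand $A_c \subseteq T^\mu(-,c)$, so $|A_c| \le |T^\mu(-,c)|$; on the other hand, by the contradiction hypothesis each of the $r$ rows $T^\lambda(1,-), \dots, T^\lambda(r,-)$ contributes at most one element to $A_c$, so $|A_c| \le r$. Hence $|A_c| \le \min\{ r, |T^\mu(-,c)| \}$ for every $c$.

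Finally I would sum over all columns of $T^\mu$. Since the sets $A_c$ partition $A$, we get $\sum_{j=1}^{r} n_j = \sum_c |A_c| \le \sum_c \min\{ r, |T^\mu(-,c)| \}$. But $\sum_c \min\{ r, |T^\mu(-,c)| \}$ counts exactly the boxes of $Y^\mu$ lying in rows $1, \dots, r$, which equals $\sum_{j=1}^{r} m_j$. This contradicts the inequality of the previous paragraph, and the last sentence of the statement is just a verbal restatement of the conclusion. The only points requiring a little care are the box-counting identity $\sum_c \min\{ r, |T^\mu(-,c)| \} = \sum_{j=1}^{r} m_j$ and the bookkeeping in the counting step; conceptually the proof is the standard fact that a column-injective placement of one Young shape into another forces a dominance inequality between the two shapes, and here $Y^\lambda \succ Y^\mu$ makes the relevant dominance inequality fail.
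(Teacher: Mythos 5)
Your proof is correct, and it takes a genuinely different route from the paper's. The paper peels off rows one at a time: assuming each $T^\lambda(i,-)\cap T^\mu(-,j)$ has at most one element, it argues that $n_1 \le n'_1$, uses $\succ$ to get $n_1 = n'_1$, applies a vertical permutation to $T^\mu$ so the first rows agree as sets, deletes those rows, and recurses; this forces $Y^\lambda = Y^\mu$, a contradiction. You instead run a single global double-count: pick the first index $r$ where the lex order gives $n_r > m_r$ (so $\sum_{j\le r} n_j > \sum_{j\le r} m_j$), let $A$ be the entries in the top $r$ rows of $T^\lambda$, partition $A$ by which column of $T^\mu$ each entry sits in, bound each part by both the column length of $T^\mu$ and, via the contradiction hypothesis, by $r$, and observe that $\sum_c \min\{r,|T^\mu(-,c)|\}$ is precisely the number of boxes of $Y^\mu$ in its top $r$ rows, namely $\sum_{j\le r} m_j$. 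This is airtight and cleaner: there is no need to manipulate $T^\mu$ with vertical permutations or to track what happens after deleting rows, and the argument in fact only uses the weaker hypothesis that some partial sum of $\lambda$ strictly exceeds the corresponding partial sum of $\mu$ — i.e., that $\lambda$ is not dominated by $\mu$ in the dominance order. The paper's recursive approach is more elementary in feel but requires the reader to check that the hypotheses (lex order, the contradiction assumption) descend correctly to the truncated tableaux; your counting argument trades that bookkeeping for the single box-counting identity $\sum_c \min\{r,|T^\mu(-,c)|\} = \sum_{j\le r} m_j$, which you correctly identify and justify.
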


\begin{proof}
Write $\lambda = (n_1,\dots,n_k)$ and $\mu = (n'_1,\dots,n'_\ell)$.
We make the contrary assumption that $T^\lambda(i,-) \cap T^\mu(-,j)$ contains at most one number for all
$1 \le i \le k$ and $1 \le j \le n'_1$.
In particular, for $i = 1$ we see that the $n_1$ numbers in $T^\lambda(1,-)$ belong to different columns of $T^\mu$,
and so $n_1 \le n'_1$.
But $Y^\lambda \succ Y^\mu$ implies $n_1 \ge n'_1$, and so $n_1 = n'_1$.
The contrary assumption is not affected if we apply a vertical permutation to $T^\mu$,
and so there exists $v \in G_V(T^\mu)$ for which $T^\lambda(1,-) = (vT^\mu)(1,-)$ as sets;
these rows contain the same numbers, possibly in different order.

We now delete the first rows of $T^\lambda$ and $vT^\mu$, obtaining tableaux $T^{\lambda'} \succ T^{\mu'}$
where $\lambda', \mu'$ are partitions of $n-n_1$.
Both tableaux contain the numbers $\{ a_1, \dots, a_{n-n_1} \} \subset \{ 1, \dots, n \}$
which we can identify with $\{1 , \dots, n-n_1 \}$.
Repeating the argument of the first paragraph, we see that $n_2 = n'_2, \dots, n_k = n'_\ell$;
at the end we must have $k = \ell$.
This implies that $Y^\lambda = Y^\mu$, which is a contradiction.
\end{proof}

\begin{lemma} \label{lemmaiff}
Let $T$ be a tableau of shape $\lambda = (n_1,\dots,n_k) \vdash n$.
A permutation $p \in S_n$ has the form $p = hv$ for $h \in G_H(T)$ and $v \in G_V(T)$
if and only if $T(i,-) \cap (pT)(-,j)$ contains at most one number for all $i = 1, \dots, k$ and $j = 1, \dots, n_1$.
\end{lemma}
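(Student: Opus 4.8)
The plan is to prove both directions separately. For the easy direction, suppose $p = hv$ with $h \in G_H(T)$ and $v \in G_V(T)$. I want to show $T(i,-) \cap (pT)(-,j)$ has at most one element for all $i,j$. Using Remark~\ref{remarkconjugate}, I can write $pT = hvT = (hvh^{-1})hT$, where $hvh^{-1}$ is a vertical permutation for $hT$ by Lemma~\ref{lemmaconjugate}(b). Since $h \in G_H(T)$ permutes each row of $T$ as a set, $hT$ has the same rows as $T$; in particular $(hT)(i,-) = T(i,-)$ as sets. Applying a vertical permutation to $hT$ permutes within columns, so $(pT)(-,j)$ consists of one number from each row of $hT$ — hence one number from each row $T(i,-)$. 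Therefore $T(i,-) \cap (pT)(-,j)$ contains exactly one number (or none, if row $i$ is too short to meet column $j$), which gives the claim.

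For the converse, suppose $T(i,-) \cap (pT)(-,j)$ contains at most one number for all $i,j$. I want to factor $p = hv$. The idea is to apply Proposition~\ref{propintersection} and Lemma~\ref{lemmaiff}'s hypothesis in the contrapositive-adjacent form: set $\mu = \lambda$ and compare $T$ with $pT$, which have the same shape. The hypothesis says no two numbers in a row of $T$ lie in a common column of $pT$. I would argue, row by row as in the proof of Proposition~\ref{propintersection}, that there is a vertical permutation $v' \in G_V(pT)$ matching up the rows: since the $n_1$ numbers of $T(1,-)$ lie in distinct columns of $pT$, some vertical permutation of $pT$ brings them all into the first row; repeating on the deleted tableaux handles rows $2, \dots, k$. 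Composing these, I get $v' \in G_V(pT)$ with $v' p T$ having the same rows as $T$ (each row as a set). That means $h' := v'p$ satisfies: $h' x$ lies in the same row of $T$ as $x$ for every $x$, i.e.\ $h' \in G_H(T)$. Hence $p = (v')^{-1} h'$. To get the stated form $p = hv$ rather than $v^{-1}h$, I then convert: by Lemma~\ref{lemmaconjugate}, $(v')^{-1} \in G_V(pT)$, and I can push it through $h'$ — writing $p = h' \cdot (h'^{-1}(v')^{-1}h')$ and noting that $h'^{-1}(v')^{-1}h'$ is... well, here I need to be careful about which tableau the vertical permutation is attached to. Alternatively, and more cleanly, I would instead run the row-matching argument directly on $T$ using a vertical permutation $v \in G_V(T)$ applied to $pT$ from the appropriate side, arranging $p = hv$ with $h \in G_H(T)$, $v \in G_V(T)$ from the start.

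The main obstacle will be the bookkeeping in the converse: correctly tracking whether the vertical permutations produced by the row-by-row matching belong to $G_V(T)$ or to $G_V(pT)$, and landing on exactly the factorization order $p = hv$ demanded by the statement (rather than $p = vh$ or $p = v^{-1}h$). Lemma~\ref{lemmaconjugate} and Remark~\ref{remarkconjugate} are precisely the tools for converting between these, so the argument is not deep, but it requires care. A clean way to organize it: show first that the hypothesis is equivalent to the existence of $v \in G_V(T)$ with $(pT)$ and $(vT)$ sharing the same rows as sets — equivalently $p v^{-1}$ fixes each row of $T$ setwise — and then read off $h = pv^{-1} \in G_H(T)$, so $p = hv$. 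The existence of such a $v$ is exactly the greedy/inductive construction modeled on the proof of Proposition~\ref{propintersection}, applied with $\lambda = \mu$.
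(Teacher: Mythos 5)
The forward direction of your argument is correct and essentially matches the paper's. The trouble is in the converse, and specifically in the ``clean way'' you propose at the end. The claimed equivalence --- that the intersection hypothesis holds if and only if there is some $v \in G_V(T)$ with $pT$ and $vT$ sharing the same rows as sets --- is false. Take $T$ of shape $(2,1)$ with $T(1,1)=1$, $T(1,2)=2$, $T(2,1)=3$, and $p = (1\,3\,2) = (1\,2)(1\,3) = hv$ with $h=(1\,2)\in G_H(T)$, $v=(1\,3)\in G_V(T)$. Then $p$ satisfies the intersection hypothesis (being of the form $hv$), but $pT$ has rows $\{1,3\},\{2\}$ while the only two tableaux $vT$ for $v\in G_V(T)=\{\iota,(1\,3)\}$ have rows $\{1,2\},\{3\}$ and $\{2,3\},\{1\}$ --- neither matches. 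What ``$pT$ and $vT$ share rows'' actually encodes is $v^{-1}p\in G_H(T)$, i.e.\ $p=vh$ (the \emph{wrong} factorization order), not $pv^{-1}\in G_H(T)$; you have silently interchanged $pv^{-1}$ and $v^{-1}p$, and these characterize different sets of permutations, as the example shows. So the ``clean way'' does not work.

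Your first sketch --- greedily build $v'\in G_V(pT)$ row by row so that $v'pT$ and $T$ have the same rows, then set $h'=v'p\in G_H(T)$ --- is sound as far as it goes, and you correctly flag the bookkeeping at $p=(v')^{-1}h'$ as the remaining obstacle, but you do not finish it. The missing step is to conjugate $(v')^{-1}$ by $p^{-1}$ (not by $h'$): Lemma~\ref{lemmaconjugate} gives $w := p^{-1}(v')^{-1}p \in G_V(p^{-1}pT)=G_V(T)$, and then $p=(v')^{-1}h'=pwp^{-1}h'$ forces $h'=pw^{-1}$, i.e.\ $p=h'w$ with $h'\in G_H(T)$, $w\in G_V(T)$, as required. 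The paper's own proof avoids this conjugation entirely by a column-matching dual of your construction: it builds $h\in G_H(T)$ column by column so that $hT$ and $pT$ have the same columns as sets, obtains $v'\in G_V(hT)$ with $v'hT=pT$, and then Lemma~\ref{lemmaconjugate} gives $v'=hvh^{-1}$ for $v\in G_V(T)$, so $p=hv$ falls out in the correct order with no extra manipulation. The moral: to land on $p=hv$, you should match columns using horizontal permutations of $T$, not match rows using vertical permutations of $pT$.
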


\begin{proof}
Assume that $p = hv$ for some $h \in G_H(T)$ and $v \in G_V(T)$.
Following Remark \ref{remarkconjugate}, we have $pT = hvT = (hvh^{-1}) hT$ where $hvh^{-1} \in G_V( hT )$.
If $x, y$ are distinct numbers in the same row of $T$, then they are in the same row but different columns of $hT$;
hence they are in different columns of $(hvh^{-1}) hT = pT$.

Conversely, assume that
$T(i,-) \cap (pT)(-,j)$ contains at most one number for all $i = 1, \dots, k$ and $j = 1, \dots, n_1$.
Then the numbers in the first column of $pT$ must appear in different rows of $T$.
We can apply a horizontal permutation $h_1 \in G_H(T)$ so that $(h_1 T)(-,1)$ is a permutation of $(pT)(-,1)$.
Similarly, the numbers in the second column of $pT$ must appear in different rows of $h_1 T$ and columns $j \ge 2$.
Keeping the numbers in $(h_1 T)(-,1)$ fixed, we can apply $h_2 \in G_H(T)$
so that $(h_2 h_1 T)(-,2)$ is a permutation of $(pT)(-,2)$.
Continuing, we obtain permutations $h_1, h_2, \dots, h_{n_1} \in G_H(T)$ so that
every number in $h T$ (where $h = h_{n_1} \cdots h_1$) is in the same column as in $pT$.
We now apply a vertical permutation $v' \in G_V( hT )$ to obtain $v' h T = p T$.
By Lemma \ref{lemmaconjugate}, we have $v' = hvh^{-1}$ for some $v \in G_V(T)$.
Therefore $pT = v'hT = hvh^{-1}hT = hvT$, as required.
\end{proof}

\begin{proposition}\label{proprowcolumn}
Assume that $\lambda = (n_1,\dots,n_k) \vdash n$, and let $T_1, \dots, T_{d_\lambda}$
be the standard tableaux of shape $\lambda$ in lex order.
If $r > s$ then there exist $i \in \{1,\dots,k\}$ and $j \in \{1,\dots,n_1\}$ such that $T_r(-,j) \cap T_s(i,-)$
contains at least two elements.
\end{proposition}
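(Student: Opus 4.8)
The plan is to prove the contrapositive: assuming that $T_r(-,j)\cap T_s(i,-)$ contains at most one number for all $i,j$, I will show $r\le s$, contradicting $r>s$. (Equivalently, by Lemma \ref{lemmaiff}, the assumption says that the permutation $p$ with $pT_s=T_r$ factors as $p=hv$ with $h\in G_H(T_s)$, $v\in G_V(T_s)$; the point below is that standardness of $T_s$ and $T_r$ then severely restricts $p$.)

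The first step is to spread the hypothesis over all ``prefixes''. For $0\le m\le n$ let $T_s^{[m]}$ and $T_r^{[m]}$ be the subtableaux formed by the cells of $T_s$ and of $T_r$ that carry the entries $1,\dots,m$; since $T_s,T_r$ are standard, each of these cell sets is a Young diagram, so $T_s^{[m]}$ and $T_r^{[m]}$ are standard tableaux of shapes $\sigma^{(m)}\vdash m$ and $\rho^{(m)}\vdash m$, with $\sigma^{(0)}=\rho^{(0)}=\varnothing$ and $\sigma^{(n)}=\rho^{(n)}=\lambda$. Because every row of $T_s^{[m]}$ lies inside a row of $T_s$ and every column of $T_r^{[m]}$ lies inside a column of $T_r$, we have $T_s^{[m]}(i,-)\cap T_r^{[m]}(-,j)\subseteq T_s(i,-)\cap T_r(-,j)$, so $T_s^{[m]}(i,-)\cap T_r^{[m]}(-,j)$ has at most one element for all $i,j$. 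Hence, by the contrapositive of Proposition \ref{propintersection} (applied with the tableaux $T_s^{[m]}$ and $T_r^{[m]}$), we cannot have $Y^{\sigma^{(m)}}\succ Y^{\rho^{(m)}}$; that is,
  \[
  \sigma^{(m)}\preceq\rho^{(m)}\qquad\text{for every }m=0,1,\dots,n .
  \]

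The second step is to convert this chain of shape inequalities into the lexicographical inequality $T_r\preceq T_s$. Assume $T_r\ne T_s$ (in fact this cannot happen, the $T_i$ being distinct) and let $m_0$ be the least $m$ with $\sigma^{(m)}\ne\rho^{(m)}$. Then $\sigma^{(m_0-1)}=\rho^{(m_0-1)}=:\nu$, the entries $1,\dots,m_0-1$ occupy the same cells in $T_s$ and in $T_r$, and $m_0$ occupies an addable corner of $\nu$ in each — say in row $a$ of $T_s$ and row $b$ of $T_r$. Adjoining a box to $\nu$ in an earlier row yields a strictly larger diagram in the order $\prec$, so $\sigma^{(m_0)}\preceq\rho^{(m_0)}$ together with $\sigma^{(m_0)}\ne\rho^{(m_0)}$ forces $a>b$: the entry $m_0$ is placed in a strictly earlier row of $T_r$ than of $T_s$. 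From this I would conclude, by induction on $n$, that the first cell (in row order) at which $T_r$ and $T_s$ disagree carries the smaller value in $T_r$, whence $T_r\prec T_s$ and $r<s$: when the largest entry $n$ occupies the same cell of $T_r$ and $T_s$, delete it and apply the inductive hypothesis to the common smaller shape (the chain restricts verbatim); when it does not, use the chain at level $m=n-1$ to pin down the relative rows of $n$ in the two tableaux and thereby locate the first discrepancy.

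The step I expect to be the main obstacle is precisely this last translation — deducing ``$T_r$ precedes $T_s$ lexicographically'' from ``$\sigma^{(m)}\preceq\rho^{(m)}$ for all $m$''. It is immediate when $m_0$ lands in the topmost addable corner of $\nu$ in $T_r$, for then that cell is already the first point of disagreement and $T_r$ has $m_0$ there against a larger entry of $T_s$. The difficulty is the case in which $m_0$ is placed lower: one must then rule out, using the full chain rather than the single inequality at level $m_0$, the possibility that some cell occupied by a large entry of $T_r$ becomes the first discrepancy. Careful bookkeeping of which cell of $\lambda$ is reached first in row order, organized as an induction on $n$ by peeling off the largest entry, is what makes this go through.
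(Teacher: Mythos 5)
Your approach is genuinely different from the paper's and is heavier machinery for the same conclusion. The paper gives a short direct construction: at the first row-reading position $(i',j')$ where $T_r$ and $T_s$ differ, with $x = T_r(i',j') > y = T_s(i',j')$, standardness forces $y$ to sit at some $(i'',j'')$ in $T_r$ with $i'' > i'$ and $j'' < j'$; the common entry $z = T_r(i',j'') = T_s(i',j'')$ then gives the pair $\{y,z\}$ lying in column $j''$ of $T_r$ and row $i'$ of $T_s$. You instead pass to the contrapositive, build the chain $\sigma^{(m)} \preceq \rho^{(m)}$ of prefix shapes via Proposition~\ref{propintersection}, and then try to convert this into the lex inequality $T_r \preceq T_s$. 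Your first step is sound: the prefix rows and columns sit inside the full ones, so the ``at most one'' hypothesis descends, and since $\prec$ on partitions is a total order, ``not $\succ$'' does give ``$\preceq$''.

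The gap you anticipate in the second step is real, and the induction you sketch does not close it: when the largest entry $n$ occupies \emph{different} cells of $T_r$ and $T_s$, deleting it produces standard tableaux of different shapes $\rho^{(n-1)} \neq \sigma^{(n-1)}$, so there is no inductive hypothesis to invoke, and knowing the row of $n$ alone does not locate the first discrepancy (which may occur much earlier in row-reading order). The clean way to finish is direct and needs only one value of $m$: suppose for contradiction $T_s \prec T_r$, let $(i^*,j^*)$ be the first row-reading disagreement, and set $y = T_s(i^*,j^*) < T_r(i^*,j^*)$. Since $T_r$ and $T_s$ agree on every cell in rows $1,\dots,i^*-1$, we get $\sigma^{(y)}_i = \rho^{(y)}_i$ for $i < i^*$; and since row entries increase, $\sigma^{(y)}_{i^*} = j^*$ whereas $\rho^{(y)}_{i^*} = j^*-1$. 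Thus $\sigma^{(y)} \succ \rho^{(y)}$, contradicting the chain. Supplying this completes your argument, but notice it quietly re-derives the paper's first-disagreement analysis inside the proof of the second step, so the detour through Proposition~\ref{propintersection} does not actually shorten or simplify anything; the paper's one-paragraph construction is the more economical route.
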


\begin{proof}
Let $(i',j')$ be the first position in which $T_r$ and $T_s$ have a different number.
Let $x, y$ be the numbers in position $(i',j')$ in $T_r, T_s$ respectively.
Since $r > s$ we have $x > y$.
In a standard tableau, each number in the first column is the least number that has not appeared in previous rows.
Hence $j' \ge 2$.
Suppose that $y$ occurs in position $(i'',j'')$ in $T_r$.
Since $T_r$ and $T_s$ are equal up to position $(i',j')$, we have two cases:
either $i'' = i'$ and $j'' > j'$ ($y$ is in the same row as $x$ but to the right),
or $i'' > i'$ ($y$ is in a lower row than $x$).
Since $x > y$ and $T_r$ is standard, the first case is impossible.
In the second case, $x > y$ implies $j'' < j'$ ($y$ must be in a column to the left of $x$).
We illustrate this situation with the diagram of Figure \ref{figure2}.
Since position $(i',j'')$ occurs before $(i',j')$, the number $z$ in this position must be the same
in both $T_r$ and $T_s$.
Hence $y, z$ are the two numbers in the same column of $T_r$ and the same row of $T_s$.
\end{proof}

\begin{figure}
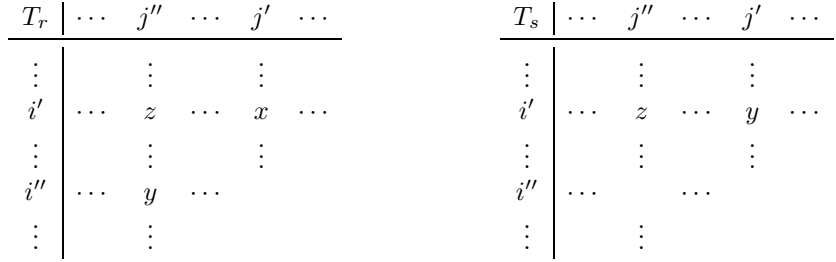

  \[
  \begin{array}{c|ccccc}
  T_r & \cdots & j'' & \cdots & j' & \cdots \\ \midrule
  \vdots & & \vdots & & \vdots & \\
  i' & \cdots & z & \cdots & x & \cdots \\
  \vdots & & \vdots & & \vdots \\
  i'' & \cdots & y & \cdots & & \\
  \vdots & & \vdots & & \\
  \end{array}
  \qquad\qquad\qquad
  \begin{array}{c|ccccc}
  T_s & \cdots & j'' & \cdots & j' & \cdots \\ \midrule
  \vdots & & \vdots & & \vdots & \\
  i' & \cdots & z & \cdots & y & \cdots \\
  \vdots & & \vdots & & \vdots \\
  i'' & \cdots & & \cdots & & \\
  \vdots & & \vdots \\
  \end{array}
  \]
\caption{Diagram for the proof of Proposition \ref{proprowcolumn}}
\label{figure2}
\end{figure}


\subsection{Symmetric and alternating sums}

We construct special elements of $\mathbb{F} S_n$ which will be used to define idempotents in the group algebra.

\begin{definition}
Given a tableau $T$ of shape $\lambda \vdash n$ we define the following elements of $\mathbb{F} S_n$, 
where $\sgn\colon S_n \to \{ \pm 1 \}$ is the sign homomorphism:
  \[
  H_T = \sum_{h \in G_H(T)} h,
  \qquad \qquad
  V_T = \sum_{v \in G_V(T)} \sgn(v)\, v.
  \]
(Classically these were called the ``positive and negative symmetric groups'' for $T$.)
\end{definition}

\begin{lemma} \label{lemmasigns}
If $T$ is a tableau of shape $\lambda \vdash n$, and $h \in G_H(T)$, $v \in G_V(T)$, then
  \[
  h H_T = H_T = H_T h,
  \qquad
  v V_T = \sgn(v) V_T = V_T v.
  \]
\end{lemma}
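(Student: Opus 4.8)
The plan is to exploit the fact that left multiplication by a fixed group element $h$ simply permutes the group elements it is summed over. Concretely, for the first identity I would observe that since $G_H(T)$ is a subgroup of $S_n$ and $h \in G_H(T)$, the map $g \mapsto hg$ is a bijection from $G_H(T)$ to itself. Therefore
\[
h H_T = \sum_{g \in G_H(T)} hg = \sum_{g' \in G_H(T)} g' = H_T,
\]
where we re-indexed by $g' = hg$. The identity $H_T h = H_T$ follows the same way using the bijection $g \mapsto gh$ of $G_H(T)$ onto itself (right multiplication by $h$).

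For the second identity the argument is nearly identical, but one must carry the sign along. Since $G_V(T)$ is a subgroup and $v \in G_V(T)$, the map $g \mapsto vg$ is again a bijection of $G_V(T)$ onto itself, and $\sgn$ is a homomorphism, so $\sgn(vg) = \sgn(v)\sgn(g)$. Hence
\[
v V_T = \sum_{g \in G_V(T)} \sgn(g)\, vg = \sum_{g \in G_V(T)} \sgn(v)^{-1}\sgn(vg)\, vg = \sgn(v) \sum_{g' \in G_V(T)} \sgn(g')\, g' = \sgn(v) V_T,
\]
re-indexing by $g' = vg$ and using $\sgn(v)^{-1} = \sgn(v)$. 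The identity $V_T v = \sgn(v) V_T$ is obtained symmetrically via right multiplication $g \mapsto gv$.

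There is essentially no obstacle here: the only thing to be careful about is making sure the re-indexing bijection is genuinely a bijection of the \emph{same} subgroup onto itself, which is exactly the statement that $G_H(T)$ and $G_V(T)$ are closed under multiplication — i.e.\ that they are subgroups, which was established when they were defined (and made explicit in the remark describing them as direct products $\prod_i S_{T(i,-)}$ and $\prod_j S_{T(-,j)}$). The sign bookkeeping in the second part is the only place an error could creep in, and it is handled cleanly by the homomorphism property of $\sgn$ together with $\sgn(v) \in \{\pm 1\}$ so that $\sgn(v) = \sgn(v)^{-1}$.
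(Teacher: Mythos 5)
Your proof is correct and follows essentially the same route as the paper: re-indexing the sum via the bijection $g \mapsto hg$ (resp.\ $g \mapsto gh$) on $G_H(T)$, and for $V_T$ carrying the sign through using the homomorphism property of $\sgn$ together with $\sgn(v)^{-1} = \sgn(v)$. Nothing to add.
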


\begin{proof}
For a horizontal permutation $h$, the function $G_H(T) \to G_H(T)$ sending $h' \mapsto hh'$ is a bijection,
and similarly for $h' \mapsto h'h$; this proves the claim for $H_T$.
Analogous bijections hold for $G_V(T)$ and a vertical permutation $v$, so
  \begin{align*}
  v V_T
  &=
  \sum_{v' \in G_V(T)} \!\!\!\! \sgn(v')\, vv'
  =
  \sgn(v)^{-1} \!\!\!\! 
  \sum_{v' \in G_V(T)} \!\!\!\! \sgn(v)\sgn(v')\, vv'
  =
  \sgn(v) \!\!\!\! 
  \sum_{v' \in G_V(T)} \!\!\!\! \sgn(vv')\, vv'
  =
  \sgn(v) V_T.
  \end{align*}
The proof that $\sgn(v) V_T = V_T v$ is similar.
\end{proof}

\begin{proposition} \label{propositionconjugate}
If $T$ is a tableau of shape $\lambda \vdash n$, and $p \in S_n$, then
  \[
  H_{pT} = p \, H_T \, p^{-1},
  \qquad
  V_{pT} = p \, V_T \, p^{-1}.
  \]
\end{proposition}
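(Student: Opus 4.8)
The plan is to expand both sides directly and invoke the conjugation bijections of Lemma~\ref{lemmaconjugate}, together with the elementary fact that the sign homomorphism is invariant under conjugation.

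First I would handle $H_{pT}$. Starting from the definition of $H_T$ and distributing, $p\,H_T\,p^{-1} = \sum_{h \in G_H(T)} p h p^{-1}$. By Lemma~\ref{lemmaconjugate}(a) the map $h \mapsto php^{-1}$ sends $G_H(T)$ bijectively onto $G_H(pT)$, so reindexing the finite sum by $h' = php^{-1}$ yields $\sum_{h' \in G_H(pT)} h' = H_{pT}$, as claimed.

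Next I would treat $V_{pT}$ in the same way, the only extra ingredient being the behaviour of the sign. We have $p\,V_T\,p^{-1} = \sum_{v \in G_V(T)} \sgn(v)\, pvp^{-1}$. Since $\sgn\colon S_n \to \{\pm1\}$ is a group homomorphism, $\sgn(pvp^{-1}) = \sgn(p)\sgn(v)\sgn(p)^{-1} = \sgn(v)$, so the coefficient attached to $pvp^{-1}$ equals its own sign. Using Lemma~\ref{lemmaconjugate}(b) to reindex by $v' = pvp^{-1}$, which runs bijectively over $G_V(pT)$, the sum becomes $\sum_{v' \in G_V(pT)} \sgn(v')\, v' = V_{pT}$.

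There is essentially no hard step here: the proposition is a bookkeeping consequence of the conjugation lemma. The only point that deserves an explicit word is the sign identity $\sgn(pvp^{-1}) = \sgn(v)$, which is immediate from the multiplicativity of $\sgn$ (equivalently, from the fact that conjugate permutations share the same cycle type). Everything else amounts to relabelling a finite sum along a bijection.
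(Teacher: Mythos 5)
Your proof is correct and follows essentially the same route as the paper: it rests on the conjugation bijections of Lemma~\ref{lemmaconjugate} together with the observation that the sign is conjugation-invariant, which is exactly what the paper appeals to via $\sgn(p) = \sgn(p^{-1})$. The paper additionally cites Lemma~\ref{lemmasigns}, but as your direct reindexing shows, that citation is not actually needed; your version is, if anything, slightly cleaner.
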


\begin{proof}
This follows from Lemmas \ref{lemmaconjugate} and \ref{lemmasigns} since $\sgn(p) = \sgn(p^{-1})$.
\end{proof}


\subsection{Idempotents and orthogonality in the group algebra}

We construct idempotent elements in $\mathbb{F} S_n$ and study their orthogonality properties.

\begin{definition}
Let $T_1, \dots, T_{n!}$ be the tableaux of shape $\lambda \vdash n$ in lex order.
For $1 \le i, j \le n!$ we define $s_{ij} \in S_n$ by $s_{ij} T_j = T_i$;
clearly $s_{ji} = s_{ij}^{-1}$ and $s_{ij} s_{jk} = s_{ik}$.
We also define these group algebra elements (omitting $\lambda$ if it is understood):
  \[
  D^\lambda_i = H_{T_i} V_{T_i} = \sum_{h \in G_H(T_i)} \sum_{v \in G_V(T_i)} \sgn(v) \, hv.
  \]
\end{definition}

\begin{proposition} \label{propconj}
If $T$ is a tableau of shape $\lambda \vdash n$ then
  \begin{equation}
  \label{d=sds}
  D_j = s_{ji} D_i s_{ij}.
  \end{equation}
Equivalently,
  \begin{equation}
  \label{sd=ds}
  s_{ij} D_j = D_i s_{ij}.
  \end{equation}
\end{proposition}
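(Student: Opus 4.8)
The plan is to prove \eqref{d=sds} directly from the definition $D_i = H_{T_i} V_{T_i}$ together with Proposition~\ref{propositionconjugate}, which describes how the symmetric and alternating sums transform under the $S_n$-action on tableaux. First I would observe that $s_{ji}$ carries $T_i$ to $T_j$, i.e. $s_{ji} T_i = T_j$, since $s_{ji} = s_{ij}^{-1}$ and $s_{ij} T_j = T_i$. Then, applying Proposition~\ref{propositionconjugate} with $p = s_{ji}$ and $T = T_i$, I get $H_{T_j} = H_{s_{ji} T_i} = s_{ji} H_{T_i} s_{ji}^{-1} = s_{ji} H_{T_i} s_{ij}$, and likewise $V_{T_j} = s_{ji} V_{T_i} s_{ij}$.

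Next I would simply multiply these two identities:
\[
D_j = H_{T_j} V_{T_j} = \bigl( s_{ji} H_{T_i} s_{ij} \bigr) \bigl( s_{ji} V_{T_i} s_{ij} \bigr) = s_{ji} H_{T_i} (s_{ij} s_{ji}) V_{T_i} s_{ij} = s_{ji} H_{T_i} V_{T_i} s_{ij} = s_{ji} D_i s_{ij},
\]
using $s_{ij} s_{ji} = s_{ii} = \iota$ (which follows from the relation $s_{ij} s_{jk} = s_{ik}$ with $k = i$, noting $s_{ii} T_i = T_i$ forces $s_{ii} = \iota$). This establishes \eqref{d=sds}. For the equivalence with \eqref{sd=ds}, I would left-multiply \eqref{d=sds} by $s_{ij}$ and use $s_{ij} s_{ji} = \iota$ to obtain $s_{ij} D_j = D_i s_{ij}$; conversely, left-multiplying \eqref{sd=ds} by $s_{ji}$ and right-multiplying by $s_{ij}$ recovers \eqref{d=sds}, so the two equations are equivalent.

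I do not expect any genuine obstacle here: the proof is a two-line manipulation once Proposition~\ref{propositionconjugate} is in hand, and that proposition has already been established. The only point requiring a moment's care is bookkeeping with the indices of the $s_{ij}$ — making sure the conjugating permutation is $s_{ji}$ (sending $T_i \to T_j$) rather than $s_{ij}$, and correctly using the cocycle-type relations $s_{ij} s_{jk} = s_{ik}$ and $s_{ii} = \iota$ to collapse the middle product $s_{ij} s_{ji}$. Everything else is formal.
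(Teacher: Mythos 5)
Your proof is correct and follows essentially the same route as the paper: both rest on Proposition~\ref{propositionconjugate} applied to the conjugating permutation $s_{ji}$, with the only cosmetic difference being that you expand $D_j = H_{T_j}V_{T_j}$ factor by factor while the paper starts from $s_{ji}D_i s_{ij}$ and inserts $s_{ji}^{-1}s_{ji}$ in the middle. The index bookkeeping and the derivation of \eqref{sd=ds} from \eqref{d=sds} are handled exactly as the paper does.
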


\begin{proof}
Using Proposition \ref{propositionconjugate} and the definition of $s_{ij}$, we obtain
\[
s_{ji} D_i s_{ij}
=
s_{ji} H_{T_i} V_{T_i} s_{ji}^{-1}
=
\big[ s_{ji} H_{T_i} s_{ji}^{-1} \big] \! \big[ s_{ji} V_{T_i} s_{ji}^{-1} \big]
=
H_{ s_{ji} T_i} V_{s_{ji} T_i}
=
H_{T_j} V_{T_j}
=
D_j.
\]
This proves the first equation, and the second follows from $s_{ji} = s_{ij}^{-1}$.
\end{proof}

\begin{proposition} \label{prop0product}
If $\lambda, \mu \vdash n$ with $\lambda \ne \mu$, then $D_i^\lambda D_j^\mu = 0$ for all tableaux 
$T_i^\lambda, T_j^\mu$.
\end{proposition}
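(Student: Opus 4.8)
The plan is to show that $D_i^\lambda D_j^\mu = 0$ by reducing, via the conjugation relations already established, to the special case where the two tableaux have a favorable relative position, and then to exploit a transposition that is simultaneously horizontal for one tableau and vertical for the other. Without loss of generality I may assume $Y^\lambda \succ Y^\mu$ (if instead $Y^\mu \succ Y^\lambda$, I would argue on the transpose/opposite side, or simply apply an anti-automorphism of $\mathbb{F}S_n$; since $D_i^\lambda D_j^\mu$ and $D_j^\mu D_i^\lambda$ are related by the standard involution $\sigma \mapsto \sigma^{-1}$ composed with the sign twist, vanishing of one gives vanishing of the other, and a symmetric argument handles the remaining case directly). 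So assume $Y^\lambda \succ Y^\mu$.

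Next I would invoke Proposition \ref{propintersection}: since $Y^\lambda \succ Y^\mu$, for the tableaux $T_i^\lambda$ and $T_j^\mu$ there exist a row index $a$ of $T_i^\lambda$ and a column index $b$ of $T_j^\mu$ such that $T_i^\lambda(a,-) \cap T_j^\mu(-,b)$ contains two distinct numbers $x$ and $y$. Let $\tau = (x\,y)$ be the transposition swapping them. Then $\tau \in G_H(T_i^\lambda)$ because $x,y$ lie in the same row of $T_i^\lambda$, and $\tau \in G_V(T_j^\mu)$ because $x,y$ lie in the same column of $T_j^\mu$; moreover $\sgn(\tau) = -1$.

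Now I apply Lemma \ref{lemmasigns} on both sides. On the left, $D_i^\lambda = H_{T_i^\lambda} V_{T_i^\lambda}$, and since $\tau \in G_H(T_i^\lambda)$ we have $H_{T_i^\lambda}\tau = H_{T_i^\lambda}$, hence $D_i^\lambda \tau = H_{T_i^\lambda} V_{T_i^\lambda}\tau$ — that's not quite the needed form, so instead I use it as $D_i^\lambda = D_i^\lambda$ and note $\tau D_i^\lambda$ is the relevant object: actually the clean computation is $D_i^\lambda\, \tau\, D_j^\mu$. From the left, $D_i^\lambda \tau = H_{T_i^\lambda} V_{T_i^\lambda} \tau$; I cannot simplify $V_{T_i^\lambda}\tau$ directly, so I should instead bracket differently. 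The right way: write $D_i^\lambda D_j^\mu = D_i^\lambda \tau^2 D_j^\mu$ since $\tau^2 = \iota$. Hmm, that reintroduces $\tau$ in the middle. Let me restructure: the standard trick is to show $D_i^\lambda \tau = D_i^\lambda$ is false but $\tau D_j^\mu = -D_j^\mu$ combined with $D_i^\lambda \tau = D_i^\lambda$ — and indeed from the left, since $\tau \in G_H(T_i^\lambda)$, Lemma \ref{lemmasigns} gives $\tau H_{T_i^\lambda} = H_{T_i^\lambda}$, but I have $H$ on the left of $D_i^\lambda$, so $\tau D_i^\lambda$ won't simplify either. The correct grouping uses that $D_i^\lambda$ ends in $V_{T_i^\lambda}$ and begins with $H_{T_i^\lambda}$: so $D_i^\lambda \tau$ does not simplify, but I should use $\tau$ from the $H_{T_i^\lambda}$ side only if I consider $\tau D_i^\lambda$ — which is not what appears.

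The resolution, which I would present carefully, is this: consider the single element $D_i^\lambda D_j^\mu$, and insert $\iota = \tau\tau$ between the $V$ of $D_i^\lambda$ and the $H$ of $D_j^\mu$; that is, examine $H_{T_i^\lambda} V_{T_i^\lambda} \tau \cdot \tau H_{T_j^\mu} V_{T_j^\mu}$. Since $\tau \in G_V(T_j^\mu)$ acting on the right of nothing useful — I instead use the \emph{middle} factor cleanly by a different route: apply the anti-automorphism $*\colon \mathbb{F}S_n \to \mathbb{F}S_n$, $\sigma^* = \sigma^{-1}$, which satisfies $H_T^* = H_T$ and $V_T^* = V_T$, so $(D_i^\lambda D_j^\mu)^* = (D_j^\mu)^* (D_i^\lambda)^* = V_{T_j^\mu} H_{T_j^\mu} V_{T_i^\lambda} H_{T_i^\lambda}$. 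This is still awkward. The cleanest honest approach: it suffices to prove $H_{T_i^\lambda} V_{T_j^\mu} = 0$ whenever $Y^\lambda \succ Y^\mu$ for \emph{arbitrary} tableaux of these shapes, because then $D_i^\lambda D_j^\mu = H_{T_i^\lambda}(V_{T_i^\lambda} H_{T_j^\mu}) V_{T_j^\mu}$ and expanding the middle $V_{T_i^\lambda}H_{T_j^\mu} = \sum_{v,h}\sgn(v)\,vh$ reduces, term by term, to products $H_{T_i^\lambda}\, vh\, V_{T_j^\mu} = H_{T_i^\lambda}\,v\,(h V_{T_j^\mu} h^{-1})\,h = \sgn(h)^{?}\,\ldots$ — and $h V_{T_j^\mu} h^{-1} = V_{h T_j^\mu}$ by Proposition \ref{propositionconjugate}, while $H_{T_i^\lambda} v = H_{v^{-1}T_i^\lambda}\,v$ similarly, so each term is (up to a scalar and a right multiplication by a group element) of the form $H_{T'} V_{T''}$ with $T'$ of shape $\lambda$ and $T''$ of shape $\mu$. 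Thus the whole claim reduces to: $H_{T^\lambda} V_{T^\mu} = 0$ for all tableaux $T^\lambda, T^\mu$ when $Y^\lambda \succ Y^\mu$.

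So the heart of the proof is this lemma, which I would prove as follows. By Proposition \ref{propintersection} choose $x \ne y$ in one row of $T^\lambda$ and one column of $T^\mu$, let $\tau = (x\,y)$, so $\tau \in G_H(T^\lambda) \cap \big(\text{column-stabilizer of } T^\mu\big)$. Then by Lemma \ref{lemmasigns}, $H_{T^\lambda}\tau = H_{T^\lambda}$ (from $\tau \in G_H(T^\lambda)$) and $\tau V_{T^\mu} = \sgn(\tau) V_{T^\mu} = -V_{T^\mu}$ (from $\tau \in G_V(T^\mu)$, since $\tau$ stabilizes each column of $T^\mu$ setwise). Therefore
\[
H_{T^\lambda} V_{T^\mu} = (H_{T^\lambda}\tau)(\tau V_{T^\mu}) / \tau^{-1}\tau
\]
— more precisely, $H_{T^\lambda} V_{T^\mu} = H_{T^\lambda}(\tau\tau)V_{T^\mu} = (H_{T^\lambda}\tau)(\tau V_{T^\mu}) = H_{T^\lambda}\cdot(-V_{T^\mu}) = -\,H_{T^\lambda} V_{T^\mu}$, forcing $H_{T^\lambda}V_{T^\mu} = 0$. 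I expect the main obstacle to be exactly the bookkeeping in the reduction step — verifying that expanding $V_{T_i^\lambda} H_{T_j^\mu}$ term by term and conjugating produces genuine symmetric/alternating sums $H_{T'}V_{T''}$ of the right shapes (so that the vanishing lemma applies to each term and hence to the sum), together with dispatching the case $Y^\mu \succ Y^\lambda$ by the anti-automorphism $\sigma \mapsto \sigma^{-1}$, which sends $H_{T^\mu}V_{T^\lambda}$-type products to $V_{T^\lambda}H_{T^\mu}$-type products — so one should instead run the symmetric argument using $V_{T^\mu} H_{T^\lambda} = 0$, proved identically with the roles of $H$ and $V$ swapped (now $\tau \in G_V(T^\mu) \cap \text{(row-stabilizer of }T^\lambda)$ from the transpose form of Proposition \ref{propintersection}, i.e. two numbers in one column of $T^\mu$ lying in one row of $T^\lambda$, which is the same statement).
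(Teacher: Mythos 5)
Your treatment of the case $Y^\lambda \succ Y^\mu$ is essentially the paper's argument and is sound: you isolate the correct key lemma, namely that $H_{T'}V_{T''}=0$ whenever $T'$ has shape $\lambda$, $T''$ has shape $\mu$ and $Y^\lambda \succ Y^\mu$ (proved by inserting $t^2$ for a transposition $t \in G_H(T') \cap G_V(T'')$ supplied by Proposition \ref{propintersection}), and you then kill $D_i^\lambda D_j^\mu = H_{T_i^\lambda}\big(V_{T_i^\lambda}H_{T_j^\mu}\big)V_{T_j^\mu}$ term by term. Two remarks on that part. There is a factorization slip: from Proposition \ref{propositionconjugate} one gets $H_T\,v = v\,H_{v^{-1}T}$, not $H_{v^{-1}T}\,v$; the corrected identity still makes the reduction go through, with each term equal to $\sgn(v)\,v\,\big(H_{v^{-1}T_i^\lambda}V_{hT_j^\mu}\big)\,h$. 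And the paper does this more economically by writing $V_{T^\lambda}H_{T^\mu}=\sum_{p\in S_n}x_p\,p$ as a generic group-algebra element and showing $H_{T^\lambda}\,p\,V_{T^\mu} = H_{T^\lambda}V_{pT^\mu}\,p = 0$ for every $p$, so no $p=vh$ decomposition is needed.

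Your dispatch of the remaining case $Y^\lambda \prec Y^\mu$, however, has two genuine errors. First, the involution $\sigma\mapsto\sigma^{-1}$ does not carry $D_i^\lambda D_j^\mu$ to $D_j^\mu D_i^\lambda$: since $H_T^{\,*}=H_T$ and $V_T^{\,*}=V_T$, one gets $(D_i^\lambda D_j^\mu)^{*}=V_{T_j^\mu}H_{T_j^\mu}V_{T_i^\lambda}H_{T_i^\lambda}$, a product of shape $VHVH$, not $HVHV$, so vanishing of one is not equivalent to vanishing of the other by this trick. Second, the ``transpose form'' of Proposition \ref{propintersection} you invoke at the end is false: take $\lambda=(1,1)$, $\mu=(2)$, so $T^\mu$ is a single row and $T^\lambda$ a single column; then every column of $T^\mu$ and every row of $T^\lambda$ is a singleton, so no pair has two common entries, even though Proposition \ref{propintersection} applied as stated (row of the larger, column of the smaller) does give two such numbers. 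In fact $Y^\lambda\prec Y^\mu$ is the \emph{easy} case and needs neither the vanishing lemma nor a sandwich: Proposition \ref{propintersection} applied to $(T_j^\mu, T_i^\lambda)$ gives a transposition $t\in G_V(T_i^\lambda)\cap G_H(T_j^\mu)$, and then the middle factor itself already dies, since $V_{T_i^\lambda}H_{T_j^\mu}=(V_{T_i^\lambda}t)(tH_{T_j^\mu})=-V_{T_i^\lambda}H_{T_j^\mu}$, hence $D_i^\lambda D_j^\mu = H_{T_i^\lambda}\cdot 0\cdot V_{T_j^\mu}=0$.
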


\begin{proof}
We first assume $Y^\lambda \prec Y^\mu$.
Proposition \ref{propintersection} shows that there exist two numbers $k, \ell$ in the same row of $T^\mu$ and 
the same column of $T^\lambda$; we now omit the subscripts $i,j$.
For the transposition $t  = (k, \ell)$ we have $t \in G_V(T^\lambda)$ and $t \in G_H(T^\mu)$.
Using Lemma \ref{lemmasigns} we obtain
  \begin{align*}
  D^\lambda D^\mu
  &=
  H_{T^\lambda} V_{T^\lambda} H_{T^\mu} V_{T^\mu}
  =
  H_{T^\lambda} V_{T^\lambda} \, t^2 H_{T^\mu} V_{T^\mu}
  =
  H_{T^\lambda} ( V_{T^\lambda} t ) ( t H_{T^\mu} ) V_{T^\mu}
  \\
  &=
  H_{T^\lambda} ( - V_{T^\lambda} ) ( H_{T^\mu} ) V_{T^\mu}
  =
  - H_{T^\lambda} V_{T^\lambda} H_{T^\mu} V_{T^\mu}
  =
  - D^\lambda D^\mu.
  \end{align*}
Hence $D^\lambda D^\mu = 0$.
On the other hand, if $Y^\lambda \succ Y^\mu$, then Proposition \ref{propositionconjugate} implies that
for any $p \in S_n$ we have
  \[
  H_{T^\lambda} p V_{T^\mu}
  =
  H_{T^\lambda} \big( p V_{T^\mu} p^{-1} \big) p
  =
  H_{T^\lambda} V_{p T^\mu} p.
  \]
Proposition \ref{propintersection} shows that there exist two numbers $k, \ell$ in the same row of $T^\lambda$ 
and the same column of $pT^\mu$.
Then $t = (k,\ell) \in G_V(pT^\mu) \cap G_H(T^\lambda)$ and so
  \[
  H_{T^\lambda} V_{p T^\mu} p
  =
  H_{T^\lambda} t^2 V_{p T^\mu} p
  =
  ( H_{T^\lambda} t ) ( t V_{p T^\mu} ) p
  =
  ( H_{T^\lambda} ) ( - V_{p T^\mu} ) p
  =
  - H_{T^\lambda} V_{p T^\mu} p.
  \]
Hence $H_{T^\lambda} V_{p T^\mu} p = 0$ and so $H_{T^\lambda} p V_{T^\mu} = 0$, for all $p \in S_n$.
Therefore
  \[
  D^\lambda D^\mu
  =
  H_{T^\lambda} V_{T^\lambda} H_{T^\mu} V_{T^\mu}
  =
  H_{T^\lambda} \bigg( \sum_{p \in S_n} x_p \, p \bigg) V_{T^\mu}
  =
  \sum_{p \in S_n} x_p \big( H_{T^\lambda} p V_{T^\mu} \big)
  =
  0,
  \]
where $x_p \in \mathbb{F}$ for all $p \in S_n$.
This completes the proof.
\end{proof}

\begin{corollary} \label{corollary0}
Let $\lambda \vdash n$, and let $T_1, \dots, T_{d_\lambda}$ be the standard tableaux in lex order.
If $i > j$ then $D_i D_j = 0$.
\end{corollary}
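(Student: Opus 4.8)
The plan is to mimic the first case in the proof of Proposition~\ref{prop0product}, using Proposition~\ref{proprowcolumn} in place of Proposition~\ref{propintersection} to supply a transposition that is simultaneously vertical for $T_i$ and horizontal for $T_j$. Since $i > j$, I would apply Proposition~\ref{proprowcolumn} with $r = i$ and $s = j$, obtaining a row index $a \in \{1,\dots,k\}$ and a column index $b \in \{1,\dots,n_1\}$ for which $T_i(-,b) \cap T_j(a,-)$ contains at least two numbers; fix two of them and call them $x$ and $y$.

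Next I would observe that the transposition $t = (x,y)$ lies in $G_V(T_i)$, because $x$ and $y$ both occur in column $b$ of $T_i$, and likewise $t \in G_H(T_j)$, because $x$ and $y$ both occur in row $a$ of $T_j$. Then I would expand $D_i D_j = H_{T_i} V_{T_i} H_{T_j} V_{T_j}$, insert $t^2 = \iota$ between $V_{T_i}$ and $H_{T_j}$, and regroup as $H_{T_i} ( V_{T_i} t )( t H_{T_j} ) V_{T_j}$. By Lemma~\ref{lemmasigns} we have $V_{T_i} t = \sgn(t) V_{T_i} = - V_{T_i}$ and $t H_{T_j} = H_{T_j}$, so this expression equals $- H_{T_i} V_{T_i} H_{T_j} V_{T_j} = - D_i D_j$. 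Hence $2 D_i D_j = 0$, and since $\mathbb{F}$ has characteristic $0$, we conclude $D_i D_j = 0$.

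I do not expect any real obstacle here beyond bookkeeping: the one point requiring care is that the letters $i,j$ name tableaux in the statement of the corollary, whereas in Proposition~\ref{proprowcolumn} the analogous letters name a row and a column, so I would introduce fresh names ($a$ for the row, $b$ for the column) to avoid the clash. Conceptually the argument is just the specialization of the $Y^\lambda \prec Y^\mu$ case of Proposition~\ref{prop0product} to a single partition $\lambda$, with Proposition~\ref{proprowcolumn} furnishing the two repeated numbers (in a row of $T_j$ and a column of $T_i$) that Proposition~\ref{propintersection} furnished in the two-partition setting.
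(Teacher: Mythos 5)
Your proof is correct and follows the paper's argument essentially verbatim: both invoke Proposition~\ref{proprowcolumn} to obtain two numbers sharing a column of $T_i$ and a row of $T_j$, form the transposition $t$, insert $t^2$ between $V_{T_i}$ and $H_{T_j}$, and apply Lemma~\ref{lemmasigns} to get $D_iD_j = -D_iD_j$. The only cosmetic differences are your relabelling of indices to avoid the clash with Proposition~\ref{proprowcolumn} (which the paper handles by simply using $k,\ell$ for the two numbers) and your explicit appeal to $\mathrm{char}\,\mathbb{F}=0$ at the last step, which the paper leaves implicit.
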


\begin{proof}
We write $H_i, V_i$ for $H_{T_i}, V_{T_i}$.
By Proposition \ref{proprowcolumn}, there exist two numbers $k,\ell$ in the same column of $T_i$ 
and the same row of $T_j$.
Using the transposition $t = (k,\ell)$ and Lemma \ref{lemmasigns} we obtain
  \[
  D_i D_j
  =
  H_i V_i H_j V_j
  =
  H_i V_i t^2 H_j V_j
  =
  H_i ( V_i t ) ( t H_j ) V_j
  =
  H_i ( - V_i ) ( H_j ) V_j
  =
  - D_i D_j.
  \]
Therefore $D_i D_j = 0$.
\end{proof}

\begin{proposition}
\emph{\textbf{(von Neumann's Theorem)}}
Let $\lambda \vdash n$.
For $i = 1, \dots, n!$ we have $D_i^2 = c_i D_i$ where $c_i = n!/f_i$, and $f_i$ is the dimension of
the left ideal $\mathbb{F} S_n D_i$.
\end{proposition}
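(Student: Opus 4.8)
The plan is to show first that $D_i^2 = c_i D_i$ for a suitable scalar $c_i \in \mathbb{F}$, and then to identify $c_i$ with $n!/f_i$. For the first part, I would use the classical observation that $D_i$ is, up to a scalar, idempotent because of a rigidity property: an element $a \in \mathbb{F} S_n$ satisfying $h a v = \sgn(v)\, a$ for all $h \in G_H(T_i)$ and $v \in G_V(T_i)$ must be a scalar multiple of $D_i$. I would prove this auxiliary claim by writing $a = \sum_{p} x_p\, p$ and analyzing the coefficients: the relation forces $x_{hv} = \sgn(v)\, x_\iota$ for all $h, v$, while for any $p$ not of the form $hv$ one uses Lemma \ref{lemmaiff} (there are two numbers in a row of $T_i$ lying in a column of $pT_i$) together with the transposition trick from Proposition \ref{prop0product} to conclude $x_p = 0$. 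Hence $a = x_\iota D_i$. Now $D_i^2 = H_i V_i H_i V_i$ satisfies $h (D_i^2) v = \sgn(v) D_i^2$ by Lemma \ref{lemmasigns}, so $D_i^2 = c_i D_i$ for some $c_i \in \mathbb{F}$; comparing the coefficient of $\iota$ on both sides shows $c_i$ equals the coefficient of $\iota$ in $D_i^2$, which is a nonnegative integer, and a short argument (e.g. $D_i^3 = c_i D_i^2 = c_i^2 D_i$ together with $D_i \ne 0$) shows $c_i \ne 0$.

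The second part is to compute $c_i$. The key is that $e_i := c_i^{-1} D_i$ is an idempotent, so $\mathbb{F} S_n = \mathbb{F} S_n e_i \oplus \mathbb{F} S_n (1 - e_i)$ as left $\mathbb{F} S_n$-modules, and left multiplication by $e_i$ is the projection onto $\mathbb{F} S_n e_i = \mathbb{F} S_n D_i$ along the complement. I would then use the standard fact that for an idempotent $e$ in a finite-dimensional algebra $A$, the trace of the left-multiplication operator $L_e \colon A \to A$ equals $\dim_{\mathbb{F}}(Ae)$. Applying this with $A = \mathbb{F} S_n$ and $e = e_i$ gives $\mathrm{tr}(L_{e_i}) = \dim \mathbb{F} S_n D_i = f_i$, hence $\mathrm{tr}(L_{D_i}) = c_i f_i$. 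On the other hand I can compute $\mathrm{tr}(L_{D_i})$ directly: writing $D_i = \sum_p x_p\, p$ with $x_\iota = 1$ (the identity appears exactly once, via $h = \iota$, $v = \iota$), and noting that for $p \ne \iota$ the operator $L_p$ is a permutation matrix in the basis $S_n$ with no fixed points, we get $\mathrm{tr}(L_p) = n!\, \delta_{p,\iota}$ and therefore $\mathrm{tr}(L_{D_i}) = n! \cdot x_\iota = n!$. Combining, $c_i f_i = n!$, i.e. $c_i = n!/f_i$.

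The main obstacle is establishing the rigidity lemma cleanly: that any $a$ with $h a v = \sgn(v) a$ is a scalar multiple of $D_i$. The coefficient bookkeeping is delicate because the same group element $p$ can a priori be hit from different pairs $(h, v)$, but this is exactly controlled by the second assertion of the (first, unnamed) lemma in the excerpt, which guarantees the factorization $p = hv$ is unique when it exists; and the vanishing of the remaining coefficients is precisely where Lemma \ref{lemmaiff} enters. Once that lemma is in hand, everything else is routine: the identification of $c_i$ via the trace of $L_{e_i}$ is a general fact about idempotents that I would either cite or prove in one line by choosing a basis adapted to the decomposition $A = Ae \oplus A(1-e)$.

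An alternative route for the scalar, avoiding traces, is to apply both sides of $D_i^2 = c_i D_i$ to compute dimensions directly: $\mathbb{F} S_n D_i$ is spanned by $\{ p D_i : p \in S_n \}$, and one can show these realize the irreducible module of dimension $d_\lambda$, while a counting argument with $|G_H(T_i)|$ and $|G_V(T_i)|$ recovers $c_i$; but the trace argument is shorter and I would present that one, mentioning that $f_i = d_\lambda$ will be identified in the sequel.
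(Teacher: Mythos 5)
Your proposal follows essentially the same two steps as the paper: coefficient bookkeeping (using Lemma \ref{lemmaiff} and the uniqueness of the factorization $p = hv$) to obtain $D_i^2 = c_i D_i$, then a trace computation to identify $c_i = n!/f_i$; packaging the first step as a rigidity lemma for any $a$ satisfying $hav = \sgn(v)\,a$ is a standard reformulation of the identical calculation, not a different route.

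However, two errors in the second step need fixing. The claim that left multiplication $L_{e_i}$ projects onto $\mathbb{F}S_n e_i$ along $\mathbb{F}S_n(1-e_i)$, and the ``standard fact'' that $\mathrm{tr}(L_e) = \dim_{\mathbb{F}}(Ae)$, are both wrong as stated: $L_e$ has image the \emph{right} ideal $eA$ and projects onto that, so $\mathrm{tr}(L_e) = \dim(eA)$, which can differ from $\dim(Ae)$ (take $A$ the upper triangular $2\times 2$ matrices and $e = E_{11}$, where $\dim(eA)=2$ but $\dim(Ae)=1$). Since $f_i$ is by definition $\dim \mathbb{F}S_n D_i$, the dimension of a \emph{left} ideal, the correct operator to use is right multiplication $R_{D_i}$, exactly as the paper does; your final numbers are rescued only by the special fact that in a group algebra $\mathrm{tr}(L_a) = \mathrm{tr}(R_a) = n!\,x_\iota$ for any $a = \sum_p x_p\,p$, which you should either invoke explicitly or avoid by switching to $R$. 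Second, your ``short argument'' for $c_i \ne 0$ is vacuous: $D_i^3 = c_i^2 D_i$ together with $D_i \ne 0$ does not rule out $c_i = 0$ (which would give $D_i^2 = D_i^3 = 0$, perfectly consistent with $D_i \ne 0$). That preliminary step should simply be dropped; the trace identity $c_i f_i = n!$, together with $f_i \ge 1$ (from $D_i \ne 0$), already yields $c_i = n!/f_i \ne 0$.
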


\begin{proof}
For scalars $x_p \in \mathbb{F}$ which we will determine, we write
  \[
  D_i^2 = \sum_{p \in S_n} x_p \, p.
  \] 
For any $h \in G_H(T_i)$ and $v \in G_V(T_i)$ we have
  \begin{align*}
  h D_i^2 v
  &=
  h \bigg( \sum_{p \in S_n} x_p \, p \bigg) v
  =
  \sum_{p \in S_n} x_p \, hpv,
  \\
  h D_i^2 v
  &=
  ( h H_i ) V_i H_i ( V_i v )
  =
  \sgn(v) H_i V_i H_i V_i
  =
  \sgn(v) D_i^2.
  \end{align*}
Therefore
  \begin{equation}
  \label{coeffs}
  \sum_{p \in S_n} x_p \, hpv = \sgn(v) \sum_{p \in S_n} x_p \, p.
  \end{equation}
Each permutation in $S_n$ occurs once and only once on each side of this equation.

First, consider the coefficient in $D_i^2$ of a permutation of the form $hv$.
On the left side of \eqref{coeffs} take $p = \iota$, on the right side take $p = hv$, and compare coefficients:
  \[
  x_\iota = \sgn(v) x_{hv}.
  \]
Hence $x_{hv} = \sgn(v) x_\iota$.
Second, consider the coefficient in $D_i^2$ of a permutation $q$ not of the form $hv$.
Lemma \ref{lemmaiff} implies that there are two numbers $k,\ell$ in the same row of $T_i$ and the same column of $qT_i$.
For the transposition $t = (k,\ell)$, we have $t \in G_H(T_i)$ and $q^{-1}tq \in G_V(T_i)$.
We can take $h = t$ and $v = q^{-1}tq$ in equation \eqref{coeffs}:
  \[
  \sum_{p \in S_n} x_p \, tpq^{-1}tq = \sgn(q^{-1}tq) \sum_{p \in S_n} x_p \, p.
  \]
Setting $p = q$ on both sides, we obtain
  \[
  x_q \, tqq^{-1}tq = \sgn(q^{-1}tq) x_q \, q,
  \]
and this simplifies to $x_q q = -x_q q$, implying $x_q = 0$.
Combining the results of the two cases, we obtain $D_i^2 = c_i D_i$ where $c_i = x_\iota$.

It remains to show that $x_\iota = n!/f_i$.
We choose a basis for the left ideal $\mathbb{F} S_n D_i$ consisting of elements $p_1 D_i, \dots, p_{f_i} D_i$ where
$p_1, \dots, p_{f_i} \in S_n$, and extend this to a basis of $\mathbb{F} S_n$.
We regard $D_i$ as a linear operator on $\mathbb{F} S_n$, acting by right multiplication.
The matrix representing $D_i$ with respect to our basis has the form
  \[
  \left[
  \begin{array}{cc}
  x_\iota I_{f_i} & \ast \\
  0 & 0
  \end{array}
  \right],
  \]
where $\ast$ indicates irrelevant entries.
Hence $\mathrm{trace}( D_i ) = x_\iota f_i$.
On the other hand, since $\mathrm{trace}(q) = 0$ for $q \ne \iota$, we have
  \[
  \mathrm{trace}( D_i )
  =
  \mathrm{trace}\bigg( \sum_{h,v} \sgn(v) hv \bigg)
  =
  \sum_{h,v} \sgn(v) \mathrm{trace}( hv )
  =
  \mathrm{trace}\big( I_{\mathbb{F}S_n} \big)
  =
  n!.
  \]
Now we have $x_\iota f_i = n!$, so $c_i = x_\iota = n!/f_i$.
\end{proof}

\begin{definition} \label{defscalar}
Let $T^\lambda_1, \dots, T^\lambda_{n!}$ be all the tableaux of shape $\lambda \vdash n$. 
We define
  \[
  E^\lambda_i = \frac{f_i}{n!} D^\lambda_i \qquad (i = 1, \dots, n!).
  \]
\end{definition}

\begin{corollary}
Every $E^\lambda_i$ is an idempotent: $( E^\lambda_i )^2 = E^\lambda_i$.
\end{corollary}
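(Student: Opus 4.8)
The plan is to unwind the definition of $E^\lambda_i$ and substitute the scalar relation $(D^\lambda_i)^2 = c_i D^\lambda_i$ supplied by von Neumann's Theorem. Since $E^\lambda_i = \frac{f_i}{n!} D^\lambda_i$ is just a nonzero scalar multiple of $D^\lambda_i$, squaring it produces $\left(\frac{f_i}{n!}\right)^2 (D^\lambda_i)^2$, and the only fact needed is the precise value of the proportionality constant $c_i$.

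Concretely, I would carry out the following single computation. First, write $(E^\lambda_i)^2 = \frac{f_i^2}{(n!)^2}(D^\lambda_i)^2$. Next, invoke von Neumann's Theorem to replace $(D^\lambda_i)^2$ by $c_i D^\lambda_i = \frac{n!}{f_i} D^\lambda_i$. Substituting gives $(E^\lambda_i)^2 = \frac{f_i^2}{(n!)^2} \cdot \frac{n!}{f_i} D^\lambda_i = \frac{f_i}{n!} D^\lambda_i$. Finally, recognize the right-hand side as $E^\lambda_i$ by Definition \ref{defscalar}. This establishes $(E^\lambda_i)^2 = E^\lambda_i$, and the argument applies uniformly for every $i = 1, \dots, n!$ and every $\lambda \vdash n$.

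Honestly, there is no real obstacle here: the corollary is an immediate consequence of the normalization chosen in Definition \ref{defscalar}, which was set up precisely so that the factor $c_i = n!/f_i$ from von Neumann's Theorem is cancelled. The only subtlety worth noting is that $f_i > 0$ (it is the dimension of a nonzero left ideal, since $D^\lambda_i \ne 0$), so the scalar $\frac{f_i}{n!}$ is well defined and nonzero; this guarantees $E^\lambda_i$ is a genuine, nonzero idempotent rather than the trivial one. The proof is therefore a two- or three-line display with a reference to the preceding proposition.
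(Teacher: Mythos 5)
Your proof is correct and is essentially the same (and the only sensible) argument: the paper states this corollary without proof precisely because it follows immediately from unwinding Definition \ref{defscalar} and cancelling the constant $c_i = n!/f_i$ from von Neumann's Theorem, exactly as you do. Your remark that $f_i > 0$ (so the normalizing scalar is well defined and nonzero) is a sound observation, though the paper takes this for granted.
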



\subsection{Two-sided ideals in the group algebra}

The results in this subsection lead us towards an explicit description of the isomorphism $\psi$ in the
Wedderburn decomposition \eqref{iso}.

\begin{definition}
If $T_i$, $T_j$ are tableaux of shape $\lambda \vdash n$ then we define
  \[
  \esgn_{ij} =
  \begin{cases}
  \, \sgn(v) &\text{if $s_{ji} = vh$ for $h \in G_H(T_i)$ and $v \in G_V(T_i)$}
  \\
  \, 0 &\text{otherwise}
  \end{cases}
  \]
\end{definition}

\begin{lemma} \label{lemmaeiej}
If $T_i$, $T_j$ are tableaux of shape $\lambda \vdash n$ then $E_i E_j = \esgn_{ij} E_i s_{ij}$.
\end{lemma}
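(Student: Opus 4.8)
The plan is to push everything down to the level of the Young symmetrizers $D^\lambda_i = H_{T_i}V_{T_i}$. Since $E^\lambda_i = (f_i/n!)\,D^\lambda_i$ by Definition \ref{defscalar}, we have $E_iE_j = \dfrac{f_if_j}{(n!)^2}\,D_iD_j$, so it is enough to evaluate $D_iD_j$. First I would invoke Proposition \ref{propconj} in the form $D_j = s_{ji}D_i s_{ij}$ (equation \eqref{d=sds}) to write
\[
D_iD_j = D_i\,s_{ji}\,D_i\,s_{ij} = H_{T_i}\,( V_{T_i}\,s_{ji}\,H_{T_i} )\,V_{T_i}\,s_{ij},
\]
which isolates the middle factor $V_{T_i}\,s_{ji}\,H_{T_i}$ as the only object that still needs to be understood.

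The key step is the identity $V_{T_i}\,s_{ji}\,H_{T_i} = \esgn_{ij}\,V_{T_i}H_{T_i}$, and I would prove it by splitting into the two cases in the definition of $\esgn_{ij}$. If $s_{ji} = v_0h_0$ with $v_0\in G_V(T_i)$ and $h_0\in G_H(T_i)$ (this factorization being unique because $G_H(T_i)\cap G_V(T_i)=\{\iota\}$), then $\esgn_{ij}=\sgn(v_0)$ and Lemma \ref{lemmasigns} gives $V_{T_i}s_{ji}H_{T_i} = (V_{T_i}v_0)(h_0H_{T_i}) = \sgn(v_0)\,V_{T_i}H_{T_i}$, as required. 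If $s_{ji}$ has no such factorization, then $\esgn_{ij}=0$ and I must show $V_{T_i}s_{ji}H_{T_i}=0$. For this I would note that $s_{ji}$ being of the form (vertical)(horizontal) for $T_i$ is the same as $s_{ij}=s_{ji}^{-1}$ being of the form (horizontal)(vertical) for $T_i$; so by Lemma \ref{lemmaiff} applied to $p=s_{ij}$ there exist $k\neq\ell$ lying in a common row of $T_i$ and in a common column of $s_{ij}T_i$. Then the transposition $t=(k\,\ell)$ satisfies $t\in G_H(T_i)$ and $t':=s_{ji}\,t\,s_{ij}\in G_V(T_i)$ (the latter by tracking positions exactly as in Lemma \ref{lemmaconjugate}), and since $s_{ji}t=t's_{ji}$, combining $tH_{T_i}=H_{T_i}$ with $V_{T_i}t'=-V_{T_i}$ (Lemma \ref{lemmasigns}) yields $V_{T_i}s_{ji}H_{T_i} = -\,V_{T_i}s_{ji}H_{T_i}$, hence $0$ since the characteristic is $0$. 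I expect this vanishing argument to be the one place with real content; everything else is bookkeeping.

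Granting the key identity, I would conclude as follows. Substituting back,
\[
D_iD_j = \esgn_{ij}\,H_{T_i}V_{T_i}H_{T_i}V_{T_i}\,s_{ij} = \esgn_{ij}\,D_i^2\,s_{ij} = \esgn_{ij}\,c_i\,D_i\,s_{ij},
\]
where $c_i=n!/f_i$ by von Neumann's Theorem; therefore
\[
E_iE_j = \frac{f_if_j}{(n!)^2}\,\esgn_{ij}\,c_i\,D_i\,s_{ij} = \esgn_{ij}\,\frac{f_j}{n!}\,D_i\,s_{ij}.
\]
The last thing to check is the harmless-looking but necessary fact $f_i=f_j$: right multiplication by $s_{ij}$ sends $\mathbb{F}S_nD_i$ onto $\mathbb{F}S_nD_is_{ij}=\mathbb{F}S_ns_{ij}D_j=\mathbb{F}S_nD_j$ (using equation \eqref{sd=ds} and that $s_{ij}$ is a unit) and is injective, so $f_i=\dim\mathbb{F}S_nD_i=\dim\mathbb{F}S_nD_j=f_j$. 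Hence $E_iE_j = \esgn_{ij}\,(f_i/n!)\,D_is_{ij} = \esgn_{ij}\,E_i\,s_{ij}$, as claimed. (An alternative to the two-case analysis: one can first observe that $D_is_{ji}D_i=\beta D_i$ for some scalar $\beta$ — by the argument in the proof of von Neumann's Theorem, since $D_is_{ji}D_i$ satisfies $h\,(\cdot)\,v=\sgn(v)(\cdot)$ — and then compute $\beta=c_i\esgn_{ij}$ by taking traces in the regular representation, using cyclicity of the trace together with $D_i^2=c_iD_i$ and $\mathrm{trace}(q)=0$ for $q\neq\iota$.)
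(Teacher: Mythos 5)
Your proof is correct and follows essentially the same route as the paper's: the same two-case split based on the definition of $\esgn_{ij}$, with Lemma~\ref{lemmasigns} handling the factorization case and an inserted transposition obtained from Lemma~\ref{lemmaiff} producing the sign contradiction $V_{T_i}s_{ji}H_{T_i}=-V_{T_i}s_{ji}H_{T_i}$ in the non-factorization case. One welcome refinement: you explicitly verify $f_i=f_j$ (via the isomorphism $\mathbb{F}S_nD_i \to \mathbb{F}S_nD_j$ given by right multiplication by $s_{ij}$), a fact the paper uses silently when it passes from $D_j=s_{ji}D_is_{ij}$ to $E_j=s_{ji}E_is_{ij}$.
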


\begin{proof}
First, assume that $s_{ji} = vh$ for some $h \in G_H(T_i)$, $v \in G_V(T_i)$.
Proposition \ref{propconj}, equation \eqref{d=sds} and von Neumann's Theorem imply
  \begin{align*}
  E_i E_j
  &=
  E_i ( s_{ji} E_i s_{ij} )
  =
  \frac1{c_i^2} H_i ( V_i v ) ( h H_i ) V_i s_{ij}
  =
  \frac1{c_i^2} \sgn(v) H_i V_i H_i V_i s_{ij}
  \\
  &=
  \sgn(v) E_i^2 s_{ij}
  =
  \sgn(v) E_i s_{ij}.
  \end{align*}
Second, assume that $s_{ji} \ne vh$ for any $h \in G_H(T_i)$, $v \in G_V(T_i)$.
Since $T_j = s_{ji} T_i$, Lemma \ref{lemmaiff} shows that there are numbers $k,\ell$ in the same column of $T_i$
and the same row of $T_j$.
Using the transposition $t = (k,\ell) \in G_V(T_i) \cap G_H(T_j)$ we obtain
  \[
  E_i E_j
  =
  \frac1{c_i^2} H_i ( V_i t ) ( t H_j ) V_j
  =
  - \frac1{c_i^2} H_i V_i H_j V_j
  =
  - E_i E_j.
  \]
Hence $E_i E_j = 0$.
\end{proof}

\begin{remark}
From now on we will work only with standard tableaux.
\end{remark}

\begin{definition} \label{defbigeps}
Given a partition $\lambda \vdash n$ with standard tableaux $T_1, \dots, T_{d_\lambda}$ in lex order,
we write $\Epsilon^\lambda$ for the $d_\lambda \times d_\lambda$ matrix with $(i,j)$ entry $\esgn_{ij}$.
\end{definition}

\begin{lemma} \label{E=I+N.lemma}
We have $\Epsilon^\lambda = I^\lambda + \mathcal{F}^\lambda$ where
$I^\lambda$ is the identity matrix and $\mathcal{F}^\lambda$ is a strictly upper triangular matrix.
In particular, $\Epsilon^\lambda$ is invertible.
\end{lemma}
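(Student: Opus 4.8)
The plan is to read off the diagonal entries of $\Epsilon^\lambda$ and the entries strictly below the diagonal separately, and then to invoke the standard fact that a unipotent upper triangular matrix is invertible.

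First I would compute the diagonal. For $i = j$ the permutation $s_{ii}$ is the identity $\iota$, and $\iota = \iota\cdot\iota$ exhibits it in the form $vh$ with $v = h = \iota \in G_V(T_i) \cap G_H(T_i)$; moreover this is the only such decomposition, since $vh = \iota$ forces $v = h^{-1} \in G_H(T_i) \cap G_V(T_i) = \{\iota\}$. Hence $\esgn_{ii} = \sgn(\iota) = 1$, so every diagonal entry of $\Epsilon^\lambda$ equals $1$.

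Next I would show $\esgn_{ij} = 0$ whenever $i > j$ (here $T_1,\dots,T_{d_\lambda}$ are the standard tableaux in lex order). The quickest route is algebraic. By Corollary \ref{corollary0} we have $D_i D_j = 0$ when $i > j$, and since $E_i,E_j$ are scalar multiples of $D_i,D_j$ this gives $E_i E_j = 0$. On the other hand Lemma \ref{lemmaeiej} says $E_i E_j = \esgn_{ij}\,E_i s_{ij}$; right-multiplying by $s_{ij}^{-1} = s_{ji}$ yields $\esgn_{ij}\,E_i = 0$. Since $E_i$ is nonzero — its coefficient of $\iota$ is $f_i/n! \ne 0$, because in $D_i = \sum_{h,v}\sgn(v)\,hv$ the only pair with $hv=\iota$ is $(h,v)=(\iota,\iota)$, contributing $1$ — we conclude $\esgn_{ij} = 0$. (Alternatively one can argue combinatorially: $\esgn_{ij}\ne 0$ would mean $s_{ji} = vh$ with $v \in G_V(T_i)$, $h \in G_H(T_i)$, equivalently that $s_{ij}$ has the form $h'v'$; applying Lemma \ref{lemmaiff} to $T_i$ together with Proposition \ref{proprowcolumn} — which furnishes two numbers lying in a common column of $T_i = T_r$ and a common row of $T_j = T_s$ — rules this out. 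The algebraic argument packages exactly this.)

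Combining the two computations, $\mathcal{F}^\lambda := \Epsilon^\lambda - I^\lambda$ has zero diagonal and vanishes below the diagonal, hence is strictly upper triangular; in particular $(\mathcal{F}^\lambda)^{d_\lambda} = 0$. Therefore $\Epsilon^\lambda = I^\lambda + \mathcal{F}^\lambda$ is invertible, with inverse $\sum_{m=0}^{d_\lambda-1}(-\mathcal{F}^\lambda)^m$ (equivalently $\det\Epsilon^\lambda = 1$). I do not expect a serious obstacle here; the only point needing care is keeping the conventions straight — the roles of $s_{ij}$ versus $s_{ji}$ and of $hv$ versus $vh$ in the definition of $\esgn_{ij}$ — so that Corollary \ref{corollary0} and Lemma \ref{lemmaeiej} are applied with the correct indices.
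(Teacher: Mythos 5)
Your proof is correct and follows essentially the same route as the paper: Corollary~\ref{corollary0} combined with Lemma~\ref{lemmaeiej} to kill the entries below the diagonal, and $s_{ii}=\iota$ to pin down the diagonal. The one small divergence is on the diagonal: you read $\esgn_{ii}=1$ directly from the definition (using uniqueness of the $vh$ decomposition of $\iota$), whereas the paper instead applies Lemma~\ref{lemmaeiej} with $i=j$ to get $E_i = E_i^2 = \esgn_{ii}E_i$ and then uses $E_i\ne 0$; both are valid and equally short.
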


\begin{proof}
If $i > j$ then Corollary \ref{corollary0} implies that $E_i E_j = 0$ and so $\esgn_{ij} = 0$.
If $i = j$ then $s_{ii} = \iota$ and so Lemma \ref{lemmaeiej} gives $E_i = \esgn_{ii} E_i$, 
hence $\esgn_{ii} = 1$.
\end{proof}

\begin{proposition} \label{almostmatrixunits}
If $\lambda \vdash n$ and $T_i, T_j, T_k, T_\ell$ are standard tableaux of shape $\lambda$ then
  \[
  ( E_i s_{ij} )( E_k s_{k\ell} ) = \esgn_{jk} E_i s_{i\ell}.
  \]
\end{proposition}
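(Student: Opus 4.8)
The plan is to reduce the statement to two facts already in hand: the conjugation relation \eqref{sd=ds} of Proposition \ref{propconj}, and the product rule $E_jE_k=\esgn_{jk}E_js_{jk}$ of Lemma \ref{lemmaeiej}. The one preliminary point that needs a word of care is that \eqref{sd=ds} was stated for the $D_i$, whereas the assertion involves the idempotents $E_i=(f_i/n!)D_i$, so I would first check that passing from $D_i$ to $E_i$ does not disturb the relation. For fixed $\lambda$ the integers $f_i$ appearing in von Neumann's Theorem are all equal: right multiplication by $s_{ij}$ is a vector-space automorphism of $\mathbb{F}S_n$ that carries the left ideal $\mathbb{F}S_nD_i$ onto $\mathbb{F}S_nD_is_{ij}=\mathbb{F}S_ns_{ij}D_j=\mathbb{F}S_nD_j$, using \eqref{sd=ds}; hence $f_i=f_j=:f_\lambda$. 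Consequently all the normalizing scalars $f_i/n!$ in Definition \ref{defscalar} coincide, $E_i=(f_\lambda/n!)\,D_i$, and \eqref{sd=ds} immediately upgrades to
\[
s_{ij}E_j=E_is_{ij}\qquad(1\le i,j\le d_\lambda).
\]

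Granting this, the proof is a short chain of substitutions. Starting from the left-hand side, I would use the displayed identity to pull $s_{ij}$ to the front and isolate a product of two idempotents:
\[
(E_is_{ij})(E_ks_{k\ell})=(s_{ij}E_j)(E_ks_{k\ell})=s_{ij}\,(E_jE_k)\,s_{k\ell}.
\]
Then Lemma \ref{lemmaeiej} gives $E_jE_k=\esgn_{jk}E_js_{jk}$, and the remaining group elements collapse by the composition rule $s_{ab}s_{bc}=s_{ac}$ for the connecting permutations, together again with $s_{ij}E_j=E_is_{ij}$:
\[
s_{ij}\,(E_jE_k)\,s_{k\ell}=\esgn_{jk}\,s_{ij}E_j\,s_{jk}s_{k\ell}=\esgn_{jk}\,(s_{ij}E_j)\,s_{j\ell}=\esgn_{jk}\,E_is_{ij}\,s_{j\ell}=\esgn_{jk}\,E_is_{i\ell},
\]
which is the asserted identity.

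I do not anticipate a genuine obstacle: the formula is precisely the multiplication law of matrix units twisted by the scalar cocycle $\esgn$, and once $E_jE_k$ and the conjugation relation are available the argument is purely formal bookkeeping with the $s_{ab}$. The only substantive point is the observation that the normalizing constants $f_i/n!$ are independent of the tableau within a fixed shape — without it one cannot slide $s_{ij}$ past the idempotents freely, because $E_i$ and $E_j$ would a priori be scaled by different constants. Everything else is the three-line computation above.
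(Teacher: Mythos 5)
Your proof is correct and follows essentially the same line as the paper's: pull $s_{ij}$ across $E_j$ via Proposition \ref{propconj}, apply Lemma \ref{lemmaeiej} to $E_jE_k$, and collapse the connecting permutations using $s_{ab}s_{bc}=s_{ac}$. You go one step further than the paper by explicitly verifying that the normalizing constants $f_i$ are independent of the tableau within a fixed shape, which is indeed the hidden fact needed to upgrade $s_{ij}D_j=D_is_{ij}$ from \eqref{sd=ds} to $s_{ij}E_j=E_is_{ij}$; the paper uses this silently.
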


\begin{proof}
Using Proposition \ref{propconj}, equation \eqref{sd=ds}, and Lemma \ref{lemmaeiej}, we obtain
  \[
  E_i s_{ij} E_k s_{k\ell}
  =
  s_{ij} E_j E_k s_{k\ell}
  =
  \esgn_{jk} s_{ij} E_j s_{jk} s_{k\ell}
  =
  \esgn_{jk} E_i s_{ij} s_{jk} s_{k\ell}
  =
  \esgn_{jk} E_i s_{i\ell},
  \]
as required.
\end{proof}

\begin{remark}
If we replace the scalar $\esgn_{jk}$ in Proposition \ref{almostmatrixunits} by the Kronecker delta 
$\delta_{jk}$, and write $E_{ij} = E_i s_{ij}$, then we obtain the matrix unit relations 
$E_{ij} E_{k\ell} = \delta_{jk} E_{i\ell}$.
In order to construct the isomorphism $\psi$, we need to modify the elements $E_i s_{ij}$ to produce other
elements which exactly satisfy the matrix unit relations. 
\end{remark}

\begin{definition}
We write $N^\lambda$ for the the subspace spanned by the $E^\lambda_i s^\lambda_{ij}$:
  \[
  N^\lambda = \mathrm{span} \{ E^\lambda_i s^\lambda_{ij} \mid 1 \le i, j \le d_\lambda \} \subset \mathbb{F} S_n.
  \]
We write $N$ for the sum of the subspaces $N^\lambda$ over all $\lambda \vdash n$.
\end{definition}

\begin{corollary}
For each $\lambda \vdash n$, the subspace $N^\lambda$ is a subalgebra of $\mathbb{F} S_n$.
\end{corollary}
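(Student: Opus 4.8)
The plan is to show that $N^\lambda$ is closed under multiplication; since it is defined as a span, closure under products of the spanning elements suffices, and the fact that it is a subspace containing scalar multiples is automatic. The spanning elements are the $E^\lambda_i s^\lambda_{ij}$ for $1 \le i,j \le d_\lambda$, so the entire content of the corollary is the single identity already proved in Proposition \ref{almostmatrixunits}: namely $(E_i s_{ij})(E_k s_{k\ell}) = \esgn_{jk} E_i s_{i\ell}$. The right-hand side is a scalar multiple of $E_i s_{i\ell}$, which is again one of the spanning elements (with $i,\ell$ both in the range $1,\dots,d_\lambda$), hence lies in $N^\lambda$.

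Concretely, I would argue as follows. Take two arbitrary elements $x, y \in N^\lambda$. Writing $x = \sum_{i,j} a_{ij} E_i s_{ij}$ and $y = \sum_{k,\ell} b_{k\ell} E_k s_{k\ell}$ with $a_{ij}, b_{k\ell} \in \mathbb{F}$, bilinearity of multiplication in $\mathbb{F} S_n$ gives
\[
xy = \sum_{i,j,k,\ell} a_{ij} b_{k\ell} \, (E_i s_{ij})(E_k s_{k\ell}) = \sum_{i,j,k,\ell} a_{ij} b_{k\ell} \, \esgn_{jk} \, E_i s_{i\ell},
\]
using Proposition \ref{almostmatrixunits} in the second step. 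Every term on the right is a scalar multiple of some $E_i s_{i\ell}$ with $1 \le i, \ell \le d_\lambda$, so $xy \in N^\lambda$. This shows $N^\lambda$ is closed under multiplication, and since it is by definition a linear subspace of the associative algebra $\mathbb{F} S_n$, it is a subalgebra.

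Honestly, there is no real obstacle here: the corollary is a direct packaging of Proposition \ref{almostmatrixunits}, and the only thing to be careful about is the bookkeeping of indices — making sure that the product of two spanning elements is again expressed in terms of the \emph{same} spanning set, which it is because the output index pair $(i,\ell)$ stays within $\{1,\dots,d_\lambda\}^2$. One might optionally remark that $N^\lambda$ need not contain the identity $\iota$ of $\mathbb{F} S_n$, so ``subalgebra'' here is meant in the non-unital sense (though in fact $N^\lambda$ does have its own identity element, a fact that will presumably be developed in the sequel when $\psi$ is constructed); but strictly for the statement as given, the bilinear computation above is the whole proof.
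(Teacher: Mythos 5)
Your proof is correct and is exactly the argument the paper intends (the paper leaves this corollary unproved as an immediate consequence of Proposition \ref{almostmatrixunits}); your bilinearity computation is precisely the missing bookkeeping. The optional remark about the non-unital sense of ``subalgebra'' is a reasonable aside, though in fact $\alpha^\lambda\big((\mathcal{E}^\lambda)^{-1}\big) = \sum_j U^\lambda_{jj}$ will turn out to be a two-sided identity for $N^\lambda$ once the matrix-unit relations are established in \S\ref{matrixunits}.
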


We fix a partition $\lambda \vdash n$ with standard tableaux $T_1, \dots, T_{d_\lambda}$ in lex order.
Let $A = ( a_{ij} )$ be any $d_\lambda \times d_\lambda$ matrix over $\mathbb{F}$, and consider
the group algebra element
  \begin{equation}
  \label{defalpha}
  \alpha^\lambda(A) = \sum_{i=1}^{d_\lambda} \sum_{j=1}^{d_\lambda} a_{ij} E_i s_{ij}.
  \end{equation}
As usual, we write $E_{ij}$ for the $d_\lambda \times d_\lambda$ matrix with 1 in position $(i,j)$ and 0 elsewhere.

\begin{lemma} \label{lemmaE}
For all partitions $\lambda \vdash n$ and all $i,j,k,\ell \in \{1,\dots,d_\lambda\}$ we have
  \[
  \alpha^\lambda(E_{ij}) \alpha^\lambda(E_{k\ell}) = \alpha^\lambda( E_{ij} \Epsilon^\lambda E_{k\ell} ).
  \]
\end{lemma}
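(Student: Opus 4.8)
The plan is to reduce the identity to the single-index case already handled by Proposition~\ref{almostmatrixunits}. Observe that $\alpha^\lambda(E_{ij}) = E_i s_{ij}$ directly from the definition \eqref{defalpha}, since $E_{ij}$ has a single nonzero entry. So the left-hand side is $(E_i s_{ij})(E_k s_{k\ell})$, which Proposition~\ref{almostmatrixunits} evaluates to $\esgn_{jk}\, E_i s_{i\ell} = \esgn_{jk}\,\alpha^\lambda(E_{i\ell})$. Thus it remains to show that the right-hand side, $\alpha^\lambda(E_{ij}\Epsilon^\lambda E_{k\ell})$, also equals $\esgn_{jk}\,\alpha^\lambda(E_{i\ell})$.

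For that, I would compute the matrix $E_{ij}\Epsilon^\lambda E_{k\ell}$ explicitly. Writing $\Epsilon^\lambda = (\esgn_{rs})$, the product $E_{ij}\Epsilon^\lambda$ picks out row $j$ of $\Epsilon^\lambda$ and places it in row $i$; then multiplying on the right by $E_{k\ell}$ picks out column $k$ of that and places it in column $\ell$. The net effect is the matrix with entry $\esgn_{jk}$ in position $(i,\ell)$ and zeros elsewhere, i.e. $E_{ij}\Epsilon^\lambda E_{k\ell} = \esgn_{jk}\, E_{i\ell}$. Applying $\alpha^\lambda$, which is linear in the matrix argument (again immediate from \eqref{defalpha}), gives $\alpha^\lambda(E_{ij}\Epsilon^\lambda E_{k\ell}) = \esgn_{jk}\,\alpha^\lambda(E_{i\ell}) = \esgn_{jk}\, E_i s_{i\ell}$, matching the left-hand side.

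There is no real obstacle here: the argument is just bookkeeping with elementary matrices plus one invocation of Proposition~\ref{almostmatrixunits}. The only point requiring a moment's care is confirming that $\alpha^\lambda$ is $\mathbb{F}$-linear in its argument (so that it suffices to verify the claim on the basis $\{E_{ij}\}$ in the first place, and so that it commutes with the scalar $\esgn_{jk}$), but this is transparent from the formula \eqref{defalpha}. I would present the proof in essentially two lines: evaluate the left side via Proposition~\ref{almostmatrixunits}, evaluate $E_{ij}\Epsilon^\lambda E_{k\ell} = \esgn_{jk}E_{i\ell}$ by inspection, and conclude.
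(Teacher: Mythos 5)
Your proof is correct and follows the same route as the paper's: apply Proposition~\ref{almostmatrixunits} to compute the left side as $\esgn_{jk}E_i s_{i\ell}$, observe $E_{ij}\Epsilon^\lambda E_{k\ell}=\esgn_{jk}E_{i\ell}$, and use linearity of $\alpha^\lambda$. The paper compresses this into one line; you have merely spelled out the elementary-matrix bookkeeping the paper leaves implicit.
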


\begin{proof}
We have
  $
  \alpha^\lambda(E_{ij}) \alpha^\lambda(E_{k\ell})
  =
  E_i s_{ij} E_k s_{k\ell}
  =
  \esgn_{jk} E_i s_{i\ell}
  =
  \alpha^\lambda( E_{ij} \Epsilon^\lambda E_{k\ell} )
  $,
using Proposition \ref{almostmatrixunits}.
\end{proof}

\begin{proposition} \label{proplinind}
The set $\{ E^\mu_i s^\mu_{ij} \mid \mu \vdash n, 1 \le i, j \le d_\mu \}$ is linearly independent.
\end{proposition}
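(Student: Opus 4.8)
The plan is to prove linear independence directly, which then automatically shows that the set is a basis: its cardinality is $\sum_{\mu\vdash n} d_\mu^2 = |S_n| = n! = \dim_{\mathbb{F}}\mathbb{F} S_n$. The heart of the argument is the following claim about the linear map $\alpha^\lambda\colon M_{d_\lambda}(\mathbb{F})\to\mathbb{F} S_n$ of \eqref{defalpha}: \emph{for each fixed $\lambda\vdash n$, the map $\alpha^\lambda$ is injective.} Granting this, the general statement follows quickly from the orthogonality relations $D^\lambda_k D^\mu_i=0$ for $\lambda\ne\mu$.

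To prove the claim, I would twist $\alpha^\lambda$ into a genuine algebra homomorphism. Since $\Epsilon^\lambda$ is invertible (Lemma \ref{E=I+N.lemma}), define $\gamma^\lambda\colon M_{d_\lambda}(\mathbb{F})\to\mathbb{F} S_n$ by $\gamma^\lambda(A)=\alpha^\lambda\big((\Epsilon^\lambda)^{-1}A\big)$. Extending Lemma \ref{lemmaE} bilinearly gives $\alpha^\lambda(X)\alpha^\lambda(Y)=\alpha^\lambda(X\Epsilon^\lambda Y)$ for all $X,Y\in M_{d_\lambda}(\mathbb{F})$, and hence
\[
\gamma^\lambda(A)\gamma^\lambda(B)
=\alpha^\lambda\big((\Epsilon^\lambda)^{-1}A\,\Epsilon^\lambda\,(\Epsilon^\lambda)^{-1}B\big)
=\alpha^\lambda\big((\Epsilon^\lambda)^{-1}AB\big)
=\gamma^\lambda(AB),
\]
so $\gamma^\lambda$ is an algebra homomorphism. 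It is nonzero, because $\alpha^\lambda$ is nonzero ($\alpha^\lambda(E_{11})=E^\lambda_1 s^\lambda_{11}=E^\lambda_1\ne 0$, the coefficient of $\iota$ in $D^\lambda_1=H_{T_1}V_{T_1}$ being $1$) and precomposition with the invertible map $A\mapsto(\Epsilon^\lambda)^{-1}A$ does not kill it. Therefore $\ker\gamma^\lambda$ is a two-sided ideal of the simple algebra $M_{d_\lambda}(\mathbb{F})$, hence is zero, so $\gamma^\lambda$ is injective. Since $\alpha^\lambda(A)=\gamma^\lambda(\Epsilon^\lambda A)$ and $\Epsilon^\lambda$ is invertible, $\alpha^\lambda$ is injective as well, proving the claim.

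Now suppose $\sum_{\mu\vdash n}\alpha^\mu(A^\mu)=0$ with $A^\mu\in M_{d_\mu}(\mathbb{F})$; I must show each $A^\mu=0$. Fix $\lambda$ and $k\in\{1,\dots,d_\lambda\}$, and left-multiply the relation by $E^\lambda_k=\alpha^\lambda(E_{kk})$. For each $\mu\ne\lambda$ the summand $E^\lambda_k\,\alpha^\mu(A^\mu)$ vanishes: every term $E^\lambda_k E^\mu_i s^\mu_{ij}$ is a scalar multiple of $D^\lambda_k D^\mu_i\, s^\mu_{ij}=0$ by Proposition \ref{prop0product}. Only the $\mu=\lambda$ term survives, and by Lemma \ref{lemmaE} it equals $\alpha^\lambda(E_{kk})\alpha^\lambda(A^\lambda)=\alpha^\lambda(E_{kk}\Epsilon^\lambda A^\lambda)$. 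Hence $\alpha^\lambda(E_{kk}\Epsilon^\lambda A^\lambda)=0$, and by the claim $E_{kk}\Epsilon^\lambda A^\lambda=0$, i.e.\ the $k$-th row of $\Epsilon^\lambda A^\lambda$ is zero. Letting $k$ range over $1,\dots,d_\lambda$ gives $\Epsilon^\lambda A^\lambda=0$, so $A^\lambda=0$; since $\lambda$ was arbitrary, all the coefficients vanish.

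The main obstacle is the claim: one must recognize that $\alpha^\lambda$ is, up to the invertible twist by $(\Epsilon^\lambda)^{-1}$ supplied by Lemma \ref{lemmaE}, an algebra homomorphism out of the simple algebra $M_{d_\lambda}(\mathbb{F})$, which forces injectivity. Everything else is routine bookkeeping with the block-orthogonality $D^\lambda D^\mu=0$ and the multiplication rule of Lemma \ref{lemmaE}.
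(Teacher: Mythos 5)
Your proof is correct, and for the key step it takes a genuinely different route from the paper's. The paper isolates a single coefficient $a^\lambda_{ij}$ by sandwiching the relation $\sum_\mu \alpha^\mu(A^\mu)=0$ between $\alpha^\lambda\big(E_{ii}(\Epsilon^\lambda)^{-1}\big)$ on the left and $\alpha^\lambda\big((\Epsilon^\lambda)^{-1}E_{jj}\big)$ on the right; the cross-partition terms vanish by Proposition \ref{prop0product}, and then Lemma \ref{lemmaE} collapses the surviving $\lambda$-term to $\alpha^\lambda(a^\lambda_{ij}E_{ij})=0$, so $a^\lambda_{ij}E_i s_{ij}=0$, and the paper quietly invokes $E_i s_{ij}\ne 0$ to finish. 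You instead first establish the stronger structural statement that each $\alpha^\lambda$ is injective, by observing that the twist $\gamma^\lambda(A)=\alpha^\lambda\big((\Epsilon^\lambda)^{-1}A\big)$ is a nonzero algebra homomorphism out of the simple algebra $M_{d_\lambda}(\mathbb{F})$ and hence has zero kernel; then a one-sided multiplication by $E^\lambda_k=\alpha^\lambda(E_{kk})$ (rather than a two-sided sandwich) plus the same orthogonality reduces to that injectivity. Both arguments rest on the same pillars --- invertibility of $\Epsilon^\lambda$ from Lemma \ref{E=I+N.lemma}, the twisted multiplication rule of Lemma \ref{lemmaE}, and the block orthogonality of Proposition \ref{prop0product} --- but your appeal to simplicity replaces the paper's explicit entrywise extraction, and as a bonus you make explicit (via the nonvanishing of the coefficient of $\iota$ in $D^\lambda_1$) the step the paper leaves tacit when it passes from $a^\lambda_{ij}E_i s_{ij}=0$ to $a^\lambda_{ij}=0$.
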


\begin{proof}
A linear dependence relation among the $E^\mu_i s^\mu_{ij}$ can be written as
  \[
  \sum_{\mu \vdash n} \alpha^\mu( A^\mu ) = 0.
  \]
We fix a partition $\lambda$, and obtain
  \[
  \alpha^\lambda \big( E_{ii} ( \Epsilon^\lambda )^{-1} \big)
  \bigg[
  \sum_{\mu \vdash n} \alpha^\mu( A^\mu )
  \bigg]
  \alpha^\lambda \big( ( \Epsilon^\lambda )^{-1} E_{jj} \big)
  = 0.
  \]
Using equation \eqref{sd=ds}, Definition \ref{defscalar}, and Proposition \ref{prop0product},
we see that all terms vanish except for $\mu = \lambda$:
  \[
  \alpha^\lambda \big( E_{ii} ( \Epsilon^\lambda )^{-1} \big)
  \alpha^\lambda( A^\lambda )
  \alpha^\lambda \big( ( \Epsilon^\lambda )^{-1} E_{jj} \big)
  = 0.
  \]
Lemma \ref{lemmaE} gives
  $\alpha^\lambda (
  E_{ii} ( \Epsilon^\lambda )^{-1}
  \Epsilon^\lambda
  A^\lambda
  \Epsilon^\lambda
  ( \Epsilon^\lambda )^{-1} E_{jj}
  )
  = 0$,
hence $\alpha^\lambda ( E_{ii} A^\lambda E_{jj} ) = 0$ \and $\alpha^\lambda ( a^\lambda_{ij} E_{ij} ) = 0$,
and so $a^\lambda_{ij} E_i s_{ij} = 0$.
Thus $a^\lambda_{ij} = 0$ for all $\lambda$ and all $i, j$.
\end{proof}

\begin{definition}
Suppose that $n$ has $r$ distinct partitions $\lambda_1, \dots, \lambda_r$ in lex order.
For $i = 1, \dots, r$ let $d_i = d_{\lambda_i}$ be the number of standard tableaux of shape $\lambda_i$.
Consider the direct sum of full matrix algebras
  \[
  M = \bigoplus_{i=1}^r M_{d_i}(\mathbb{F}).
  \]
The linear map $\alpha\colon M \to \mathbb{F} S_n$ is the direct sum of the $\alpha^i = \alpha^{\lambda_i}$
from equation \eqref{defalpha}:
  \[
  \alpha( A_1, \dots, A_r ) = \alpha^1(A_1) + \cdots + \alpha^r(A_r).
  \]
\end{definition}

\begin{corollary}
\label{cordl=fi}
The map $\alpha$ is injective.
For every $\lambda \vdash n$ and $1 \le i, j \le d_\lambda$, we have $\dim N^\lambda = d_\lambda^{\,2}$.
The sum $N$ of the $N^\lambda$ is direct, and hence $\dim N = \sum_\lambda d_\lambda^2$.
\end{corollary}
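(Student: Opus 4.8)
The plan is to derive all three assertions directly from Proposition~\ref{proplinind}, since that linear independence statement already contains the entire substance; what remains is only to unwind the definitions of $\alpha$, $\alpha^\lambda$, and $N^\lambda$. (The real work was done earlier, in the invertibility of $\Epsilon^\lambda$ from Lemma~\ref{E=I+N.lemma} and the multiplicativity in Lemma~\ref{lemmaE}.)

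First I would prove that $\alpha$ is injective. Suppose $\alpha(A_1,\dots,A_r)=0$. By definition this says $\sum_{\mu\vdash n}\alpha^\mu(A^\mu)=0$, that is, $\sum_{\mu\vdash n}\sum_{i,j}a^\mu_{ij}\,E^\mu_i s^\mu_{ij}=0$, which is a linear dependence relation among the elements $E^\mu_i s^\mu_{ij}$. Proposition~\ref{proplinind} forces every coefficient $a^\mu_{ij}$ to vanish, hence $(A_1,\dots,A_r)=0$, so $\alpha$ is injective.

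Next I would fix a single $\lambda\vdash n$. Feeding zeros into all slots other than the one for $\lambda$ shows that $\alpha^\lambda$ is itself injective. Its image is, by the very definitions, $\alpha^\lambda\big(M_{d_\lambda}(\mathbb{F})\big)=\mathrm{span}\{E^\lambda_i s^\lambda_{ij}\mid 1\le i,j\le d_\lambda\}=N^\lambda$, so $\dim N^\lambda=\dim M_{d_\lambda}(\mathbb{F})=d_\lambda^{\,2}$. Finally, for the directness of $N=\sum_\lambda N^\lambda$: given $x_\lambda\in N^\lambda$ with $\sum_\lambda x_\lambda=0$, write $x_\lambda=\alpha^\lambda(A_\lambda)$ for suitable matrices $A_\lambda$; then $\alpha(A_1,\dots,A_r)=0$, so injectivity of $\alpha$ gives each $A_\lambda=0$, hence each $x_\lambda=0$. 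Therefore the sum is direct, and $\dim N=\sum_\lambda\dim N^\lambda=\sum_\lambda d_\lambda^{\,2}$.

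I do not expect any genuine obstacle here: everything reduces to Proposition~\ref{proplinind}. The only point deserving a moment's care is the identification of the image of $\alpha^\lambda$ with $N^\lambda$, which is immediate once one compares the definition of $\alpha^\lambda$ in \eqref{defalpha} with the definition of $N^\lambda$ as the span of the $E^\lambda_i s^\lambda_{ij}$.
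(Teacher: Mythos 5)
Your proposal is correct, and the first two assertions are argued exactly as the paper does: injectivity of $\alpha$ is a restatement of Proposition~\ref{proplinind}, and $\dim N^\lambda = d_\lambda^{\,2}$ because the spanning set $\{E^\lambda_i s^\lambda_{ij}\}$ is linearly independent and hence a basis. The one point where you diverge from the paper is the directness of $N = \sum_\lambda N^\lambda$: the paper attributes this to Proposition~\ref{prop0product} (the orthogonality $D^\lambda_i D^\mu_j = 0$ for $\lambda \neq \mu$), whereas you derive it as an immediate consequence of the injectivity of $\alpha$, observing that $N^\lambda = \alpha^\lambda\big(M_{d_\lambda}(\mathbb{F})\big)$ and that a relation $\sum_\lambda x_\lambda = 0$ becomes $\alpha(A_1,\dots,A_r) = 0$. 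Your route is logically tighter and self-contained: it reduces all three claims to Proposition~\ref{proplinind} alone, whereas the paper's appeal to Proposition~\ref{prop0product} really just repeats (in compressed form) the device already used inside the proof of Proposition~\ref{proplinind}, where orthogonality of distinct partitions was what made it possible to isolate the $\mu = \lambda$ term. Since Proposition~\ref{proplinind} itself rests on Proposition~\ref{prop0product}, the two proofs ultimately invoke the same underlying facts; yours simply packages the dependency more cleanly.
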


\begin{proof}
Injectivity of $\alpha$ is equivalent to the linear independence in Proposition \ref{proplinind}.
Since linear independence holds for each $\lambda$, the spanning set for $N^\lambda$ is also a basis.
The sum of the $N^\lambda$ is direct by Proposition \ref{prop0product}.
\end{proof}

Since $N \subseteq \mathbb{F} S_n$, it follows that $\sum_\lambda d_\lambda^2 \le n!$,
so to prove $N = \mathbb{F} S_n$, it remains to show equality.
Algorithms for insertion or deletion of a number to or from a standard tableau provide
a bijection between $S_n$ and the set of ordered pairs of standard tableaux of the same shape.
For details, see \cite[\S 5.1.4, Theorem A]{Knuth1998}.


\subsection{Matrix units in the group algebra} \label{matrixunits}

We prove that the map $\psi$ in \eqref{iso} is an isomorphism by constructing elements of $\mathbb{F}S_n$ 
corresponding to matrix units.

\begin{remark}
The linear map $\alpha^\lambda\colon M_{d_\lambda}(\mathbb{F}) \to \mathbb{F} S_n$ is not in general 
an algebra homomorphism.
However, we can easily obtain an algebra homomorphism from it.
\end{remark}

\begin{definition} \label{defU}
For all $\lambda \vdash n$ and $1 \le i, j \le d_\lambda$, we define the following elements:
  \[
  U^\lambda_{ij} = \alpha^\lambda \big( E^\lambda_{ij} (\mathcal{E}^\lambda)^{-1} \big) \in \mathbb{F} S_n.
  \]
\end{definition}

\begin{proposition}
For all $\lambda, \mu \vdash n$, $1 \le i, j \le d_\lambda$, $1 \le k, \ell \le d_\mu$ we have
  \[
  U^\lambda_{ij} U^\mu_{k\ell} = \delta_{\lambda\mu} \delta_{jk} U^\lambda_{i\ell}.
  \]
\end{proposition}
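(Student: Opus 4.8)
The plan is to reduce everything to the multiplication rule for the elements $E^\mu_i s^\mu_{ij}$ and the matrix $\Epsilon^\lambda$, using the linearity of the maps $\alpha^\lambda$. First I would handle the case $\lambda \ne \mu$: since $U^\lambda_{ij}$ lies in $N^\lambda$ and $U^\mu_{k\ell}$ lies in $N^\mu$, and $N^\lambda N^\mu = 0$ for $\lambda \ne \mu$ by Proposition \ref{prop0product} (each $E^\lambda_a s^\lambda_{ab}$ is a left multiple of $s^\lambda$-conjugates of $D^\lambda_a = H V$, hence a product $(\cdots)D^\lambda_a(\cdots)(\cdots)D^\mu_c(\cdots) = 0$), the product $U^\lambda_{ij} U^\mu_{k\ell}$ vanishes, matching $\delta_{\lambda\mu} = 0$. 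So from here on I fix $\lambda = \mu$ and drop the superscript.

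The core computation is for fixed $\lambda$. By Definition \ref{defU} and bilinearity of multiplication,
\[
U_{ij} U_{k\ell}
=
\alpha^\lambda\big( E_{ij}(\Epsilon^\lambda)^{-1}\big)\,\alpha^\lambda\big( E_{k\ell}(\Epsilon^\lambda)^{-1}\big).
\]
Now $E_{ij}(\Epsilon^\lambda)^{-1}$ and $E_{k\ell}(\Epsilon^\lambda)^{-1}$ are arbitrary matrices, so I cannot apply Lemma \ref{lemmaE} directly to the letters $E_{ij}, E_{k\ell}$; instead I expand each into a linear combination of the matrix units $E_{ab}$, apply $\alpha^\lambda$ termwise, use Lemma \ref{lemmaE} on each pair of matrix-unit terms, and reassemble. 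Concretely, for any matrices $A, B$ one gets $\alpha^\lambda(A)\alpha^\lambda(B) = \alpha^\lambda(A\,\Epsilon^\lambda B)$ by writing $A = \sum_{a,b} A_{ab}E_{ab}$, $B = \sum_{c,d} B_{cd}E_{cd}$ and invoking Lemma \ref{lemmaE} for each $(a,b,c,d)$, since $E_{ab}\Epsilon^\lambda E_{cd} = (\Epsilon^\lambda)_{bc} E_{ad}$. Applying this identity with $A = E_{ij}(\Epsilon^\lambda)^{-1}$ and $B = E_{k\ell}(\Epsilon^\lambda)^{-1}$ yields
\[
U_{ij} U_{k\ell}
=
\alpha^\lambda\big( E_{ij}(\Epsilon^\lambda)^{-1}\,\Epsilon^\lambda\, E_{k\ell}(\Epsilon^\lambda)^{-1}\big)
=
\alpha^\lambda\big( E_{ij} E_{k\ell}(\Epsilon^\lambda)^{-1}\big)
=
\delta_{jk}\,\alpha^\lambda\big( E_{i\ell}(\Epsilon^\lambda)^{-1}\big)
=
\delta_{jk}\, U_{i\ell},
\]
using the matrix-unit relation $E_{ij}E_{k\ell} = \delta_{jk}E_{i\ell}$ and the invertibility of $\Epsilon^\lambda$ from Lemma \ref{E=I+N.lemma}. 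Combining with the $\lambda \ne \mu$ case gives the claimed formula.

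I expect the main obstacle to be purely presentational rather than conceptual: one has to state cleanly the auxiliary identity $\alpha^\lambda(A)\alpha^\lambda(B) = \alpha^\lambda(A\,\Epsilon^\lambda B)$ for general matrices $A, B$ (a one-line consequence of Lemma \ref{lemmaE} by bilinearity, but worth isolating), and make sure the cancellation $(\Epsilon^\lambda)^{-1}\Epsilon^\lambda = I^\lambda$ is legitimate — which it is, by Lemma \ref{E=I+N.lemma}. The cross-partition vanishing should be dispatched in one sentence by citing Proposition \ref{prop0product} together with the observation that every generator $E^\nu_a s^\nu_{ab}$ of $N^\nu$ is a scalar multiple of a group-algebra element lying in $\mathbb{F}S_n\, D^\nu_a\, \mathbb{F}S_n$, so products across distinct $\nu$ kill a factor $D^\lambda D^\mu = 0$.
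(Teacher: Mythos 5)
Your proof is correct and follows essentially the same route as the paper: reduce the $\lambda=\mu$ case to $\alpha^\lambda(A)\alpha^\lambda(B)=\alpha^\lambda(A\,\Epsilon^\lambda B)$, cancel $(\Epsilon^\lambda)^{-1}\Epsilon^\lambda$, use the matrix-unit relation, and dispatch $\lambda\ne\mu$ via Proposition~\ref{prop0product}. The one place you add value is in explicitly noting that Lemma~\ref{lemmaE} must be extended by bilinearity from matrix units to arbitrary matrices before it can be applied to $A=E_{ij}(\Epsilon^\lambda)^{-1}$, a step the paper performs silently.
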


\begin{proof}
If $\lambda = \mu$ then
  \begin{align*}
  U_{ij} U_{k\ell}
  &=
  \alpha( E_{ij} \mathcal{E}^{-1} ) \alpha( E_{k\ell} \mathcal{E}^{-1} )
  =
  \alpha( E_{ij} \mathcal{E}^{-1} \mathcal{E} E_{k\ell} \mathcal{E}^{-1} )
  =
  \alpha( E_{ij} E_{k\ell} \mathcal{E}^{-1} )
  \\
  &=
  \alpha( \delta_{jk} E_{i\ell} \mathcal{E}^{-1} )
  =
  \delta_{jk} \alpha( E_{i\ell} \mathcal{E}^{-1} )
  =
  \delta_{jk} U_{i\ell}.
  \end{align*}
The factor $\delta_{\lambda\mu}$ comes from the orthogonality of Proposition \ref{prop0product}.
\end{proof}

\begin{definition} \label{defpsi}
We define the linear map $\psi\colon M \to \mathbb{F} S_n$ on matrix units as
  \[
  \psi( E^\lambda_{ij} ) = U^\lambda_{ij} \qquad (\lambda \vdash n; \, 1 \le i, j \le d_\lambda).
  \]
\end{definition}

\begin{theorem} \label{phi-inverse}
The map $\psi\colon M \to \mathbb{F} S_n$ is an isomorphism of associative algebras.
In particular, $M_{d_i}(\mathbb{F})$ is isomorphic to $N^{\lambda_i}$.
\end{theorem}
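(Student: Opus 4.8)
The plan is to establish three things in turn: that $\psi$ is an algebra homomorphism, that it is injective, and that it is surjective. The homomorphism property on basis elements is already in hand: by the preceding proposition, $\psi(E^\lambda_{ij})\psi(E^\mu_{k\ell}) = U^\lambda_{ij}U^\mu_{k\ell} = \delta_{\lambda\mu}\delta_{jk}U^\lambda_{i\ell} = \psi(E^\lambda_{ij}E^\mu_{k\ell})$, and since the $E^\lambda_{ij}$ form a basis of $M$ and multiplication is bilinear, $\psi$ respects products in general. So the real content is the bijectivity.

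For injectivity I would argue that $\psi$ has the same image, subspace by subspace, as the map $\alpha$. Indeed $U^\lambda_{ij} = \alpha^\lambda(E^\lambda_{ij}(\mathcal{E}^\lambda)^{-1})$, and since $\mathcal{E}^\lambda$ is invertible (Lemma \ref{E=I+N.lemma}), as $(i,j)$ ranges over all pairs the matrices $E^\lambda_{ij}(\mathcal{E}^\lambda)^{-1}$ range over a basis of $M_{d_\lambda}(\mathbb{F})$; hence $\psi$ restricted to the $\lambda$-summand is $\alpha^\lambda$ precomposed with an invertible linear map, so $\psi(M^\lambda) = \alpha^\lambda(M_{d_\lambda}(\mathbb{F})) = N^\lambda$ and $\psi|_{M^\lambda}$ is injective. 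Because the $N^\lambda$ sum directly (Corollary \ref{cordl=fi}) and $\psi$ sends the $\lambda$-summand into $N^\lambda$, injectivity of $\psi$ on all of $M$ follows from injectivity on each summand. This also identifies $\psi(M) = N$ and, more precisely, $\psi(M_{d_i}(\mathbb{F})) = N^{\lambda_i}$, giving the last sentence of the theorem once we know $\psi$ is an isomorphism onto its image.

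Surjectivity is the one step that needs something genuinely new, namely the dimension count $\sum_\lambda d_\lambda^2 = n!$. By Corollary \ref{cordl=fi} we have $\dim N = \sum_\lambda d_\lambda^2$, and $N \subseteq \mathbb{F}S_n$ forces $\sum_\lambda d_\lambda^2 \le n!$. For the reverse inequality I would invoke the combinatorial fact quoted just before this subsection: the RSK-type insertion/deletion algorithms give a bijection between $S_n$ and the set of ordered pairs of standard tableaux of the same shape, so $n! = \sum_\lambda d_\lambda^2$ (citing \cite[\S 5.1.4, Theorem A]{Knuth1998}). Hence $\dim N = n! = \dim \mathbb{F}S_n$, and since $\psi(M) = N \subseteq \mathbb{F}S_n$ is a subspace of full dimension, $\psi$ is surjective. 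Combined with injectivity and the homomorphism property, $\psi$ is an algebra isomorphism $M \to \mathbb{F}S_n$; restricting to the $i$-th summand gives the isomorphism $M_{d_i}(\mathbb{F}) \cong N^{\lambda_i}$.

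The main obstacle is conceptual rather than computational: everything hinges on the Wedderburn dimension identity $\sum_\lambda d_\lambda^2 = n!$, which is exactly what the RSK bijection supplies and which we are importing rather than proving. Once that is granted, the argument is a short assembly of results already proved — the homomorphism property from the previous proposition, injectivity from Proposition \ref{proplinind} (equivalently, injectivity of $\alpha$), and directness of the sum from Proposition \ref{prop0product}. I expect no surprises beyond being careful that $\psi$ maps each $M^{\lambda}$ precisely onto $N^\lambda$, which is what lets the summand-wise injectivity globalize.
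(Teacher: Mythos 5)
Your proof is correct and is precisely the argument the paper leaves implicit: the paper's own proof is the one-liner ``This is an immediate corollary of the preceding results,'' and you have simply unpacked which results it means and how they combine --- the matrix-unit relations for the $U^\lambda_{ij}$ give the homomorphism property, injectivity of $\alpha$ plus invertibility of $\mathcal{E}^\lambda$ give injectivity of $\psi$ on each summand (and the direct-sum decomposition globalizes this), and the RSK dimension count $\sum_\lambda d_\lambda^2 = n!$ cited just before the subsection gives surjectivity. Nothing is missing and no step deviates from the intended route.
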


\begin{proof}
This is an immediate corollary of the preceding results.
\end{proof}

\begin{remark} \label{charpremark}
Since the direct sum $M$ of full matrix algebras is clearly semisimple, and simplicity is
preserved by isomorphism, it follows that $\mathbb{F} S_n$ is semisimple, and moreover that it
splits over $\mathbb{F}$: the structure theory
of semisimple associative algebras implies that $\mathbb{F} S_n$ is isomorphic to the direct sum of simple
two-sided ideals, and that each simple ideal is isomorphic to the endomorphism algebra of a vector space over
a division ring $\mathbb{D}$ over $\mathbb{F}$.
But our results show that $\mathbb{D} = \mathbb{F}$ for every $\lambda$.
Since the scalar factors $d_\lambda/n!$ in Definition \ref{defscalar} are defined in characteristic $> n$,
we also obtain the semisimplicity of $\mathbb{F} S_n$ in this case.
\end{remark}

\begin{example} \label{example.n=3}
For $n = 3$ we take the permutations 123, 132, 213, 231, 312, 321 
in lex order -- writing $p$ as $p(1)p(2)p(3)$ -- as our basis of $\mathbb{F} S_3$.
The partitions $\lambda = 3$, $\mu = 21$, $\nu = 111$ have the following standard tableaux:
  \[
  T^\lambda_1 = \begin{array}{cc} \young(123) \end{array} \qquad\quad
  T^\mu_1 =\begin{array}{cc} \young(12,3) \end{array} \qquad\quad
  T^\mu_2 = \begin{array}{cc} \young(13,2) \end{array} \qquad\quad
  T^\nu_1 = \begin{array}{cc} \young(1,2,3) \end{array}
  \]
Thus $d_\lambda = 1$, $d_\mu = 2$, $d_\nu = 1$ and hence we have the isomorphism
  \[
  \psi\colon M = \mathbb{F} \oplus M_2(\mathbb{F}) \oplus \mathbb{F} \longrightarrow \mathbb{F} S_3.
  \]
As ordered basis of $M$ we take the matrix units 
$E^\lambda_{11}$, $E^\mu_{11}$, $E^\mu_{12}$, $E^\mu_{21}$, $E^\mu_{22}$, $E^\nu_{11}$.
We will compute the corresponding elements $U^\rho_{ij}$ of $\mathbb{F}S_n$.
The groups of horizontal and vertical permutations are as follows:
  \begin{alignat*}{2}
  &G_H(T^\lambda_1) = S_3 &\qquad\qquad &G_V(T^\lambda_1) = \{ 123 \}
  \\
  &G_H(T^\mu_1) = \{ 123, 213 \} &\qquad\qquad &G_V(T^\mu_1) = \{ 123, 321 \}
  \\
  &G_H(T^\mu_2) = \{ 123, 321 \} &\qquad\qquad &G_V(T^\mu_2) = \{ 123, 213 \}
  \\
  &G_H(T^\nu_1) = \{ 123 \} &\qquad\qquad &G_V(T^\nu_1) = S_3.
  \end{alignat*}
The symmetric and alternating sums over these subgroups are as follows:
  \[
  \begin{array}{lcr}
  \multicolumn{2}{l}{H_{T^\lambda_1} = 123 + 132 + 213 + 231 + 312 + 321} & V_{T^\lambda_1} = 123
  \\[3pt]
  H_{T^\mu_1} = 123 + 213 & & V_{T^\mu_1} = 123 - 321
  \\[3pt]
  H_{T^\mu_2} = 123 + 321 & & V_{T^\mu_2} = 123 - 213
  \\[3pt]
  H_{T^\nu_1} = 123 &\multicolumn{2}{r}{V_{T^\nu_1} = 123 - 132 - 213 + 231 + 312 - 321}
  \end{array}
  \]
The products $D^\rho_{ij}$ are easily calculated; and scaling gives the idempotents:
  \begin{align*}
  E^\lambda_1 &= \tfrac16 ( 123 + 132 + 213 + 231 + 312 + 321 )
  \\
  E^\mu_1 &= \tfrac13 ( 123 + 213 - 312 - 321 ),
  \qquad
  E^\mu_2 = \tfrac13 ( 123 - 213 - 231 + 321 )
  \\
  E^\nu_1 &= \tfrac16 ( 123 - 132 - 213 + 231 + 312 - 321 )
  \end{align*}
Clearly $s^\mu_{12} = s^\mu_{21} = 132$, and this is the only non-trivial case. 
Hence $s_{12} \ne vh$ for any $v \in G_V(T^\mu_2)$, $h \in G_H(T^\mu_2)$ (see Lemma \ref{lemmaeiej}),
and so every $\Epsilon^\rho$ is the identity matrix of size $d_\rho$.
Therefore every $U^\rho_{ij} = \alpha^\rho( E_{ij} ) = E^\rho_i s^\rho_{ij}$, 
which gives the following matrix units in the group algebra:
  \begin{align*}
  U^\lambda_{11} &= E^\lambda_1
  \\
  U^\mu_{11} &= E^\mu_1,
  \qquad
  U^\mu_{12} = E^\mu_1 s_{12} = \tfrac13 ( 132 + 231 - 312 - 321 )
  \\
  U^\mu_{21} &= E^\mu_2 s_{21} = \tfrac13 ( 132 - 213 - 231 + 312 ),
  \qquad
  U^\mu_{22} = E^\mu_2
  \\
  U^\nu_{11} &= E^\nu_1
  \end{align*}
These equations can be summarized by the matrix representing $\psi$ with respect to our ordered bases of 
$M$ and $\mathbb{F} S_n$, and then we obtain the matrix representing $\psi^{-1}$:
  \[
  \psi
  \sim
  \frac16 \!
  \left[
  \begin{array}{rrrrrr}
  1 &\!  2 &\!  0 &\!  0 &\!  2 &\!  1 \\
  1 &\!  0 &\!  2 &\!  2 &\!  0 &\! -1 \\
  1 &\!  2 &\!  0 &\! -2 &\! -2 &\! -1 \\
  1 &\!  0 &\!  2 &\! -2 &\! -2 &\!  1 \\
  1 &\! -2 &\! -2 &\!  2 &\!  0 &\!  1 \\
  1 &\! -2 &\! -2 &\!  0 &\!  2 &\! -1
  \end{array}
  \right]
  \qquad
  \psi^{-1}
  \sim
  \!
  \left[
  \begin{array}{rrrrrr}
  1 &\!  1 &\!  1 &\!  1 &\!  1 &\!  1 \\
  1 &\!  0 &\!  1 &\! -1 &\!  0 &\! -1 \\
  0 &\!  1 &\! -1 &\!  1 &\! -1 &\!  0 \\
  0 &\!  1 &\!  0 &\! -1 &\!  1 &\! -1 \\
  1 &\!  0 &\! -1 &\!  0 &\! -1 &\!  1 \\
  1 &\! -1 &\! -1 &\!  1 &\!  1 &\! -1
  \end{array}
  \right]
  \]
For any $X \in \mathbb{F} S_3$, we have
  \[
  \psi^{-1}( X )
  =
  x_1 E^\lambda_{11} + x_2 E^\mu_{11} + x_3 E^\mu_{12} + x_4 E^\mu_{21} + x_5 E^\mu_{22} + x_6 E^\nu_{11}
  =
  \left[ \, x_1, \begin{bmatrix} x_2 &\!\! x_3 \\ x_4 &\!\! x_5 \end{bmatrix}\!\!, \, x_6 \, \right]
  \]
and therefore  
  \begin{alignat*}{2}
  \psi^{-1}( 123 ) &= \left[ \, 1, \begin{bmatrix}  1 &  0 \\  0 &  1 \end{bmatrix}\!,  1 \, \right]
  &\qquad\qquad
  \psi^{-1}( 132 ) &= \left[ \, 1, \begin{bmatrix}  0 &  1 \\  1 &  0 \end{bmatrix}\!, -1 \, \right]
  \\
  \psi^{-1}( 213 ) &= \left[ \, 1, \begin{bmatrix}  1 & -1 \\  0 & -1 \end{bmatrix}\!, -1 \, \right]
  &\qquad\qquad
  \psi^{-1}( 231 ) &= \left[ \, 1, \begin{bmatrix} -1 &  1 \\ -1 & 0 \end{bmatrix}\!,   1 \, \right]
  \\
  \psi^{-1}( 312 ) &= \left[ \, 1, \begin{bmatrix}  0 & -1 \\  1 & -1 \end{bmatrix}\!,  1 \, \right]
  &\qquad\qquad
  \psi^{-1}( 321 ) &= \left[ \, 1, \begin{bmatrix} -1 &  0 \\ -1 &  1 \end{bmatrix}\!, -1 \, \right]
  \end{alignat*}
These are the representation matrices for the irreducible representations of $S_3$.
\end{example}


\subsection{Clifton's theorem on representation matrices}

Our next goal is to compute explicitly the algebra homomorphism $\phi$:
  \[
  \phi\colon \mathbb{F} S_n \longrightarrow \bigoplus_{\lambda \vdash n} M_{d_\lambda}(\mathbb{F})
  \]
We fix $\lambda \vdash n$ throughout the following discussion, and consider all the tableaux $T_1, \dots, T_{n!}$
of shape $\lambda$.
Recall that for $1 \le i, j \le n!$ we define $s_{ij} \in S_n$ by the equation $s_{ij} T_j = T_i$.
If $p \in S_n$ then $p T_j = T_r$ for some $r$, and so $p = s_{rj}$.
As before, we write $E_i$ for the idempotent corresponding to $T_i$.
Proposition \ref{propconj} and Lemma \ref{lemmaeiej} show that 
$E_i E_j = \esgn_{ij} E_i s_{ij} = \esgn_{ij} s_{ij} E_j$.
Therefore
  \[
  E_i p E_j
  =
  E_i s_{rj} E_j
  =
  E_i E_r s_{rj}
  =
  \esgn_{ir} s_{ir} E_r s_{rj}
  =
  \esgn_{ir} s_{ir} s_{rj} E_j
  =
  \esgn_{ir} s_{ij} E_j.
  \]
We define $\esgn_{ij}^p = \esgn_{ir}$ when $p = s_{rj}$, so that for all $i, j, p$ we have
  \begin{equation}
  \label{eipej}
  E_i p E_j = \esgn_{ij}^p s_{ij} E_j.
  \end{equation}
We now restrict to the $d_\lambda$ standard tableaux $T_1, \dots, T_{d_\lambda}$ in lex order.

\begin{definition}
For all $p \in S_n$ the \textbf{Clifton matrix} $A^\lambda_p$ is defined by
  \[
  ( A^\lambda_p )_{ij} = \esgn_{ij}^p \qquad (1 \le i, j \le d_\lambda).
  \]
The matrix previously denoted $\mathcal{E}^\lambda$ is the Clifton matrix $A^\lambda_\iota$ for $\iota \in S_n$.
\end{definition}

Referring to the definition of $\esgn_{ij}$ in Lemma \ref{lemmaeiej}, 
we see that $A^\lambda_p$ can be computed by the following steps, 
presented formally in Figure \ref{cliftonmatrixalgorithm}; 
see \cite{Clifton1982}, \cite{Bergdolt1995}, \cite[Figure 1]{BP2011}:
  \begin{itemize}
  \item
  Apply $p$ to the standard tableau $T_j$ obtaining the (possibly non-standard) tableau $p T_j$.
  \item
  If there exist two numbers that appear together both in a column of $T_i$ and in a row of $p T_j$,
  then $(A^\lambda_p)_{ij} = 0$.
  \item
  Otherwise, there exists a vertical permutation $q \in G_V(T_i)$ which takes the numbers of $T_i$ into 
  the rows they occupy in $p T_j$.
  Then $(A^\lambda_p)_{ij} = \epsilon( q )$.
  \end{itemize}
Figure \ref{cliftonmatrixalgorithm} attempts to find $q$, and returns 0 if no such permutation exists.

Before proving Clifton's theorem, it is worth quoting in its entirety the review in MathSciNet (MR0624907)
by G. D. James of Clifton's paper \cite{Clifton1982}:
``From his natural representation of the symmetric groups, A. Young produced representations known as 
the orthogonal form and the seminormal form and gave a straightforward method of calculating the matrices 
representing permutations.  A disadvantage of these representations is that the matrix entries are not 
in general integers, and for many practical purposes, the natural representation is preferable.  Most methods 
for working out the matrices for the natural representation are messy, but this paper gives an approach which 
is simple both to prove and to apply.  Let $T_1, T_2, \dots, T_f$ be the standard tableaux.  For each $\pi \in S_n$, 
form the $f \times f$ matrix $A_\pi$ whose $i,j$ entry is given by the following rule.  If two numbers lie in 
the same row of $\pi T_j$ and in the same column of $T_i$, then the $i,j$ entry in $A_\pi$ is zero.  Otherwise, 
the $i,j$ entry equals the sign of the column permutation for $T_i$ which takes the numbers of $T_i$ to 
the correct rows they occupy in $\pi T_j$.  The matrix representing $\pi$ in the natural representation is then 
$A_I^{-1} A_\pi$, where $I$ is the identity permutation of $S_n$.''

  \begin{figure}[h]
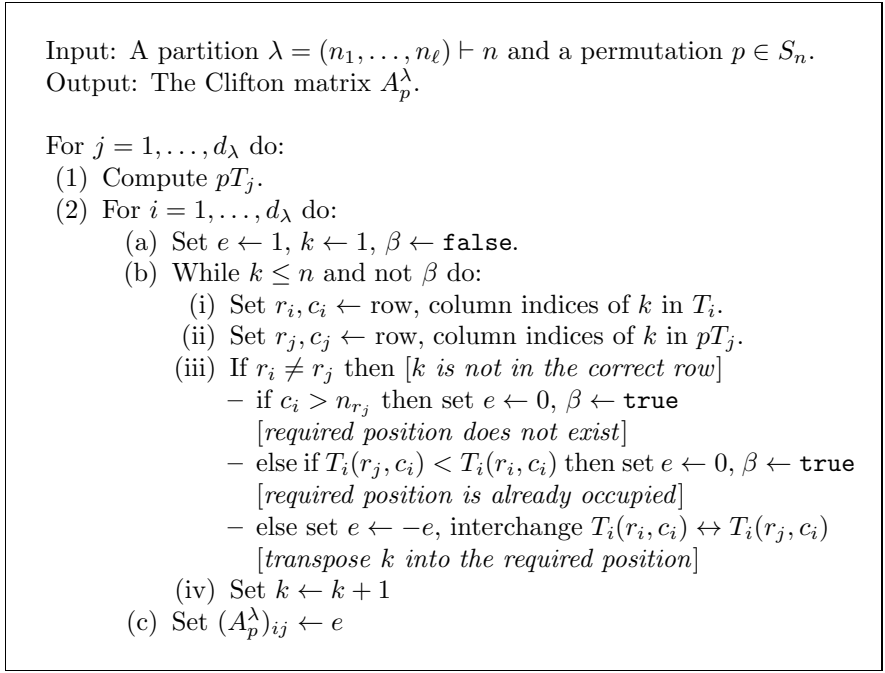

  \begin{mdframed}[style=mystyle]
  \smallskip
  \begin{itemize}
  \item[]
  Input: A partition $\lambda = (n_1,\dots,n_\ell) \vdash n$ and a permutation $p \in S_n$.
  \item[]
  Output: The Clifton matrix $A^\lambda_p$.
  \item[]
  \item[]
  For $j = 1, \dots, d_\lambda$ do:
  \begin{enumerate}
  \item
  Compute $p T_j$.
  \item
  For $i = 1, \dots, d_\lambda$ do:
    \begin{enumerate}
    \item
    Set
    $e \leftarrow 1$,
    $k \leftarrow 1$,
    $\beta \leftarrow \texttt{false}$.
    \item
    While $k \le n$ and not $\beta$ do:
      \begin{enumerate}
      \item
      Set
      $r_i, c_i \leftarrow$
      row, column indices of $k$ in $T_i$.
      \item
      Set
      $r_j, c_j \leftarrow$
      row, column indices of $k$ in $p T_j$.
      \item
      If $r_i \ne r_j$ then
        [$k$ \emph{is not in the correct row}]
        \begin{itemize}
        \item
        if $c_i > n_{r_j}$ then set $e \leftarrow 0$, $\beta \leftarrow \texttt{true}$
        \item[]
        [\emph{required position does not exist}]
        \item
        else if $T_i(r_j,c_i) < T_i(r_i,c_i)$ then set $e \leftarrow 0$, $\beta \leftarrow \texttt{true}$
        \item[]
        [\emph{required position is already occupied}]
        \item
        else set $e \leftarrow -e$, interchange $T_i(r_i,c_i) \leftrightarrow T_i(r_j,c_i)$
        \item[]
        [\emph{transpose} $k$ \emph{into the required position}]
        \end{itemize}
      \item
      Set $k \leftarrow k + 1$
      \end{enumerate}
      \item Set $(A^\lambda_p)_{ij} \leftarrow e$
    \end{enumerate}
  \end{enumerate}
  \end{itemize}
  \medskip
  \end{mdframed}
  \vspace{-3mm}
  \caption{Algorithm to compute the Clifton matrix $A^\lambda_p$}
  \label{cliftonmatrixalgorithm}
  \end{figure}

The Wedderburn decomposition of $\mathbb{F} S_n$ shows that every permutation $p \in S_n$ is
a sum of terms $p^\lambda \in \mathbb{F}S_n$ for $\lambda \vdash n$, and 
each $p^\lambda$ is a linear combination of the $U^\lambda_{ij}$:
  \[
  p
  =
  \sum_\lambda \left( \sum_{i=1}^{d_\lambda} \sum_{j=1}^{d_\lambda} r^\lambda_{ij}(p) U^\lambda_{ij} \right)
  \]

\begin{definition}
We define $R^\lambda(p)$ to be the $d_\lambda \times d_\lambda$ matrix with $(i,j)$ entry $r^\lambda_{ij}(p)$.
We call $R^\lambda(p)$ the \textbf{representation matrix} of $p \in S_n$ for $\lambda \vdash n$.
\end{definition}

\begin{lemma}
We have $U^\lambda_{ii} \, p \, U^\lambda_{jj} = r^\lambda_{ij}(p) U^\lambda_{ij}$.
\end{lemma}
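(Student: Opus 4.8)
The plan is to substitute the Wedderburn expansion of $p$ into the expression and then collapse everything using the matrix unit relations already established for the $U^\lambda_{ij}$.

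First I would recall that, by the decomposition displayed just before the statement, $p = \sum_\mu \sum_{k=1}^{d_\mu} \sum_{\ell=1}^{d_\mu} r^\mu_{k\ell}(p)\, U^\mu_{k\ell}$, where the outer sum is over $\mu \vdash n$. Multiplying on the left by $U^\lambda_{ii}$ and on the right by $U^\lambda_{jj}$ and using bilinearity of multiplication in $\mathbb{F}S_n$, I would obtain
\[
U^\lambda_{ii}\, p\, U^\lambda_{jj}
=
\sum_{\mu}\sum_{k,\ell} r^\mu_{k\ell}(p)\, U^\lambda_{ii}\, U^\mu_{k\ell}\, U^\lambda_{jj}.
\]
Then I would apply the matrix unit relations $U^\lambda_{ij} U^\mu_{k\ell} = \delta_{\lambda\mu}\delta_{jk} U^\lambda_{i\ell}$ (the proposition following Definition \ref{defU}) twice: first $U^\lambda_{ii} U^\mu_{k\ell} = \delta_{\lambda\mu}\delta_{ik} U^\lambda_{i\ell}$, and then $U^\lambda_{i\ell} U^\lambda_{jj} = \delta_{\ell j} U^\lambda_{ij}$. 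Combining these gives $U^\lambda_{ii} U^\mu_{k\ell} U^\lambda_{jj} = \delta_{\lambda\mu}\delta_{ik}\delta_{\ell j} U^\lambda_{ij}$.

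Substituting back, the triple sum collapses: the only surviving term is $\mu = \lambda$, $k = i$, $\ell = j$, leaving $U^\lambda_{ii}\, p\, U^\lambda_{jj} = r^\lambda_{ij}(p)\, U^\lambda_{ij}$, as claimed. Since every step is just an application of already-proved identities together with bilinearity, I do not anticipate any genuine obstacle; the only point requiring minor care is getting the indices in the matrix unit relation lined up correctly in each of the two applications (which slot plays the role of "$j$" and which of "$k$").
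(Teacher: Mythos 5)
Your proof is correct and is precisely the argument the paper leaves implicit: the lemma is stated without proof, immediately after the displayed expansion $p = \sum_\lambda \sum_{i,j} r^\lambda_{ij}(p) U^\lambda_{ij}$, and the collapse you carry out via the matrix unit relations $U^\lambda_{ij} U^\mu_{k\ell} = \delta_{\lambda\mu}\delta_{jk} U^\lambda_{i\ell}$ is exactly what makes it a one-liner. Your index bookkeeping is right: the first application gives $\delta_{\lambda\mu}\delta_{ik}U^\lambda_{i\ell}$, the second gives $\delta_{\ell j}U^\lambda_{ij}$, and the triple sum collapses to the single term $\mu=\lambda$, $k=i$, $\ell=j$.
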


\begin{proposition} \emph{\textbf{(Clifton's theorem)}}
For all $\lambda \vdash n$ and $p \in S_n$ we have 
  \[
  R^\lambda(p) = (A^\lambda_\iota)^{-1} A^\lambda_p.
  \]
\end{proposition}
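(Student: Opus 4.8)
The plan is to extract the scalars $r^\lambda_{ij}(p)$ by sandwiching $p$ between the diagonal matrix units $U^\lambda_{ii}$ and $U^\lambda_{jj}$, and then to compute the same sandwich directly in terms of the elements $E_i s_{ij}$ and the scalars $\esgn^p_{ij}$. First I would invoke the preceding unnumbered Lemma: since $\{U^\mu_{k\ell}\}$ is a basis of $\mathbb{F}S_n$ obeying the matrix unit relations, multiplying $p = \sum_{\mu,k,\ell} r^\mu_{k\ell}(p)\,U^\mu_{k\ell}$ on the left by $U^\lambda_{ii}$ and on the right by $U^\lambda_{jj}$ kills all terms except $\mu=\lambda$, $k=i$, $\ell=j$, giving $U^\lambda_{ii}\,p\,U^\lambda_{jj} = r^\lambda_{ij}(p)\,U^\lambda_{ij}$. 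So it suffices to evaluate the left-hand side another way and read off the coefficient of $U^\lambda_{ij}$.

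The second step is to translate $U^\lambda_{ii}\,p\,U^\lambda_{jj}$ through the definition $U^\lambda_{ab} = \alpha^\lambda(E_{ab}(\mathcal{E}^\lambda)^{-1})$ and the identity \eqref{eipej}, namely $E_a\,p\,E_b = \esgn^p_{ab}\,s_{ab}E_b$. I would first reduce $U^\lambda_{ii}$ and $U^\lambda_{jj}$ using Lemma \ref{lemmaE} and the fact that $\alpha^\lambda$ intertwines the twisted product on matrices with the product in $N^\lambda$: writing everything in terms of the $E_a s_{ab}$, one has $U^\lambda_{ii} = \sum_b ((\mathcal{E}^\lambda)^{-1})_{ib}\,E_i s_{ib}$ and similarly for $U^\lambda_{jj}$; but it is cleaner to use the algebra structure. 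Since $\psi$ is an algebra isomorphism, $U^\lambda_{ii}\,p\,U^\lambda_{jj}$ lies in $N^\lambda$, and expanding $p$ in the basis $\{E_a s_{ab}\}$ together with \eqref{eipej} shows that each $E_i p E_j$-type contribution produces a term $\esgn^p_{ij}$ up to the $(\mathcal{E}^\lambda)^{-1}$ corrections on both sides. Carrying the bookkeeping through, the net effect is that the matrix of coefficients one obtains, as $i,j$ range over $1,\dots,d_\lambda$, is $(\mathcal{E}^\lambda)^{-1} A^\lambda_p (\mathcal{E}^\lambda)^{-1}$ acting on the $E_a s_{ab}$ basis, but re-expressed in the $U^\lambda_{ab}$ basis one factor of $(\mathcal{E}^\lambda)^{-1}$ is absorbed, leaving $R^\lambda(p) = (\mathcal{E}^\lambda)^{-1} A^\lambda_p$. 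The identity $\mathcal{E}^\lambda = A^\lambda_\iota$ from the Clifton-matrix definition then finishes it.

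The main obstacle I anticipate is precisely the bookkeeping in that last step: keeping straight which occurrences of $(\mathcal{E}^\lambda)^{-1}$ are absorbed into the change of basis between $\{E_a s_{ab}\}$ and $\{U^\lambda_{ab}\}$ and which survive, and verifying that the indices $i,j$ in $\esgn^p_{ij}$ match up with the matrix product $(\mathcal{E}^\lambda)^{-1}A^\lambda_p$ rather than, say, $A^\lambda_p(\mathcal{E}^\lambda)^{-1}$ or a conjugate. A safe way to organize this is: (i) show $\alpha^\lambda(X)\,p\,\alpha^\lambda(Y) = \alpha^\lambda(X\,\mathcal{E}^\lambda\,A^\lambda_p\,\mathcal{E}^\lambda\,Y)$ for all matrices $X,Y$, by linearity reducing to $X=E_{ij}$, $Y=E_{k\ell}$ and using \eqref{eipej} to compute $E_i s_{ij}\,p\,E_k s_{k\ell}$; (ii) set $X = E_{ii}(\mathcal{E}^\lambda)^{-1}$ and $Y = (\mathcal{E}^\lambda)^{-1}E_{jj}$ so that $X\mathcal{E}^\lambda = E_{ii}$ and $\mathcal{E}^\lambda Y = E_{jj}$, collapsing the right side to $\alpha^\lambda(E_{ii}A^\lambda_p E_{jj}) = (A^\lambda_p)_{ij}\,\alpha^\lambda(E_{ij})$; (iii) compare with $U^\lambda_{ii}\,p\,U^\lambda_{jj} = r^\lambda_{ij}(p)\,U^\lambda_{ij} = r^\lambda_{ij}(p)\,\alpha^\lambda(E_{ij}(\mathcal{E}^\lambda)^{-1})$, and match coefficients of the linearly independent elements $\alpha^\lambda(E_{ij})$ versus $\alpha^\lambda(E_{ij}(\mathcal{E}^\lambda)^{-1})$ — here one more application of Lemma \ref{lemmaE} or of the invertibility of $\mathcal{E}^\lambda$ converts $(A^\lambda_p)_{ij}$ into the $(i,j)$ entry of $A^\lambda_p(\mathcal{E}^\lambda)^{-1}$... and tracing which side actually carries the inverse is exactly the delicate point. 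Done carefully, step (i) does the real work and the rest is substitution.
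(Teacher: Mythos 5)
Your overall strategy is the same as the paper's: sandwich $p$ between $U^\lambda_{ii}$ and $U^\lambda_{jj}$, use the unnumbered lemma to extract $r^\lambda_{ij}(p)$, and compute the sandwich via \eqref{eipej}. Packaging the computation as a matrix identity for $\alpha^\lambda$ is a good idea and would clean up the paper's bare-hands index manipulation. However, the identity you write in step~(i) is wrong, and the error propagates. The correct identity is
\[
\alpha^\lambda(X)\,p\,\alpha^\lambda(Y) \;=\; \alpha^\lambda\bigl(X\,A^\lambda_p\,Y\bigr),
\]
with \emph{no} extra factors of $\mathcal{E}^\lambda$: reducing by bilinearity to $X=E_{ab}$, $Y=E_{cd}$, one has
$E_a s_{ab}\,p\,E_c s_{cd} = s_{ab}E_b\,p\,E_c s_{cd} = \esgn^p_{bc}\,s_{ab}s_{bc}E_c s_{cd} = \esgn^p_{bc}\,E_a s_{ad} = \alpha^\lambda\bigl((A^\lambda_p)_{bc}E_{ad}\bigr) = \alpha^\lambda\bigl(E_{ab}A^\lambda_pE_{cd}\bigr)$.
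Note that Lemma~\ref{lemmaE} is exactly the $p=\iota$ case of this, since $\mathcal{E}^\lambda = A^\lambda_\iota$; the $\mathcal{E}^\lambda$ you see in that lemma \emph{is} the Clifton matrix of the identity, not an extra correction factor to be carried along when $p$ is general.

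The extra $\mathcal{E}^\lambda$'s you inserted are not harmless. In step~(ii) you also put the inverse on the wrong side, setting $Y = (\mathcal{E}^\lambda)^{-1}E_{jj}$, but $U^\lambda_{jj} = \alpha^\lambda\bigl(E_{jj}(\mathcal{E}^\lambda)^{-1}\bigr)$, so the $Y$ you need is $E_{jj}(\mathcal{E}^\lambda)^{-1}$. If you run your claimed identity from step~(i) with the \emph{correct} choice $X=E_{ii}(\mathcal{E}^\lambda)^{-1}$, $Y=E_{jj}(\mathcal{E}^\lambda)^{-1}$, you get $\alpha^\lambda(E_{ii}A^\lambda_p\mathcal{E}^\lambda E_{jj}(\mathcal{E}^\lambda)^{-1}) = (A^\lambda_p\mathcal{E}^\lambda)_{ij}\,U^\lambda_{ij}$, which would yield $R^\lambda(p) = A^\lambda_p A^\lambda_\iota$ — the wrong answer, and not even the transpose/reverse of the right one. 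This is precisely the ``delicate point'' you flagged in step~(iii) and did not resolve; it is not a bookkeeping nuisance but a sign that the input formula is false. With the corrected identity and the corrected $Y$, the argument does close in one line:
\[
U^\lambda_{ii}\,p\,U^\lambda_{jj}
= \alpha^\lambda\!\bigl(E_{ii}(\mathcal{E}^\lambda)^{-1}A^\lambda_pE_{jj}(\mathcal{E}^\lambda)^{-1}\bigr)
= \bigl((\mathcal{E}^\lambda)^{-1}A^\lambda_p\bigr)_{ij}\,\alpha^\lambda\!\bigl(E_{ij}(\mathcal{E}^\lambda)^{-1}\bigr)
= \bigl((A^\lambda_\iota)^{-1}A^\lambda_p\bigr)_{ij}\,U^\lambda_{ij},
\]
and matching against $r^\lambda_{ij}(p)U^\lambda_{ij}$ gives the theorem. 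This is exactly the paper's computation, rewritten as a single matrix identity.
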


\begin{proof}
We write $\Epsilon = A^\lambda_\iota$ and denote the entries of $\mathcal{E}^{-1}$ by $\eta_{ij}$.
We have
  \begin{align*}
  U^\lambda_{ii} \, p \, U^\lambda_{jj}
  &=
  \alpha( E_{ii} \mathcal{E}^{-1} ) \, p \, \alpha( E_{jj} \mathcal{E}^{-1} )
  =
  \left( \sum_{k=1}^{d_\lambda} \eta_{ik} E_i s_{ik} \right)
  p
  \left( \sum_{\ell=1}^{d_\lambda} \eta_{j\ell} E_j s_{j\ell} \right)
  \\
  &=
  \sum_{k=1}^{d_\lambda} \sum_{\ell=1}^{d_\lambda}
  \eta_{ik} \, \eta_{j\ell} \, E_i \, s_{ik} \, p \, E_j \, s_{j\ell}
  =
  \sum_{k=1}^{d_\lambda} \sum_{\ell=1}^{d_\lambda}
  \eta_{ik} \, \eta_{j\ell} \, s_{ik} \, E_k \, p \, E_j \, s_{j\ell}
  \\
  &\stackrel{\eqref{eipej}}{=}
  \sum_{k=1}^{d_\lambda} \sum_{\ell=1}^{d_\lambda}
  \eta_{ik} \, \eta_{j\ell} \, s_{ik} \, \esgn^p_{kj} \, s_{kj} \, E_j \, s_{j\ell}
  =
  \sum_{k=1}^{d_\lambda} \sum_{\ell=1}^{d_\lambda}
  \eta_{ik} \, \eta_{j\ell} \, \esgn^p_{kj} \, s_{ik} \, s_{kj} \, E_j \, s_{j\ell}
  \\
  &=
  \sum_{k=1}^{d_\lambda} \sum_{\ell=1}^{d_\lambda}
  \eta_{ik} \, \eta_{j\ell} \, \esgn^p_{kj} \, s_{ik} \, E_k \, s_{kj} \, s_{j\ell}
  =
  \sum_{k=1}^{d_\lambda} \sum_{\ell=1}^{d_\lambda}
  \eta_{ik} \, \eta_{j\ell} \, \esgn^p_{kj} \, E_i \, s_{ik} \, s_{kj} \, s_{j\ell}
  \\
  &=
  \sum_{k=1}^{d_\lambda} \sum_{\ell=1}^{d_\lambda}
  \eta_{ik} \, \eta_{j\ell} \, \esgn^p_{kj} \, E_i \, s_{i\ell}
  =
  \left(
  \sum_{k=1}^{d_\lambda}
  \eta_{ik} \, \esgn^p_{kj}
  \right)
  \left(
  \sum_{\ell=1}^{d_\lambda}
  \eta_{j\ell} \, E_i \, s_{i\ell}
  \right)
  \\
  &=
  \left(
  \sum_{k=1}^{d_\lambda}
  \eta_{ik} \, \esgn^p_{kj}
  \right)
  U_{ij}
  =
  ( A_\iota^{-1} A_p  )_{ij} \, U_{ij}.
  \end{align*}
Therefore $r^\lambda_{ij}(p) = ( A_\iota^{-1} A_p )_{ij}$ for all $i, j$ and so $R^\lambda(p) = A_\iota^{-1} A_p$
as required.
\end{proof}

\begin{example}
For $n = 3$ we have $A^\lambda_\iota=I_{d_\lambda}$ for all $\lambda \vdash 3$, so $R^\lambda_p = A^\lambda_p$ 
for all $p \in S_3$.
Consider $\lambda = 21$ with $d_\lambda = 2$, and $p = 213$.
For $i, j = 1, 2$ we write the tableaux $T_i$ and $p T_j$, and the vertical permutation $q$ (when it exists):
  \[
  \begin{array}{lllll}
  (i,j) = (1,1)
  &\quad
  T_1 =\!\! \begin{array}{c} \young(12,3) \end{array}
  &\quad
  p T_1 =\!\! \begin{array}{c} \young(21,3) \end{array}
  &\quad
  q = \iota
  &\quad
  \epsilon(q) = 1
  \\[9pt]
  (i,j) = (1,2)
  &\quad
  T_1 =\!\! \begin{array}{c} \young(12,3) \end{array}
  &\quad
  p T_2 =\!\! \begin{array}{c} \young(23,1) \end{array}
  &\quad
  q = 321
  &\quad
  \epsilon(q) = -1
  \\[9pt]
  (i,j) = (2,1)
  &\quad
  T_2 =\!\! \begin{array}{c} \young(13,2) \end{array}
  &\quad
  p T_1 =\!\! \begin{array}{c} \young(21,3) \end{array}
  &
  \multicolumn{2}{l}{\quad \text{$q$ does not exist}}
  \\[9pt]
  (i,j) = (2,2)
  &\quad
  T_2 =\!\! \begin{array}{c} \young(13,2) \end{array}
  &\quad
  p T_2 =\!\! \begin{array}{c} \young(23,1) \end{array}
  &\quad
  q = 213
  &\quad
  \epsilon(q) = -1
  \end{array}
  \]
We obtain the following result which agrees with $(\psi^\lambda)^{-1}(213)$ from Example \ref{example.n=3}:
  \[
  A^\lambda_p = \begin{bmatrix}  1 & -1 \\  0 & -1 \end{bmatrix}
  \]
\end{example}

\begin{example}
Consider $n = 5$, the smallest $n$ for which there exists $\lambda \vdash n$ such that
$A^\lambda_\iota \ne I_{d_\lambda}$.
We list the standard tableaux for $\lambda = 32$ in lex order:
  \[
  T_1, \dots, T_5
  =\!\! 
  \begin{array}{c} 
  \young(123,45)
  \quad
  \young(124,35)
  \quad
  \young(125,34)
  \quad
  \young(134,25)
  \quad
  \young(135,24) 
  \end{array}
  \]
Let $p = \iota$ and consider the $(i,j) = (1,5)$ entry of $\Epsilon = A^\lambda_\iota$;
we have $T_i = T_1$ and $p T_j = T_5$.
The required vertical permutation is the transposition $q = 15342$ interchanging 2 and 5,
so $( A^\lambda_\iota )_{15} = -1$.
Similar calculations show that
  \[
  A^\lambda_\iota = I_5 - E_{15},
  \qquad
  ( A^\lambda_\iota )^{-1} = I_5 + E_{15}.
  \]
To illustrate the difference between the Clifton matrix $A^\lambda_p$
and the representation matrix $R^\lambda_p = ( A^\lambda_\iota )^{-1} A^\lambda_p$, consider the 5-cycle
$p = 23451$; in this case we obtain
  \[
  A^\lambda_p
  =
  \left[
  \begin{array}{rrrrr}
  -1 &  0 & 1 & 0 & 0 \\
  -1 &  0 & 0 & 0 & 1 \\
   0 & -1 & 0 & 0 & 0 \\
  -1 &  0 & 0 & 1 & 0 \\
   0 & -1 & 0 & 1 & 0
  \end{array}
  \right]
  \qquad\qquad
  R^\lambda_p
  =
  \left[
  \begin{array}{rrrrr}
  -1 & -1 & 1 & 1 & 0 \\
  -1 &  0 & 0 & 0 & 1 \\
   0 & -1 & 0 & 0 & 0 \\
  -1 &  0 & 0 & 1 & 0 \\
   0 & -1 & 0 & 1 & 0
  \end{array}
  \right].
  \]
\end{example}


\section{Computational methods for studying polynomial identities} \label{allidentities}

Let $A$ be an algebra, not necessarily associative, which is finite dimensional over a field $\mathbb{F}$.
The multiplication in $A$ is a bilinear map $m\colon A \times A \to A$ denoted by $(x,y) \mapsto xy$.
We write $d$ for the dimension of $A$ over $\mathbb{F}$.
If we choose an ordered basis $v_1, \dots, v_d$ of the vector space $A$, then the multiplication in $A$ can
be expressed in terms of the structure constants $c_{ij}^k$ with respect to this basis:
  \[
  v_i v_j = \sum_{k=1}^d c_{ij}^k v_k.
  \]
A polynomial identity satisfied by $A$ is an equation of the form $I \equiv 0$ where $I$ is a nonassociative
noncommutative polynomial (not necessarily multilinear or even homogeneous) which vanishes when arbitrary
elements of $A$ are substituted for the variables in $I$.
We use the symbol $\equiv$ to indicate that the equation holds for all values of the variables.
The polynomial identities satisfied by the algebra $A$ do not depend on the choice of basis.


\subsection{Historical background}

We denote by $T_X(A)$ the set of polynomial identities in the set of variables $X$ satisfied by the algebra $A$.
The set $T_X(A)$ is an ideal of the free nonassociative algebra $\mathbb{F}\{X\}$ generated by $X$.
Moreover, $T_X(A)$ is a  $T$-ideal: $f(T_X(A)) \subseteq T_X(A)$ for any endomorphism 
$f\colon \mathbb{F}\{X\} \to \mathbb{F}\{X\}$.

\begin{problem}
Specht, 1950 \cite{Specht1950}.
Given a class of algebras, determine whether every algebra in this class has a finite basis,
in the sense that its  $T$-ideal is generated by a finite number of identities.
\end{problem}

Specht originally posed this problem for associative algebras over fields of characteristic zero.
The complete solution was given by Kemer.

\begin{theorem}
\emph{Kemer, 1987 \cite{Kemer1987}}.
Every associative algebra over a field of characteristic zero has a finite basis of identities.
\end{theorem}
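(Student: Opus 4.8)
The plan is to follow Kemer's strategy, which converts the statement into a structure theorem for $T$-ideals expressed in terms of finite-dimensional $\mathbb{Z}_2$-graded algebras. First I would reformulate: the assertion is equivalent to the ascending chain condition on the lattice of $T$-ideals of the free associative algebra $\mathbb{F}\{X\}$ on a countable set $X$, i.e.\ there is no strictly increasing chain $\Gamma_1 \subsetneq \Gamma_2 \subsetneq \cdots$ of $T$-ideals. Rather than invoking Zorn directly (the union of a chain of non-finitely-based $T$-ideals need not be non-finitely-based), the argument will be organized as a Noetherian induction on a numerical invariant of $T$-ideals — the \emph{Kemer index} — introduced below; the goal is to show that a minimal counterexample, if it existed, would in fact admit a finite generating set.

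Second, I would bring in the representation theory of $S_n$ that occupies the first part of this paper. For a PI-algebra $A$ the multilinear identities of degree $n$ form an $S_n$-submodule of the multilinear component $P_n\subseteq\mathbb{F}\{X\}$, and by the Wedderburn decomposition of $\mathbb{F} S_n$ the resulting cocharacter $\chi_n(A)=\sum_{\lambda\vdash n}m_\lambda\chi_\lambda$ is governed by which partitions $\lambda$ occur and with what multiplicity. The Amitsur--Regev hook theorem guarantees that for a PI-algebra only partitions lying inside an infinite hook occur and that the $m_\lambda$ grow polynomially, which yields the codimension and cocharacter growth bounds that make the induction finite. To each $T$-ideal I would attach its Kemer index $(t,s)$ — integers recording the widest and tallest hook partitions that survive in the cocharacters of arbitrarily high degree — and set up the induction on this index.

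Third, and this is the technical heart, I would use the reduction to finite-dimensional superalgebras via the \emph{Grassmann envelope}: for a $\mathbb{Z}_2$-graded algebra $B=B_0\oplus B_1$ and the infinite-dimensional Grassmann (exterior) algebra $E=E_0\oplus E_1$ with its natural grading, set $E(B)=(B_0\otimes E_0)\oplus(B_1\otimes E_1)\subseteq B\otimes E$. The crucial structural result to establish — using representability of finitely generated PI-algebras in characteristic $0$, the superalgebra analogue of the Wedderburn--Malcev theorem, and the $S_n$-growth estimates of the previous step to control dimensions — is that \emph{every} $T$-ideal $\Gamma$ equals $T(E(B))$ for some finite-dimensional superalgebra $B$ over an extension field, and that $B$ can be taken to be a ``fundamental'' algebra pinned down by the Kemer index of $\Gamma$. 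Along the way one classifies the verbally prime $T$-ideals as precisely those of $M_n(\mathbb{F})$, $M_n(E)$, and the checkerboard algebras $M_{a,b}(E)$, which are each finitely based (e.g.\ via the Razmyslov--Procesi description of the identities of matrices); this handles the base of the induction.

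Finally, I would run the Noetherian induction. Realize the minimal counterexample $\Gamma$ as $T(E(B))$ with $B$ finite-dimensional of minimal Kemer index; every $T$-ideal strictly containing $\Gamma$ then corresponds to a superalgebra of strictly smaller index, hence is finitely based by the inductive hypothesis, provided the relevant class of superalgebras is closed under the operations used (the ``Phoenix property''). A linearization and pigeonhole argument then shows that $\Gamma$ is generated over one such larger $T$-ideal by finitely many identities, all of bounded degree determined by $\dim B$, contradicting the assumption that $\Gamma$ is not finitely based. The main obstacle, where essentially all the difficulty resides, is the third step — proving that every $T$-ideal is the $T$-ideal of the Grassmann envelope of a finite-dimensional superalgebra. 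It is precisely this step that fuses characteristic $0$ in an essential way (through representability and the $S_n$-cocharacter machinery) and that makes the theorem hard; the rest is bookkeeping around the induction.
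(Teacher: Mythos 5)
The paper does not prove this theorem; it is stated purely as a cited external result, \cite{Kemer1987}, used as historical context for the Specht problem. There is therefore no ``paper's own proof'' to compare against, and your proposal should be judged against Kemer's actual argument and the modern expositions of it.

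On that measure, your sketch is a faithful high-level outline of the real strategy: the equivalence of the Specht property with the ascending chain condition on $T$-ideals (since, in characteristic $0$, a strictly ascending chain yields a non-finitely-based union, and conversely), the use of $S_n$-cocharacters and the Amitsur--Regev hook-shape bounds to control multiplicities, the introduction of the Kemer index $(t,s)$ as the induction parameter, the representability theorem reducing an arbitrary $T$-ideal to $T(E(B))$ for a finite-dimensional superalgebra $B$ via the Grassmann envelope, the classification of verbally prime $T$-ideals as $T(M_n(\mathbb{F}))$, $T(M_n(E))$, $T(M_{a,b}(E))$ for the base of the induction, and the ``Phoenix property'' closure argument that lets the induction terminate. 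One caveat worth flagging: your phrasing that \emph{every} $T$-ideal strictly containing the minimal counterexample ``corresponds to a superalgebra of strictly smaller index'' compresses the actual mechanism. The Kemer index is not monotone along arbitrary inclusions of $T$-ideals; the induction works through the more delicate machinery of Kemer polynomials (multilinear polynomials alternating in many disjoint sets of ``large'' and ``small'' cardinality determined by the index), which certify the index and allow a pigeonhole argument to show that adjoining finitely many identities forces the index to drop. The inductive step is a double induction on $(t,s)$ interleaved with a reduction to ``fundamental'' superalgebras, not a single step on one parameter. That said, for a blind sketch of a theorem whose full proof fills Kemer's monograph and the Aljadeff--Kanel-Belov--Karasik exposition, your outline identifies every load-bearing ingredient and correctly locates the real difficulty in the representability theorem.
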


Similar results were obtained by Vais and Zelmanov {\cite{VS1989} for 
finitely generated Jordan algebras (1989), and by Iltyakov \cite{Iltyakov1991,Iltyakov1992} for 
finitely generated alternative algebras (1991) and Lie algebras (1992).

If we consider the usual grading of $\mathbb{F}\{X\}$ by total degree, then we can study $T_X(A)_n$ 
for each $n \in \mathbb{N}$,
the homogeneous component of degree $n$ of the $T$-ideal. 
The nonzero elements of $T_X(A)_n$ are the polynomial identities of degree $n$ for $A$.
An important problem is to find the smallest $n$ for which $T(A)_n \ne 0$;
in this case, the nonzero elements of $T_X(A)_n$ are called minimal identities for $A$.
For the simple matrix algebras $M_{n} ({\mathbb F})$, the minimal identities were found
by Amitsur and Levitzki.

\begin{theorem} \label{ALtheorem}
\emph{Amitsur and Levitzki, 1950 \cite{AL1950}}.
The minimal degree of a polynomial identity of $M_{n}({\mathbb F})$ is $2n$.
Every multilinear polynomial identity of degree $2n$ for $M_{n}({\mathbb F})$ is a scalar multiple of
the standard polynomial:
  \[
  s_{2n} (x_1, \dots , x_{2n}) 
  = 
  \sum_{\sigma \in S_{2n}} \epsilon(\sigma) x_{\sigma(1)} \cdots x_{\sigma(2n)}.
  \]
\end{theorem}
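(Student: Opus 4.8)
The plan is to split the statement into its two assertions: (A) $M_n(\mathbb{F})$ satisfies no nonzero identity of degree $<2n$, and (B) every multilinear identity of degree $2n$ is a scalar multiple of $s_{2n}$ --- which of course subsumes the claim that $s_{2n}$ is itself an identity. For (A) I would first reduce to multilinear identities: over a field of characteristic $0$ the minimal degree of a nonzero identity equals that of a nonzero multilinear identity. Given a nonzero multilinear $f=\sum_{\sigma\in S_m}a_\sigma x_{\sigma(1)}\cdots x_{\sigma(m)}$ with $m\le 2n-1$, I would replace $f$ by $f(x_{\tau(1)},\dots,x_{\tau(m)})$ for a suitable $\tau$ so that $a_\iota\neq 0$, and then substitute for $x_1,\dots,x_m$ the first $m$ entries of the ``staircase'' list of matrix units $e_{11},e_{12},e_{22},e_{23},e_{33},\dots,e_{nn}$, which has length $2n-1$. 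Since $e_{ij}e_{k\ell}=\delta_{jk}e_{i\ell}$, the only $\sigma$ for which the product $e_{\cdots}\cdots e_{\cdots}$ is nonzero is $\sigma=\iota$ (the ``compatibility digraph'' on the positions forces the path $1\to 2\to\cdots\to m$; the few extra arcs never help), so $f$ evaluates to $a_\iota$ times a nonzero matrix unit, and hence is not an identity.

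For the substantive half of (B) --- that $s_{2n}$ is an identity --- the plan is to run Rosset's argument. Let $G$ be the Grassmann algebra over $\mathbb{F}$ on generators $\theta_1,\dots,\theta_{2n}$, with even subalgebra $G_0$, a commutative $\mathbb{F}$-algebra containing $\mathbb{Q}$. For $A_1,\dots,A_{2n}\in M_n(\mathbb{F})$ set $B=\sum_{i=1}^{2n}\theta_i\otimes A_i\in G\otimes M_n(\mathbb{F})=M_n(G)$. Multiplying out and using $\theta_i\theta_j=-\theta_j\theta_i$, $\theta_i^2=0$, one gets $B^{2n}=\theta_1\cdots\theta_{2n}\otimes s_{2n}(A_1,\dots,A_{2n})$, so it suffices to show $B^{2n}=0$. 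Now $B^2=\sum_{i<j}\theta_i\theta_j\otimes(A_iA_j-A_jA_i)$ has entries in $G_0$, so $B^2$ is a matrix over the commutative ring $G_0$ and satisfies its characteristic polynomial by Cayley--Hamilton. A one-line cyclic-shift computation shows $\mathrm{tr}(B^{2k})=0$ for all $k\ge 1$: reindexing the sum defining $\mathrm{tr}(B^{2k})$ by a cyclic rotation leaves the trace of the matrix part unchanged but multiplies the Grassmann monomial by $(-1)^{2k-1}=-1$. Hence $\mathrm{tr}((B^2)^k)=0$ for all $k\ge1$, and by Newton's identities (valid since $\mathbb{Q}\subseteq G_0$) all the coefficients of the characteristic polynomial of $B^2$ vanish; that polynomial is therefore $t^n$, and Cayley--Hamilton gives $(B^2)^n=B^{2n}=0$, so $s_{2n}(A_1,\dots,A_{2n})=0$.

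For the uniqueness half of (B), identify a multilinear polynomial of degree $2n$ with an element of $\mathbb{F}S_{2n}$, and let $W\subseteq\mathbb{F}S_{2n}$ be the subspace of identities of $M_n(\mathbb{F})$. Substituting permuted variables shows $W$ is an $S_{2n}$-submodule of the regular representation, i.e.\ a left ideal, so by semisimplicity $W=\bigoplus_\lambda m_\lambda\,(\text{copy of the irreducible for }\lambda)$ with $0\le m_\lambda\le d_\lambda$. By the previous paragraph $W$ contains $s_{2n}$, which spans the one-dimensional irreducible for $\lambda=(1^{2n})$, so $m_{(1^{2n})}=1$, and it remains to prove $m_\lambda=0$ for $\lambda\neq(1^{2n})$, equivalently for $\lambda_1\ge 2$. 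Suppose not; then $W$ contains a copy of the irreducible for some such $\lambda$, and since the Young symmetrizer shows that the transposition $(12)$ does not act as $-\mathrm{Id}$ on that irreducible, $W$ contains a nonzero element $g$ with $(12)\cdot g=g$, i.e.\ a nonzero identity symmetric in $x_1$ and $x_2$. Setting $x_2$ equal to the identity matrix $I$ of $M_n(\mathbb{F})$ turns $g$ into a multilinear polynomial of degree $2n-1$ that is again an identity; if this specialization is not the zero polynomial it contradicts part (A).

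The hard part will be exactly this last step: showing that some such $g$ --- for instance the symmetrizer $E^\lambda_1$ of \S\ref{matrixunits}, whose first two entries $1,2$ lie in the first row of $T_1$ so that $(12)E^\lambda_1=E^\lambda_1$ --- does not vanish when $x_2\mapsto I$, equivalently that the ``delete $x_2$'' map $\mathbb{F}S_{2n}\to\mathbb{F}S_{2n-1}$ does not annihilate the copy of the irreducible for $\lambda$ generated by $E^\lambda_1$. I would attack this by a branching-rule count for $S_{2n}\!\downarrow\! S_{2n-1}$ combined with surjectivity of the delete map, checking that no constituent is killed; a careful but essentially routine examination of the coefficient of a single well-chosen monomial in $E^\lambda_1$ should also do it. Alternatively one can bypass the representation theory entirely and obtain uniqueness by Amitsur and Levitzki's original combinatorial argument, bounding the coefficients $a_\sigma$ directly through a family of matrix-unit substitutions. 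Either way, combining (A) with the half of (B) established by Rosset's method already yields that the minimal degree is exactly $2n$.
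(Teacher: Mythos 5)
The paper does not prove this theorem; it is cited without proof from Amitsur and Levitzki \cite{AL1950}, so there is no internal argument to compare against, and I evaluate your proposal on its own.

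Your treatment of (A) (the lower bound $2n$, via the staircase $e_{11},e_{12},e_{22},\dots$ and uniqueness of the Eulerian chain) and of the existence half of (B) (Rosset's Grassmann-algebra computation, with $\operatorname{tr}(B^{2k})=0$, Newton's identities, and Cayley--Hamilton) are both correct, standard, and essentially complete.

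The uniqueness half, however, does not merely have a ``hard last step'' --- the step you flag cannot succeed, and the approach is circular. You want to exhibit a $(12)$-symmetric $g\in V_\lambda\subseteq W$ whose specialization $g|_{x_2=1}$ is a \emph{nonzero} polynomial, so that part (A) yields a contradiction. But for \emph{every} $g\in W$ and every $i$, the specialization $g|_{x_i=1}$ is again a multilinear identity of $M_n(\mathbb{F})$, now of degree $2n-1$, and hence is the zero polynomial precisely \emph{by part (A)}. Thus $W\subseteq\bigcap_i\ker(x_i\mapsto 1)$, and the quantity you need to be nonzero is automatically zero for every element of $W$; the same fact you invoke to conclude makes the thing you hope to prove false. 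The suggested repairs do not help: showing that the particular Young-symmetrizer element $E^\lambda_1$ has nonzero specialization would only address the single left ideal $\mathbb{F}S_{2n}E^\lambda_1$, whereas the hypothetical $V_\lambda\subseteq W$ is an arbitrary (in general different) copy of $[\lambda]$ in the regular representation; and a branching-rule count cannot rule out $V_\lambda\subseteq\ker(x_2\mapsto 1)$, because $\bigcap_i\ker(x_i\mapsto 1)$ is exactly the space of \emph{proper} (commutator) multilinear polynomials of degree $2n$, an $S_{2n}$-submodule of dimension equal to the derangement number $D_{2n}$, which for $2n\ge 3$ is much larger than $1$ and contains copies of $[\lambda]$ for several $\lambda\ne(1^{2n})$. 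In other words, the module structure of $\mathbb{F}S_{2n}$ together with part (A) only force $W$ into the proper polynomials, and that is nowhere near $\dim W=1$. To finish the uniqueness you need an argument that uses $M_n(\mathbb{F})$ beyond its minimal identity degree --- e.g.\ Amitsur and Levitzki's original matrix-unit/Eulerian-path combinatorics, or Swan's graph-theoretic refinement of it; the brief mention of this fallback in your last sentence is the route that actually works, but it is not developed here.
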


Leron \cite{Leron1973} proved (1973) that if char(${\mathbb F}) = 0$ and $n > 2$ then every polynomial identity 
of degree $2n + 1$ for $M_n({\mathbb F})$ is a consequence of $s_{2n}$.
In particular, the identities of degree 7 for $M_3({\mathbb F})$ are consequences of $s_6$.
Drensky and Kasparian \cite{DK1983} found (1983) all identities of degree 8 for $M_3({\mathbb F})$ 
when char(${\mathbb F}) = 0$, and showed that they are consequences of $s_6$; 
see also Bondari \cite{Bondari1993,Bondari1997}.
The  $T$-ideal of identities $T(M_2({\mathbb F}))$ has been studied by many authors;
see Razmyslov \cite{Razmyslov1994} for a survey.
The computational methods used to study polynomial identities of matrices have been described by Benanti and
co-authors \cite{BDDK2003}.

\begin{problem} \label{problem}
Given an ordered basis and structure constants $c_{ij}^k$ for a finite-dimensional algebra $A$ over $\mathbb{F}$,
determine the polynomial identities of degree $\le n$ satisfied by $A$.
In particular, find the minimal identities satisfied by $A$.
\end{problem}

Over a field of characteristic 0, every polynomial identity is equivalent to a set of multilinear identities;
see Zhevlakov and co-authors \cite{ZSSS1982}.
Hence in characteristic 0, we may restrict our study to multilinear identities:
equations of the form $I( x_1, \dots, x_n ) \equiv 0$ where $I( x_1, \dots, x_n )$ is a linear combination of monomials
in which each of the $n$ variables $x_1, \dots, x_n$ occurs exactly once.
So each term of $I( x_1, \dots, x_n )$ consists of a coefficient from $\mathbb{F}$ and a nonassociative monomial,
which is a permutation of the variables $x_1, x_2, \dots, x_n$ together with an association type
(placement of parentheses) which indicates the order in which the multiplications are performed.
If there are $t=t(n)$ association types in degree $n$, then $I( x_1, \dots, x_n )$ can be written as a sum of $t$ summands
$I_1 + I_2 + \cdots + I_t$, where the elements in each summand have the same association type.
Within each summand, the monomials differ only in the permutation of the variables,
and so each summand can be regarded as an element of the group algebra $\mathbb{F} S_n$.
We can therefore regard $I( x_1, \dots, x_n )$ as an element of $\mathbb{F} S_n \oplus \cdots \oplus \mathbb{F} S_n$,
the direct sum of $t$ copies of $\mathbb{F} S_n$.

This approach to polynomial identities was introduced independently in 1950 
by Malcev \cite{Malcev1950} and Specht \cite{Specht1950}.
In the 1970's, Regev developed this theory further, with particular application to associative PI algebras; 
see for instance \cite{Regev1978,Regev1988}.
Around the same time, the computational implementation of this theory was initiated by Hentzel \cite{Hentzel1977a,Hentzel1977b}.
(Two of the present authors learned about the application of this theory to polynomial identities through working with Hentzel.)


\subsection{Multilinear polynomial identities satisfied by an algebra}

Consider a countable set $X =\{x_1, x_2, \dots \}$ of variables.

\begin{definition}
We construct the set $M(X)$ of \textbf{nonassociative monomials} in the variables $X$ inductively as follows
\cite{ZSSS1982}:
\begin{itemize}
\item
$X \subset M(X)$;
\item
if $x, y \in X$ and $v, w \in M(X) \setminus X$ then $xy, \, x(v), \, (v)x, \, (v)(w) \in M(X)$.
\end{itemize}
An \textbf{association type} is a placement of parentheses on a monomial of $M(X)$.
\end{definition}

There is a natural total order on association types of degree $n$ defined inductively, using 
the fact that every nonassociative monomial has a unique factorization $x = yz$.
We fix a symbol $\ast$ and think of an association type as a monomial in which every variable equals $\ast$.
If $x = yz$ and $x' = y'z'$ are monomials of degree $n$, then $x \prec x'$ if and only if 
either $y \prec y'$ or $y = y'$ and $z \prec z'$.

\begin{example}
For $n = 3$ we have two types: $(\ast\ast)\ast$ and $\ast(\ast\ast)$.
For $n = 4$ we have five types:
$((\ast\ast)\ast)\ast$, $(\ast(\ast\ast))\ast$, $(\ast\ast)(\ast\ast)$, $\ast((\ast\ast)\ast)$, $\ast(\ast(\ast\ast))$.
\end{example}

\begin{lemma}
The number of distinct association types of degree $n$ in $\mathbb{F}\{X\}$
is equal to the Catalan number (with the index shifted by 1):
  \[
  C_{n-1} = \frac{1}{n} \binom{2n{-}2}{n{-}1}.
  \]
\end{lemma}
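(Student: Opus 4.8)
\emph{Plan.} Let $a_n$ denote the number of distinct association types of degree $n$; we must show $a_n = C_{n-1}$. The key structural fact, already used to define the total order on association types, is that every nonassociative monomial of degree $n \ge 2$ has a \emph{unique} factorization $x = yz$ with $y$ of degree $k$ and $z$ of degree $n-k$ for some $1 \le k \le n-1$. Reading this for association types (variables replaced by $\ast$), I get $a_1 = 1$ and the convolution recurrence
\[
a_n = \sum_{k=1}^{n-1} a_k \, a_{n-k} \qquad (n \ge 2).
\]

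\emph{Generating function.} First I would introduce $f(t) = \sum_{n \ge 1} a_n t^n \in \mathbb{F}[[t]]$. The recurrence together with $a_1 = 1$ translates into the functional equation $f(t) = t + f(t)^2$, since the coefficient of $t^n$ in $f(t)^2$ is exactly $\sum_{k=1}^{n-1} a_k a_{n-k}$ for $n \ge 2$, while the $t^1$ term is supplied by $t$ and $f(t)^2$ has no linear term. Solving the quadratic $f^2 - f + t = 0$ and choosing the root with $f(0) = 0$ gives
\[
f(t) = \frac{1 - \sqrt{1 - 4t}}{2}.
\]

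\emph{Coefficient extraction.} To finish, I would expand $\sqrt{1-4t}$ by the generalized binomial theorem and simplify $\binom{1/2}{n}(-4)^n$ to the standard closed form, obtaining
\[
f(t) = \sum_{n \ge 1} \frac{1}{n}\binom{2n-2}{n-1} t^n,
\]
so that $a_n = \frac{1}{n}\binom{2n-2}{n-1} = C_{n-1}$ as claimed; equivalently, one recognizes $f(t) = t\,C(t)$ where $C(t) = \sum_{n\ge 0} C_n t^n = \frac{1-\sqrt{1-4t}}{2t}$ is the Catalan generating function, which shifts the index by one. I expect the only mildly delicate point to be this coefficient computation, i.e.\ checking the identity $\binom{1/2}{n}(-4)^n = -\frac{2}{n}\binom{2n-2}{n-1}$; it is a routine manipulation of factorials and could alternatively be bypassed entirely by a bijection between association types of degree $n$ and rooted binary trees with $n$ leaves (or Dyck paths of length $2n-2$), which are classically enumerated by $C_{n-1}$.
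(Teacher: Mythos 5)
The paper does not prove this lemma at all: it is stated as a known fact and immediately followed by a table of Catalan numbers. Your proof is correct and fills that gap. The recurrence $a_n = \sum_{k=1}^{n-1} a_k a_{n-k}$ with $a_1 = 1$ follows from the unique factorization $x = yz$ of nonassociative monomials, which the paper does explicitly invoke when defining the total order on association types; the translation to $f(t) = t + f(t)^2$, the choice of root with $f(0)=0$, and the coefficient extraction $-\tfrac12\binom{1/2}{n}(-4)^n = \tfrac1n\binom{2n-2}{n-1}$ are all standard and check out. Since the paper offers no argument, there is nothing to compare approaches against; your generating-function proof is one of the two canonical routes, and the bijection you mention in passing (association types of degree $n$ correspond to rooted binary trees with $n$ leaves, equivalently full binary trees with $n-1$ internal nodes, which are counted by $C_{n-1}$) is the other, and is arguably the more transparent of the two since it avoids the binomial-series manipulation entirely.
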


The numbers $C_n$ grow very rapidly; here we present the first 12:
  \[
  \begin{array}{l|rrrrrrrrrrrr}
  n & 1 & 2 & 3 & 4 & 5 & 6 & 7 & 8 & 9 & 10 & 11 & 12
  \\
  C_n & 1 & 1 & 2 & 5 & 14 & 42 & 132 & 429 & 1430 & 4862 & 16796 & 58786
  \end{array}
  \]

\begin{example}
If $A$ is an associative algebra, then the placement of parentheses does not affect the product,
and so we only need to choose one association type in each degree as the normal form.
If necessary, we choose the right-normed product $x_1 ( x_2 ( \cdots ( x_{n-1} x_n ) \cdots ) )$,
using the identity permutation of the variables.
But usually we can omit the parentheses and write simply $x_1 x_2 \cdots x_{n-1} x_n$.
In an associative algebra, any two multilinear monomials of degree $n$ in $n$ variables differ
only by the permutation of the variables, and so a multilinear polynomial identity in degree $n$ 
can be regarded as an element of the group algebra $\mathbb{F} S_n$.
\end{example}

\begin{example}
If $A$ is commutative (such as a Jordan algebra) or anticommutative (such a Lie algebra), then 
the association types are not independent.
For example, we have $(ab)c = \pm c(ab)$.
In these cases, the Wedderburn-Etherington numbers (oeis.org/A001190) enumerate the association types:
1, 1, 1, 2, 3, 6, 11, 23, 46, \dots
\end{example}

\begin{definition}
If $\mathbb{F}$ is a field, then we write $\mathbb{F}\{X\}$ for the vector space  over $\mathbb{F}$ with basis $M(X)$.
We define a multiplication on $\mathbb{F}\{X\}$ by extending bilinearly the product in $M(X)$, and call this the 
\textbf{free nonassociative algebra} generated by $X$ over $\mathbb{F}$.
Its elements are called \textbf{nonassociative polynomials} in the variables $X$.
\end{definition}

\begin{definition}
Let $A$ be an algebra over $\mathbb{F}$ (not necessarily associative). 
A nonassociative polynomial $f = f(x_1, \dots, x_n) \in \mathbb{F}\{X\}$ is
called a \textbf{polynomial identity} of $A$ if $f(a_1, \dots, a_n) = 0$ for all $a_1, \dots, a_n \in A$.
We often write this more compactly as $f \equiv 0$.
If every variable $x_1, \dots, x_n$ appears exactly once in every monomial of $f$ then
$f$ is called \textbf{multilinear} polynomial identity.
\end{definition}

We now explain the basic \textbf{fill-and-reduce} algorithm to find the multilinear polynomial identities 
of degree $n$ for an algebra $A$ 
of dimension $d < \infty$ over a field $\mathbb{F}$.
We choose a basis for $A$ and express elements of $A$ as vectors in $\mathbb{F}^d$.
In degree $n$ there are $t=t(n)$ association types and $n!$ permutations of the variables,
for a total of $tn!$ distinct monomials; we fix once and for all a total order on these monomials.
A polynomial identity $I( x_1, \dots, x_n )$ is a linear combination of these $tn!$ monomials,
with coefficients in $\mathbb{F}$.
Let $E(n)$ be a matrix with $tn!$ columns and $tn! + d$ rows, consisting of
a $tn! \times tn!$ upper block and a $d \times tn!$ lower block.
We generate $n$ pseudorandom elements $a_1, \dots, a_n \in A$.
We evaluate the $tn!$ monomials by setting $x_i = a_i$ ($i = 1, \dots, n$) and
obtain a sequence $r_j$ ($j = 1, \dots, tn!$) of elements of $A$.
For each $j$ we put the coefficient vector of $r_j$ into the $j$th column of the lower block.
The $d$ rows of the lower block consist of linear constraints on the coefficients of
the general multilinear polynomial identity $I( x_1, \dots, x_n )$.
We compute the row canonical form $\mathrm{RCF}(E(n))$, so the lower block becomes zero.
We repeat this process of generating pseudorandom elements of $A$, filling the lower block,
and reducing the matrix until the rank of $E(n)$ stabilizes.
At this point, we write $a$ for the nullity; the nullspace consists of the coefficient vectors of a canonical set 
of generators for the multilinear polynomial identities satisfying the constraints imposed at each step, that is,
the multilinear polynomial identities in degree $n$ satisfied by $A$.
We compute the canonical basis of the nullspace by setting the free variables equal to the standard basis vectors
and solving for the leading variables.
We then put these canonical basis vectors into another matrix of size $a \times tn!$, and compute its RCF,
which we denote by $[ \mathrm{All}(n) ]$.
We call the row space of this matrix $\mathrm{All}(n)$; this is the vector space of all multilinear identities
of degree $n$ satisfied by $A$.
This method is only practical when the number $tn!$ of monomials is relatively small.

\begin{example} \label{m22degree4}
We find the polynomial identities of degree 4 for $A = M_2(\mathbb{F})$,
the 4-dimensional associative algebra of $2 \times 2$ matrices over $\mathbb{F}$.
We construct a $28 \times 24$ zero matrix $E(4)$ and repeat the following process:
  \begin{itemize}
  \item
  generate pseudorandom $2 \times 2$ matrices $a_1, a_2, a_3, a_4$ over $\mathbb{F}$;
  \item
  evaluate $m^{(j)} = a_{p_j(1)} a_{p_j(2)} a_{p_j(3)} a_{p_j(4)}$ for all 
  $p_j \in S_4 = \{ p_1, \dots, p_{24} \}$;
  \item
  for $1 \le j \le 24$, store $m^{(j)}$ in the last 4 positions of column $j$ of $E(4)$:
  \[
  E(4)_{25,j} \leftarrow m^{(k)}_{11}, \;
  E(4)_{26,j} \leftarrow m^{(k)}_{12}, \;
  E(4)_{27,j} \leftarrow m^{(k)}_{21}, \; 
  E(4)_{28,j} \leftarrow m^{(k)}_{22};
  \]
  \item
  compute the row canonical form $\mathrm{RCF}(E(4))$.
  \end{itemize}
The first 6 iterations produce ranks 4, 8, 12, 16, 20, 23 and the rank remains 23 for the 
next 10 iterations.  
Hence the nullity is 1, and a basis for the nullspace consists of the coefficient vector
of the standard identity of degree $4$:
  \begin{equation}
  \label{standardidentity4}
  s_4(x_1,x_2,x_3,x_4) = \sum_{p \in S_4} \epsilon(p) \, x_{p(1)} x_{p(2)} x_{p(3)} x_{p(4)} \equiv 0.
  \end{equation}
This is the Amitsur-Levitzki identity from Theorem \ref{ALtheorem} in the case $n=2$.
\end{example}


\subsection{Consequences of polynomial identities in higher degrees}

When computing the multilinear polynomial identities satisfied by an algebra $A$,
we often find that many of the identities in degree $n$ are consequences of known identities of lower degrees,
so they do not provide any new information.
We want the identities in degree $n$ which cannot be expressed in terms of known identities of lower degrees.

\begin{definition} \label{liftingdefinition}
Let $I( x_1, \dots, x_n )$ be a multilinear nonassociative polynomial of degree $n$.
There are $n{+}2$ \textbf{consequences} of this polynomial in degree $n{+}1$, namely
$n$ substitutions obtained by replacing $x_i$ by $x_i x_{n+1}$ ($i = 1, \dots, n$)
and two multiplications of $I$ by $x_{n+1}$ (on the right and the left):
  \begin{align*}
  &
  I( x_1 x_{n+1}, \dots, x_n ),
  \quad \dots \quad
  I( x_1, \dots, x_i x_{n+1}, \dots, x_n ),
  \quad \dots \quad
  I( x_1, \dots, x_n x_{n+1} ),
  \\
  &
  I( x_1, \dots, x_i, \dots, x_n ) x_{n+1},
  \qquad
  x_{n+1} I( x_1, \dots, x_i, \dots, x_n ).
  \end{align*}
If $I \equiv 0$ is a polynomial identity for an algebra $A$, then so are its consequences.  
\end{definition}

\begin{lemma}
Every multilinear polynomial of degree $n{+}1$ in the $T$-ideal generated by $I(x_1,\dots,x_n) \in \mathbb{F}\{X\}$
is a linear combination of permutations of the $n{+}2$ consequences in Definition \ref{liftingdefinition}.
\end{lemma}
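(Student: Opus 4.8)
The plan is to write down an explicit spanning set for the multilinear degree-$(n{+}1)$ part of the $T$-ideal $\langle I\rangle$ generated by $I$, and then to observe that each element of that spanning set is, up to a permutation of $x_1,\dots,x_{n+1}$, one of the $n{+}2$ consequences in Definition \ref{liftingdefinition}.

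First I would give a concrete description of $\langle I\rangle$. A $T$-ideal is a subspace which is a two-sided ideal and is closed under all endomorphisms of $\mathbb{F}\{X\}$; since any endomorphism sends $I = I(x_1,\dots,x_n)$ to $I(g_1,\dots,g_n)$ for some polynomials $g_1,\dots,g_n$, a routine check shows that $\langle I\rangle$ is precisely the linear span of all polynomials
\[
C\bigl[\, I(g_1,\dots,g_n) \,\bigr],
\]
where $g_1,\dots,g_n \in \mathbb{F}\{X\}$ and $C[\,\cdot\,]$ ranges over all one-hole nonassociative monomial contexts whose remaining leaves are filled by arbitrary polynomials. Indeed this span contains $I$ (trivial context, identity substitution), is clearly a two-sided ideal, and is closed under endomorphisms because applying an endomorphism $\psi$ to such an element again yields an element of the same shape, with $C$ and each $g_i$ replaced by their $\psi$-images; minimality of $\langle I\rangle$ gives equality. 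Expanding each $g_i$ and each leaf of $C$ into monomials, $\langle I\rangle$ is in fact spanned by the \emph{multihomogeneous} polynomials $C[I(\mu_1,\dots,\mu_n)]$ in which every $\mu_i$ and every remaining leaf of $C$ is a single nonassociative monomial.

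Next I would extract the multilinear component of multidegree $(1,1,\dots,1)$ in $x_1,\dots,x_{n+1}$. Since $\mathbb{F}\{X\}$ is graded by multidegree in the variables, the corresponding projection $\Pi$ is linear and sends each of the multihomogeneous spanning elements above either to zero or to itself; hence it suffices to list those $C[I(\mu_1,\dots,\mu_n)]$ that are already multilinear of degree $n{+}1$ in $x_1,\dots,x_{n+1}$. This is a degree count. Because $I$ uses each of its $n$ arguments exactly once, $\deg I(\mu_1,\dots,\mu_n) = \sum_i \deg\mu_i$, which is at least $n$ and at most $n{+}1$, the remaining leaves of $C$ supplying the rest of the total degree $n{+}1$. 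If $\deg I(\mu_1,\dots,\mu_n) = n{+}1$, then $C$ is the trivial context, exactly one $\mu_i$ has degree $2$ and all other $\mu_j$ have degree $1$; a multilinear degree-$2$ monomial is $x_ax_b$ or $x_bx_a$, so the element equals $I(x_{c_1},\dots,x_ax_b,\dots,x_{c_n})$ with all variables distinct, which is the image under a suitable permutation of $x_1,\dots,x_{n+1}$ of the substitution consequence $I(x_1,\dots,x_ix_{n+1},\dots,x_n)$ (the $n$ choices of which argument slot carries the degree-$2$ monomial give the $n$ substitution consequences, and permuting $x_1,\dots,x_{n+1}$ accounts for both orders $x_ax_b$ and $x_bx_a$). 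If instead $\deg I(\mu_1,\dots,\mu_n) = n$, then every $\mu_i$ is a single variable, so $I(\mu_1,\dots,\mu_n)$ is a relabelling of $I$, and $C$ must have exactly one extra leaf, necessarily a single variable $x_c$, so $C$ is a binary product and the element is $I(x_{\sigma(1)},\dots,x_{\sigma(n)})\,x_c$ or $x_c\,I(x_{\sigma(1)},\dots,x_{\sigma(n)})$ with all variables distinct --- a permutation-image of $I(x_1,\dots,x_n)x_{n+1}$ or $x_{n+1}I(x_1,\dots,x_n)$. Thus every multilinear degree-$(n{+}1)$ spanning element of $\langle I\rangle$ is a permutation of one of the $n{+}2$ consequences, which is what the lemma asserts.

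I would close by noting the converse, which also follows for free: each consequence lies in $\langle I\rangle$ and a permutation of variables is an endomorphism, so every linear combination of permutations of the consequences is a multilinear degree-$(n{+}1)$ element of $\langle I\rangle$, giving equality of the two spaces. The only part that requires any care is the bookkeeping in the first two steps: getting the description of $\langle I\rangle$ right, and correctly tracking which multidegrees the context $C$ and the arguments $\mu_i$ are allowed to carry. Once that setup is in place, the classification is the short case analysis above; I do not expect any genuine obstacle.
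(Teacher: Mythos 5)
Your proof is correct and is essentially the rigorous expansion of the paper's one-paragraph sketch: the paper simply asserts that the $n$ substitutions come from endomorphism-invariance and the two multiplications from the ideal property, whereas you make that precise by writing down the spanning set $C[I(g_1,\dots,g_n)]$ for the $T$-ideal, reducing to multihomogeneous spanning elements, and using a degree count to show that the multilinear degree-$(n{+}1)$ part of that spanning set consists exactly of permutation-images of the $n{+}2$ consequences. Same idea, carried out in full detail.
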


\begin{proof}
By definition, the $T$-ideal generated by $I$ in $\mathbb{F}\{X\}$ is the ideal containing
$I$ which is invariant under all endomorphisms of $\mathbb{F}\{X\}$.
The $n$ substitutions correspond to invariance under endomorphisms, and the two multiplications
correspond to invariance under right and left multiplication.
\end{proof}

\begin{example} \label{alternative4}
The algebra $\mathbb{O}$ of octonions which we will study in detail later is an example of an alternative algebra.
Alternative algebras are defined by the left and right alternative identities
$(x,x,y) \equiv 0$ and $(x,y,y) \equiv 0$, where $(x,y,z) = (xy)z - x(yz)$ is the associator.
Over a field of characteristic $\ne 2$, these two identities are equivalent to their linearized forms:
  \[
  (x,z,y) + (z,x,y) \equiv 0,
  \qquad
  (x,y,z) + (x,z,y) \equiv 0.
  \]
Each of these identities has five consequences in degree 4; for example, from the left alternative identity we obtain
  \begin{align*}
  &
  (xw,z,y) + (z,xw,y) \equiv 0, \;\;
  (x,z,yw) + (z,x,yw) \equiv 0, \;\;
  (x,zw,y) + (zw,x,y) \equiv 0,
  \\
  &
  (x,z,y)w + (z,x,y)w \equiv 0, \;\;
  w(x,z,y) + w(z,x,y) \equiv 0.
  \end{align*}
\end{example}

The vector space $\mathrm{All}(n)$ of all multilinear polynomial identities of degree $n$ satisfied by an algebra $A$
is a subspace of the multilinear space of degree $n$ in the free nonassociative algebra $\mathbb{F}\{X\}$ where
$X = \{ x_1, \dots, x_n \}$.
Since $\mathrm{All}(n)$ is invariant under permutations of the variables, we can regard $\mathrm{All}(n)$ 
as a left $S_n$-module with action given by permuting the subscripts of the variables:
$\sigma \cdot f(x_1, \dots, x_n) = f(x_{\sigma(1)}, \dots, x_{\sigma(n)})$.
We can also consider $\mathrm{All}(n)$ as a submodule of $\mathrm{Bin}(n)$, the degree $n$ component of the operad 
$\mathrm{Bin}$ generated by one nonassociative binary operation with no symmetry. 
The operad $\mathrm{Bin}$ is non-symmetric, but the operad governing the algebra $A$ has the quotient 
$\mathrm{Bin}(n) / \mathrm{All}(n)$ as its $S_n$-module in degree $n$, and hence may be symmetric or non-symmetric,
depending on the properties of the identities satisfied by $A$.

For a given algebra $A$, the consequences in degree $n$ of the identities of degrees $< n$ generate a submodule 
$\mathrm{Old}(n) \subseteq \mathrm{All}(n)$.
We now explain the basic \textbf{module generators} algorithm to find a canonical set of $S_n$-module generators 
for $\mathrm{Old}(n)$.
We assume by induction that we have already determined a set of $S_{n-1}$-module generators for $\mathrm{All}(n{-}1)$.
The consequences of these generators in degree $n$ form a set $O(n)$ of $S_n$-module generators for $\mathrm{Old}(n)$.
We construct a $(tn!+n!) \times tn!$ matrix $C(n)$ consisting of a $tn! \times tn!$ upper block and a $n! \times tn!$ 
lower block (as before, $t=t(n)$ is the number of association types in degree $n$).
Using the lex order on permutations, we write $\sigma_i$ for the $i$th element of $S_n$.
We take an identity $I \in O(n)$ and for $i = 1, \dots,n!$ we put the coefficient vector of $\sigma \cdot I$ 
into the $i$th row of the lower block.
The $n!$ rows of the lower block then contain all the permutations of $I$, and hence they span the $S_n$-module
generated by $I$.
We compute $\mathrm{RCF}(C(n))$ so the lower block becomes zero.
We repeat this process for each $I \in O(n)$.
At the end, the nonzero rows of $\mathrm{RCF}(C(n))$ form a matrix $[ \mathrm{Old}(n) ]$ which contains the coefficient 
vectors of a canonical set of $S_n$-module generators for $\mathrm{Old}(n)$.

We compare the $S_n$-modules $\mathrm{Old}(n)$ and $\mathrm{All}(n)$ to determine whether there exist
new multilinear identities in degree $n$ satisfied by $A$; that is, identities which do not follow from those of 
degrees $< n$.
To do this, we compare the reduced matrices $[ \mathrm{Old}(n) ]$ and $[ \mathrm{All}(n) ]$;
we denote their ranks by $r_\text{old}$ and $r_\text{all}$.
If $r_\text{old} = r_\text{all}$ then we must have $[ \mathrm{Old}(n) ] = [ \mathrm{All}(n) ]$:
every identity in degree $n$ satisfied by $A$ follows from identities of lower degrees.
If $r_\text{old} \ne r_\text{all}$ then since $\mathrm{Old}(n) \subseteq \mathrm{All}(n)$ we must have
$r_\text{old} < r_\text{all}$, and the row space of $[ \mathrm{Old}(n) ]$ must be a subspace of the row space of 
$[ \mathrm{All}(n) ]$.
The difference $r_\text{all} - r_\text{old}$ is the dimension of the $S_n$-module of new identities in degree $n$.

\begin{definition}
The \textbf{new identities} satisfied by $A$ in degree $n$ are the nonzero elements of the quotient module 
$\mathrm{New}(n) = \mathrm{All}(n) / \mathrm{Old}(n)$.
\end{definition}

\begin{definition} \label{leading1s}
If $X$ is a matrix in RCF, we write $\texttt{leading}(X)$ for the set of ordered pairs
$(i,j)$ such that $X$ has a leading 1 in row $i$ and column $j$.
We write $\texttt{jleading}(X) = \{ \, j \mid (i,j) \in \texttt{leading}(X) \}$.
\end{definition}

We find $S_n$-module generators for $\mathrm{New}(n)$, by calculating the set difference 
  \[
  \texttt{jleading}([\mathrm{All}(n)]) \setminus \texttt{jleading}([\mathrm{Old}(n)])
  =
  \big\{ j_1, \dots, j_r \big\} \quad (r = r_\text{all} - r_\text{old}).
  \]
For $s = 1, \dots, r$ we define $i_s$ by $( i_s, j_s ) \in \texttt{leading}([\mathrm{All}(n)])$.

\begin{lemma} \label{newgenerators}
Rows $i_1, \dots, i_r$ of $[ \mathrm{All}(n) ]$ are the coefficient vectors of 
the canonical generators of $\mathrm{New}(n)$.
\end{lemma}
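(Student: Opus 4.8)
The plan is to prove that the images in $\mathrm{New}(n) = \mathrm{All}(n)/\mathrm{Old}(n)$ of the rows $i_1,\dots,i_r$ of $[\mathrm{All}(n)]$ form a vector-space basis of $\mathrm{New}(n)$; any basis is in particular a set of $S_n$-module generators, and the selection is canonical because the RCF is unique. The one auxiliary fact I would record first is the following property of a matrix $M$ in RCF with row space $U$: for every nonzero $u \in U$, the index of the first nonzero entry of $u$ belongs to $\texttt{jleading}(M)$. This is immediate on writing $u$ as a linear combination of the rows of $M$ and inspecting the row of least pivot index occurring with nonzero coefficient: that pivot column carries a single nonzero entry of $M$ (by the RCF property), rows of smaller pivot index enter with coefficient $0$, and rows of larger pivot index vanish in that column, so the first nonzero entry of $u$ sits exactly in that pivot column.

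Applying this to $U = \mathrm{All}(n)$, together with the inclusion $\mathrm{Old}(n) \subseteq \mathrm{All}(n)$ (consequences of identities of $A$ are again identities of $A$, Definition \ref{liftingdefinition}), I would argue that every row of $[\mathrm{Old}(n)]$ lies in $\mathrm{All}(n)$ and hence its pivot column belongs to $\texttt{jleading}([\mathrm{All}(n)])$; thus $\texttt{jleading}([\mathrm{Old}(n)]) \subseteq \texttt{jleading}([\mathrm{All}(n)])$. Consequently the set $\{j_1,\dots,j_r\}$ of the statement really has $r = r_\text{all} - r_\text{old}$ elements, and this number equals $\dim \mathrm{New}(n)$ since the row spaces of $[\mathrm{All}(n)]$ and $[\mathrm{Old}(n)]$ are $\mathrm{All}(n)$ and $\mathrm{Old}(n)$ of dimensions $r_\text{all}$ and $r_\text{old}$. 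It therefore suffices to check that the rows $i_1,\dots,i_r$ of $[\mathrm{All}(n)]$ are linearly independent modulo $\mathrm{Old}(n)$.

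For the latter I would suppose, writing $\rho_i$ for the $i$th row of $[\mathrm{All}(n)]$, that some combination $v = \sum_s c_s \rho_{i_s}$ lies in $\mathrm{Old}(n)$ with not all $c_s$ zero, and choose $s_0$ with $c_{s_0} \ne 0$ and $j_{s_0}$ minimal among $\{\, j_s \mid c_s \ne 0 \,\}$. Every $\rho_{i_s}$ with $c_s \ne 0$ and $s \ne s_0$ has pivot column $j_s > j_{s_0}$ and so is zero in all columns $\le j_{s_0}$; and since $j_{s_0}$ is a pivot column of the RCF matrix $[\mathrm{All}(n)]$, its only nonzero entry there is the $1$ in row $i_{s_0}$. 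Hence $v$ vanishes in all columns before $j_{s_0}$ and equals $c_{s_0} \ne 0$ in column $j_{s_0}$, so the first nonzero entry of $v$ is in column $j_{s_0}$. But $v \in \mathrm{Old}(n)$, so the auxiliary fact forces $j_{s_0} \in \texttt{jleading}([\mathrm{Old}(n)])$, contradicting $j_{s_0} \in \texttt{jleading}([\mathrm{All}(n)]) \setminus \texttt{jleading}([\mathrm{Old}(n)])$. Therefore all $c_s = 0$, the $r$ rows $i_1,\dots,i_r$ are independent modulo $\mathrm{Old}(n)$, and since $r = \dim \mathrm{New}(n)$ their images form a basis of $\mathrm{New}(n)$, as claimed.

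The only step that is more than bookkeeping is the RCF property of the first paragraph; once it is available, the rest is the short pivot-column argument above. A minor point to be careful about is that $[\mathrm{All}(n)]$ and $[\mathrm{Old}(n)]$ may carry trailing zero rows, but these do not affect $\texttt{leading}$, $\texttt{jleading}$, or the row spaces, so the indices $i_s$ are well defined and the argument is unaffected.
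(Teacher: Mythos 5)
The paper states this lemma without proof, treating it as an immediate consequence of the definitions just given, so there is no authorial argument to compare against; your proof is correct and is the standard argument one would want to record. The one substantive ingredient is your auxiliary RCF fact (the first nonzero coordinate of any nonzero vector in the row space of a matrix in RCF lies in a pivot column of that matrix), which you prove correctly; from it you deduce $\texttt{jleading}([\mathrm{Old}(n)]) \subseteq \texttt{jleading}([\mathrm{All}(n)])$, the count $r = r_{\text{all}} - r_{\text{old}} = \dim \mathrm{New}(n)$, and the linear independence of rows $i_1,\dots,i_r$ modulo $\mathrm{Old}(n)$ by the usual least-pivot argument. You in fact prove slightly more than the lemma asserts, namely that the images of these rows form a vector-space basis of $\mathrm{New}(n)$, which of course implies they are $S_n$-module generators, and the uniqueness of the RCF gives the canonicity. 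One small remark: since the paper defines $[\mathrm{All}(n)]$ as the RCF of an $a \times tn!$ matrix whose rows already form a basis of the nullspace, and $[\mathrm{Old}(n)]$ as the nonzero rows of an RCF, neither matrix carries trailing zero rows, so the caveat in your final paragraph is unnecessary, though harmless.
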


\begin{example} \label{m22degree5}
To illustrate these concepts, we extend the results of Example \ref{m22degree4} regarding $2 \times 2$ matrices
from degree 4 to degree 5.

To find all the identities we proceed as before, with some obvious changes: 
the matrix $E(5)$ has size $124 \times 120$; each iteration generates 5 pseudorandom matrices;
there are 120 permutations to evaluate.
We find that the rank increases by 4 for each of the first 22 iterations, but the next iteration produces rank 91,
and this remains constant for the next 10 iterations.
Thus the nullspace of $E(5)$ has dimension 29; this is the $S_5$-module $\mathrm{All}(5)$, 
consisting of the coefficient vectors of all identities in degree 5 satisfied by $2 \times 2$ matrices.

To find which of these identities are new, we need to generate all the consequences in degree 5 of the standard 
identity \eqref{standardidentity4}.
Every consequence is a linear combination of permutations of these 6 generators:
  \[
  \begin{array}{lll}
  s_4(x_1x_5,x_2,x_3,x_4), &\qquad
  s_4(x_1,x_2x_5,x_3,x_4), &\qquad
  s_4(x_1,x_2,x_3x_5,x_4),
  \\[2pt]
  s_4(x_1,x_2,x_3,x_4x_5), &\qquad
  x_5 s_4(x_1,x_2,x_3,x_4), &\qquad
  s_4(x_1,x_2,x_3,x_4) x_5.
  \end{array}
  \]  
We construct a $240 \times 120$ zero matrix $C(5)$ and do the following for each generator:
  \begin{itemize}
  \item
  Set $i \leftarrow 120$.
  \item
  For each permutation $p \in S_5$ do:
    \begin{itemize}
    \item
    Set $i \leftarrow i + 1$.
    \item
    For each term $c m$ in the generator, where $c = \pm 1$, $m = x_{q(1)} \cdots x_{q(5)}$,
    let $j$ be the index of $pq$ in the lex-ordering on $S_5$, and set $C(5)_{ij} \leftarrow c$.
    \end{itemize}
  \item
  Compute the row canonical form $\mathrm{RCF}(C(5))$.
  \end{itemize}
After all 6 generators have been processed, the rank of $C(5)$ is 24; its row space is the $S_5$-module
$\mathrm{Old}(5)$.
Combining this result with that of the previous paragraph, we see that the quotient module $\mathrm{New}(5)$
has dimension 5.

It remains to find generators for $\mathrm{New}(5)$.
From $\mathrm{RCF}(E(5))$ we extract a basis for its nullspace, and 
sort these 29 vectors by increasing Euclidean norm (from 18 to 74).
Starting with $\mathrm{RCF}(C(5))$ we apply the same module generators algorithm to these 29 vectors,
and find that the first vector increases the rank from 24 to 29.
Hence (the coset of) this single vector is a generator for $\mathrm{New}(5)$;
this vector has 18 (nonzero) terms, and all coefficients are $\pm 1$.

We can obtain slightly better results using the LLL algorithm for lattice basis reduction \cite{BP2009b}.
This depends on the fact that the nonzero entries of $\mathrm{RCF}(E(5))$ are all integers ($\pm 1, \pm 2$).
We compute a $120 \times 120$ integer matrix $U$ with determinant $\pm 1$ such that $U E(5)^t$ is 
the Hermite normal form of the transpose of $E(5)$. 
Then the bottom 29 rows of $U$ form a lattice basis for the integer nullspace of $E(5)$.
We sort these vectors by increasing Euclidean norm (from 16 to 34), and proceed as in the previous paragraph.
The first vector increases the rank from 24 to 29, and is the coefficient vector of the linearization of 
the Hall identity:
  \[
  [ \, [x_1,x_2] \circ [x_3,x_4], \, x_5 \, ] \equiv 0,
  \]
where $[x,y] = xy - yx$ is the Lie bracket and $x \circ y = xy + yx$ is the Jordan product.
Drensky \cite{Drensky1981} has shown that $s_4 \equiv 0$ and the Hall identity $[[x,y]^2,z] \equiv 0$ generate 
the $T$-ideal of identities satisfied by $2 \times 2$ matrices over fields of characteristic 0.
\end{example}


\subsection{Representations of $S_n$ and multilinear identities in degree $n$}

We can use the representation theory of the symmetric group to break down these computations into smaller pieces,
one for each irreducible representation of $S_n$.
This significantly reduces the sizes of the matrices involved.

Fix $\lambda \vdash n$ with irreducible representation of dimension $d_\lambda$.
Let $E^\lambda_{ij}$ for $i, j = 1, \dots, d_\lambda$ be the $d_\lambda \times d_\lambda$ matrix units.
It suffices to consider only the matrix units in the first row, in the following sense.

\begin{lemma} \label{firstrow}
Let $M_\lambda$ be an irreducible submodule of type $\lambda$ in the left regular representation $\mathbb{F} S_n$.
Then there exists a generator $f \in M_\lambda$ such that its matrix form $\phi_\lambda(f)$ is in RCF and has rank 1
(the only nonzero row is the first).
\end{lemma}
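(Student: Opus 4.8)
The plan is to push the statement into the matrix algebra $M_{d_\lambda}(\mathbb{F})$ via the Wedderburn isomorphism, recognize the image of $M_\lambda$ as a minimal left ideal, observe that such an ideal consists of rank-one matrices sharing one common ``row direction,'' and then scale an explicit element of it into RCF and pull that element back into $\mathbb{F}S_n$. Write $d=d_\lambda$ and let $\phi_\lambda\colon\mathbb{F}S_n\to M_d(\mathbb{F})$ be the component of $\phi$ indexed by $\lambda$; it is a surjective algebra homomorphism. \textbf{Reduction to $M_d(\mathbb{F})$.} The identity elements $e^\mu$ of the two-sided ideals $N^\mu$ are central idempotents of $\mathbb{F}S_n$ with $\sum_\mu e^\mu=1$, so $M_\lambda=\bigoplus_\mu e^\mu M_\lambda$ with each $e^\mu M_\lambda$ a submodule; since $M_\lambda$ is simple exactly one summand is nonzero, and as any simple submodule of $N^\mu\cong M_{d_\mu}(\mathbb{F})$ has type $\mu$, that summand is the one with $\mu=\lambda$. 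Hence $M_\lambda\subseteq N^\lambda$. By Theorem \ref{phi-inverse}, $\phi_\lambda$ is injective on $N^\lambda$, so it restricts to an isomorphism of left modules $M_\lambda\to L$, where $L:=\phi_\lambda(M_\lambda)$ is a left ideal of $M_d(\mathbb{F})$ with $\dim_{\mathbb{F}}L=\dim_{\mathbb{F}}M_\lambda=d$.

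\textbf{Shape of $L$.} Since $M_\lambda$ is simple, so is $L$; thus $L=M_d(\mathbb{F})\,B$ for any nonzero $B\in L$. A dimension count finishes this step: the linear map $A\mapsto AB$ on $M_d(\mathbb{F})$ has kernel $\{A : \mathrm{colspace}(B)\subseteq\ker A\}$, which has dimension $d(d-r)$ with $r=\mathrm{rank}(B)$, so $\dim L=dr$; comparing with $\dim L=d$ forces $r=1$. Therefore every nonzero element of $L$ has rank exactly $1$. Writing $B=u_0 w$ with $u_0$ a nonzero column vector and $w$ a nonzero row vector, and using that $\{A u_0 : A\in M_d(\mathbb{F})\}=\mathbb{F}^d$, we get $L=\{\,u w : u\in\mathbb{F}^d\,\}$.

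\textbf{The generator.} Let $j_0$ be the position of the first nonzero entry of $w$, and $c$ its value; put $N=c^{-1}e_1 w$, where $e_1$ is the first coordinate (column) vector, so $N\in L$. Then $N$ is a rank-one matrix whose only nonzero row is the first, with leading $1$ in column $j_0$, and all entries below it zero; hence $N$ is in RCF. Let $f\in M_\lambda$ be the preimage of $N$ under $\phi_\lambda|_{M_\lambda}$. Since $N\ne 0$ we have $f\ne 0$, so $\mathbb{F}S_n\,f$ is a nonzero submodule of the simple module $M_\lambda$ and therefore equals $M_\lambda$; thus $f$ generates $M_\lambda$, and by construction $\phi_\lambda(f)=N$ has exactly the asserted form.

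The only step with genuine content is the middle one: knowing precisely that the minimal left ideals of a full matrix algebra are the spaces of rank-one matrices with a fixed right factor. One may replace the rank/dimension count by an appeal to primitive idempotents ($L=M_d(\mathbb{F})e$ with $\mathrm{rank}(e)=1$), but the argument above is self-contained and elementary. Everything else — the block reduction of the simple submodule $M_\lambda$, the normalization of $N$ into RCF, and the fact that any nonzero element of a simple module generates it — is routine.
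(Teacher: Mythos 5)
Your proof is correct, and it takes a genuinely different route from the paper's. The paper argues concretely inside $\mathbb{F}S_n$: it recalls that the matrix units $U^\lambda_{ij}$ in row $i$ are linear combinations of the $E_i s_{ij}$, observes via Proposition \ref{propconj} that left-multiplication by $s_{1i}$ sends $E_i s_{ij}$ to $E_1 s_{1j}$ and hence $U^\lambda_{ij}$ to $U^\lambda_{1j}$, and then asserts (rather tersely) that the remaining rows vanish by irreducibility. You instead push $M_\lambda$ through $\phi_\lambda$ and do pure ring theory in $M_d(\mathbb{F})$: identify $\phi_\lambda(M_\lambda)$ as a $d$-dimensional, hence minimal, left ideal $L$; prove by a rank/nullity count on $A\mapsto AB$ that $L=\{uw:u\in\mathbb{F}^d\}$ for a fixed row vector $w$; and then pick $N=c^{-1}e_1w$, which is manifestly in RCF with only the first row nonzero. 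Your version makes explicit the fact that the paper's ``other rows are zero by the irreducibility assumption'' is really resting on — namely that a $d$-dimensional left ideal of $M_d(\mathbb{F})$ consists exactly of matrices whose rows all lie on one line — whereas the paper's version stays inside the explicit Young machinery and shows more directly how the group element $s_{1i}$ implements the row move. Both are valid; yours is more self-contained and fills a gap the paper leaves to the reader, at the cost of losing the explicit group-algebra computation that the paper later reuses.

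One minor point: when you reduce to $M_\lambda\subseteq N^\lambda$ via the central idempotents $e^\mu=\psi(I_{d_\mu})$, it is worth saying that these are central and orthogonal with $\sum_\mu e^\mu=1$ because $\psi$ is an algebra isomorphism sending the block identities of $M$ to $\mathbb{F}S_n$; you use this silently. Everything else is airtight.
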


\begin{proof}
In the left regular representation, row $i$ can be moved to row 1 by left-multiplying by the element of 
$\mathbb{F} S_n$ which is the image under $\psi$ of the elementary matrix which transposes row 1 and row $i$.
Recall that the matrix units in row $i$ are linear combinations of the elements $E_i s_{ij}$:
combine Definitions \ref{defbigeps}, \ref{defU}, \ref{defpsi} and equation \eqref{defalpha}.
We can left-multiply by any $p \in S_n$ and obtain another element in the same matrix algebra. 
In particular, if $p = s_{1i}$ then using Proposition \ref{propconj} we obtain 
$s_{1i} E_i s_{ij} = E_1 s_{1i} s_{ij} = E_1 s_{1j}$.
Thus left-multiplication by $s_{1i}$ moves the matrix units in row $i$ to row 1.
The other rows are zero by the irreducibility assumption.
\end{proof}

Let $t = t(n)$ be the number of association types in degree $n$.
In the direct sum of $t$ copies of the left regular representation,
the $\lambda$-component is isomorphic to the direct sum of $t$ copies of the full matrix algebra $M_{d_\lambda}(\mathbb{F})$.
We construct a matrix $M$ of size $( t d_\lambda + d ) \times t d_\lambda$, consisting of an upper block
of size $t d_\lambda \times t d_\lambda$ and a lower block of size $d \times t d_\lambda$.
The multilinear associative polynomial $U^\lambda_{1j}$ of degree $n$ is the image under $\psi$ 
of the matrix unit $E^\lambda_{1j}$. 

\begin{definition}
For $k = 1, \dots, t$ we write $[ U^\lambda_{1j} ]_k$ for the multilinear nonassociative
polynomial obtained by applying association type $k$ to every term of $U^\lambda_{1j}$.
\end{definition}

Given $n$ pseudorandom elements of the algebra $A$, we can evaluate $[ U^\lambda_{1j} ]_k$ 
using the structure constants of $A$ to obtain another element of $A$.
We do this for each $k = 1, \dots, t$ and each $j = 1, \dots, d_\lambda$ to obtain a sequence of $t d_\lambda$ elements of $A$,
which we regard as column vectors of dimension $d$.
We store each of these column vectors in the corresponding column of the lower block of $M$,
and then compute the $\mathrm{RCF}(M)$.
We repeat this fill-and-reduce process until the rank of $M$ stabilizes;
at this point, the nullspace of $M$ contains the coefficient vectors of 
the polynomial identities satisfied by $A$ in the component of
$( \mathbb{F} S_n )^t$ corresponding to partition $\lambda$.
We compute the canonical basis of the nullspace, and call its dimension $a_\lambda$.
We put the basis vectors into another matrix of size $a_\lambda \times t d_\lambda$, and compute its RCF.
This matrix, denoted $\texttt{allmat}(\lambda)$, 
contains the canonical form of the polynomial identities for $A$ in partition $\lambda$.

We need to compare $\texttt{allmat}(\lambda)$ with the representation matrix for the consequences 
of known identities from lower degrees.
We construct a matrix of size $\ell d_\lambda \times t d_\lambda$ consisting of $d_\lambda \times d_\lambda$ blocks
where $\ell$ is the number of consequences.
The block in position $(i,j)$ where $i = 1, \dots, \ell$ and $j = 1, \dots, t$ is the representation matrix
for the terms of $i$th consequence in association type $j$.
We compute the RCF of this matrix, and call its rank $o_\lambda$.
We denote the resulting $o_\lambda \times t d_\lambda$ matrix of full rank by $\texttt{oldmat}(\lambda)$;
this contains the canonical form of all the consequences in partition $\lambda$.

Since the row space of $\texttt{oldmat}(\lambda)$ is a subspace of the row space of $\texttt{allmat}(\lambda)$
we have $o_\lambda \le a_\lambda$.
Furthermore, $\texttt{oldmat}(\lambda) = \texttt{allmat}(\lambda)$ if and only if $o_\lambda = a_\lambda$; 
in this case, there are no new identities for the algebra $A$ in partition $\lambda$.
We have
  \[
  \texttt{jleading}( \, \texttt{oldmat}(\lambda) \, )
  \, \subseteq \,
  \texttt{jleading}( \, \texttt{allmat}(\lambda) \, ).
  \]
The rows of $\texttt{allmat}(\lambda)$ whose leading 1s occur in the columns with indices in
  \[
  \texttt{jleading}( \, \texttt{allmat}(\lambda) \, )
  \setminus
  \texttt{jleading}( \, \texttt{oldmat}(\lambda) \, ),
  \]
represent new identities for the algebra $A$ in partition $\lambda$.
(This is the representation theoretic version of Lemma \ref{newgenerators}.)

Consider one of the rows representing a new identity:
  \[
  [ \,
  c^\lambda_{11}, \dots, c^\lambda_{1d_\lambda},
  \quad \dots, \quad
  c^\lambda_{k1}, \dots, c^\lambda_{kd_\lambda},
  \quad \dots, \quad
  c^\lambda_{t1}, \dots, c^\lambda_{td_\lambda}
  \, ]
  \qquad
  (1 \le k \le t).
  \]
As explained, we may assume that this is row 1 of the matrix, and so we can regard it as representing
a linear combination of the elements $[ U^\lambda_{1j} ]_k$ where $1 \le k \le t$ and $1 \le j \le d_\lambda$, 
which gives an explicit form of the new identity:
  \[
  \sum_{k=1}^t
  \sum_{j=1}^{d_\lambda}
  c^\lambda_{k,j} \,
  [ U^\lambda_{1j} ]_k
  \equiv 0.
  \]
In general, identities of this form have a very large number of terms, when fully expanded as elements of
$\mathbb{F}S_n$, especially when $n$ becomes large.


\subsection{The membership problem for  $T$-ideals}

A basic question about polynomial identities satisfied by an algebra is the following.

\begin{problem} \label{basicproblem}
Let $f^1, \dots, f^k$ and $f$ be multilinear polynomial identities of degree $n$ satisfied by an algebra $A$. 
Does $f$ belong to the $S_n$-module generated by $f^1, \dots , f^k$?
Equivalently, is $f$ a linear combination of permutations of $f^1, \dots , f^k$?
\end{problem}

Let $\phi_\lambda \colon \mathbb{F} \, S_n \to  M_{d_\lambda}( \mathbb{F} )$ be the projection onto
the $\lambda$-component in the Wedderburn decomposition \eqref{iso}.
Let $f = f_1 + \cdots + f_t$ be the decomposition of $f \in (\mathbb{F}S_n)^t$ into terms corresponding 
to the $t = t(n)$ association types.

\begin{definition} \label{philambda}
The \textbf{representation matrix} of $f$ for $\lambda$ equals:
  \[
  \phi_\lambda( f )
  =
  \big[ \quad
  \phi_\lambda(f_1)
  \quad \vert \quad
  \phi_\lambda(f_2)
  \quad \vert \quad
  \cdots
  \quad \vert \quad
  \phi_\lambda(f_{t-1})
  \quad \vert \quad
  \phi_\lambda(f_t)
  \quad \big ]
  \]
More generally, the representation matrix for a sequence of identities $f^1, \dots , f^k$ is obtained
by stacking the matrices $\phi_\lambda(f^1), \dots, \phi_\lambda(f^k)$:
  \[
  \phi_\lambda( f^1, \dots , f^k )
  =
  \left[
  \begin{array}{c}
  \phi_\lambda(f^1) \\ \phi_\lambda(f^2) \\ \vdots \\ \phi_\lambda(f^k)
  \end{array}
  \right]
  =
  \left[
  \begin{array}{ccccc}
  \phi_\lambda(f^1_1) & \phi_\lambda(f^1_2) & \cdots & \phi_\lambda(f^1_{t-1}) & \phi_\lambda(f^1_{t})  
  \\[6pt]
  \phi_\lambda(f^2_1)   & \phi_\lambda(f^2_2)  & \cdots  & \phi_\lambda(f^2_{t-1})  & \phi_\lambda(f^2_{t}) 
  \\
  \vdots & \vdots & \ddots & \vdots & \vdots 
  \\
  \phi_\lambda(f^k_1)  & \phi_\lambda(f^k_2)  & \cdots  & \phi_\lambda(f^k_{t-1})  & \phi_\lambda(f^k_{t})  
  \end{array}
  \right]
  \]
\end{definition}

\begin{proposition} \label{Solutionofbasicproblem}
Let $f^1, \dots, f^k$ and $f$ be multilinear polynomial identities of degree $n$.
Then the following conditions are equivalent:
  \begin{itemize}
  \item
  $f$ belongs to the $S_n$-module generated by $f^1, \dots , f^k$
  \item
  the matrices $\phi_\lambda( f^1, \dots , f^k )$ and $\phi_\lambda( f^1, \dots , f^k, f )$ have the same row space
  \item
  the matrices $\phi_\lambda( f^1, \dots , f^k )$ and $\phi_\lambda( f^1, \dots , f^k, f )$ have the same RCF
  \item
  the matrices $\phi_\lambda( f^1, \dots , f^k )$ and $\phi_\lambda( f^1, \dots , f^k, f )$ have the same rank  
  \end{itemize}
\end{proposition}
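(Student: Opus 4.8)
The plan is to translate everything through the isomorphism and reduce to a statement about row spaces of matrices. First I would recall from Theorem~\ref{phi-inverse} that $\phi = \psi^{-1} = \bigoplus_{\lambda\vdash n}\phi_\lambda$ is an isomorphism of algebras $\mathbb{F}S_n \to \bigoplus_\lambda M_{d_\lambda}(\mathbb{F})$, and that by Clifton's theorem $\phi_\lambda(p) = R^\lambda(p)$ for $p\in S_n$. Extending $\phi$ coordinatewise to $t=t(n)$ copies, a multilinear identity $f=f_1+\cdots+f_t$ has $\lambda$-image the $d_\lambda\times td_\lambda$ matrix $\phi_\lambda(f)=[\,\phi_\lambda(f_1)\mid\cdots\mid\phi_\lambda(f_t)\,]$ of Definition~\ref{philambda}. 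The $S_n$-module generated by $f^1,\dots,f^k$ is the $\mathbb{F}S_n$-submodule $N=\{\sum_i g_i\cdot f^i : g_i\in\mathbb{F}S_n\}$, since permuting subscripts is the diagonal left-multiplication action on $(\mathbb{F}S_n)^t$; and for $g\in\mathbb{F}S_n$ one has $\phi_\lambda(g\cdot f^i)=\phi_\lambda(g)\,\phi_\lambda(f^i)$, i.e.\ left multiplication of the entire $d_\lambda\times td_\lambda$ matrix $\phi_\lambda(f^i)$ by the $d_\lambda\times d_\lambda$ matrix $\phi_\lambda(g)$, one and the same matrix across all $t$ blocks.

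Next I would prove the elementary lemma that for fixed $d\times m$ matrices $M_1,\dots,M_k$, the set $\{\sum_i B_iM_i : B_i\in M_d(\mathbb{F})\}$ is exactly the set of all $d\times m$ matrices whose rows lie in $\mathrm{rowspace}(M_1)+\cdots+\mathrm{rowspace}(M_k)$: each row of $\sum_i B_iM_i$ is a linear combination of rows of the $M_i$, and conversely any matrix with such rows is realized by choosing the $B_i$ row by row. Since $\phi_\lambda$ maps $\mathbb{F}S_n$ onto all of $M_{d_\lambda}(\mathbb{F})$, applying this with $M_i=\phi_\lambda(f^i)$ shows that $\phi_\lambda(N)$ is precisely the set of $d_\lambda\times td_\lambda$ matrices whose rows lie in the row space of $\phi_\lambda(f^1,\dots,f^k)$. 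This yields (1)$\Leftrightarrow$(2): if $f\in N$ then for every $\lambda$ the rows of $\phi_\lambda(f)$ lie in that row space, so $\phi_\lambda(f^1,\dots,f^k,f)$, which is $\phi_\lambda(f^1,\dots,f^k)$ with the rows of $\phi_\lambda(f)$ appended, has the same row space; conversely, if the row spaces agree for every $\lambda$, the lemma provides $B_i^\lambda\in M_{d_\lambda}(\mathbb{F})$ with $\phi_\lambda(f)=\sum_i B_i^\lambda\phi_\lambda(f^i)$ for each $\lambda$, and surjectivity of $\phi$ lets us pick $g_i\in\mathbb{F}S_n$ with $\phi_\lambda(g_i)=B_i^\lambda$ simultaneously for all $\lambda$; then $\phi(\sum_i g_if^i)=\phi(f)$ and injectivity of $\phi$ gives $f=\sum_i g_if^i\in N$.

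Finally (2)$\Leftrightarrow$(3)$\Leftrightarrow$(4) is routine: since $\phi_\lambda(f^1,\dots,f^k)$ consists of the top rows of $\phi_\lambda(f^1,\dots,f^k,f)$, the row space of the larger matrix contains that of the smaller, so they coincide exactly when they have the same dimension (rank), and exactly when they have the same RCF, the RCF being a canonical form for the row space. The main point requiring care — rather than a genuine obstacle — is the "glueing" in the converse of (1)$\Leftrightarrow$(2): the matrices $B_i^\lambda$ are produced independently for each partition $\lambda$, and it is precisely the surjectivity of the \emph{single} isomorphism $\phi$ onto $\bigoplus_\lambda M_{d_\lambda}(\mathbb{F})$ that assembles them into honest elements $g_i\in\mathbb{F}S_n$; one must also keep distinct the outer direct sum over $\lambda$ (on which $\phi$ is an isomorphism) and the inner block structure over the $t$ association types (on which the $S_n$-action is diagonal, by a common matrix).
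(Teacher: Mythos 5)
Your proof is correct, and it is more detailed than the paper, which states Proposition~\ref{Solutionofbasicproblem} without proof (treating it as a direct consequence of the Wedderburn isomorphism $\phi$ established in Part~1). Your argument follows exactly the route the paper's framework intends: identify the $S_n$-action with diagonal left multiplication by $\mathbb{F}S_n$ on $(\mathbb{F}S_n)^t$, push through $\phi_\lambda$ to turn module membership into a row-space condition via your matrix lemma, and use surjectivity/injectivity of the global isomorphism $\phi$ to glue the per-$\lambda$ data back into a single element of $\mathbb{F}S_n$. The one place the paper's reader must trust the authors is precisely your ``glueing'' remark --- the independent $B_i^\lambda$ really do assemble into honest $g_i \in \mathbb{F}S_n$ because $\phi$ is onto the full direct sum --- and you handle it cleanly. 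The reduction (2)$\Leftrightarrow$(3)$\Leftrightarrow$(4) via containment of row spaces is exactly the standard fact the paper relies on throughout Section~\ref{allidentities} when it compares ranks of \texttt{oldmat} and \texttt{allmat}.
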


\begin{example} \label{altdeg4}
Every alternative algebra $A$ satisfies the multilinear identity
  \[
  f(x,y,z,t) = (xy,z,t)  + (x,y,[z,t]) - x(y,z,t) - (x,z,t)y \equiv 0.
  \]
To prove this we need to verify that $f$ is a consequence of the alternative laws.
Assuming $\mathrm{char}(\mathbb{F}) \ne 2$, the alternative laws are equivalent to their linearizations:
  \[
  (x,y,z) + (y,x,z) \equiv 0, \qquad\qquad (x,y,z) + (x,z,y) \equiv 0.
  \]
The consequences of these identities in degree 4 are as follows;
some follow from others using the alternative laws:
  \begin{alignat*}{2}
  f^1 = (xt,y,z) + (y,xt,z) &\equiv 0, &\qquad\qquad f^6 = (xt,y,z) + (xt,z,y) &\equiv 0, \\
  f^2 = (x,yt,z) + (yt,x,z) &\equiv 0, &\qquad\qquad f^7 = (x,yt,z) + (x,z,yt) &\equiv 0, \\
  f^3 = (x,y,zt) + (y,x,zt) &\equiv 0, &\qquad\qquad f^8 = (x,y,zt) + (x,zt,y) &\equiv 0, \\
  f^4 = (x,y,z)t + (y,x,z)t &\equiv 0, &\qquad\qquad f^9 = (x,y,z)t + (x,z,y)t &\equiv 0, \\
  f^5 = t(x,y,z) + t(y,x,z) &\equiv 0, &\qquad\qquad f^{10} = t(x,y,z) + t(x,z,y) &\equiv 0.
  \end{alignat*}
In degree 4, there are $t = 5$ association types. 
For each $\lambda \vdash 4$ we use Clifton's algorithm to calculate the matrices 
  \[
  M_\lambda = \phi_\lambda( f^1, \dots , f^{10} ),
  \qquad
  N_\lambda = \phi_\lambda( f^1, \dots , f^{10}, f ),
  \]
and compute their RCFs.
For example, when $\lambda = 22$ we have $d_\lambda = 2$ and so the matrix $M_\lambda$ has size
$20 \times 10$ and $N_\lambda$ has size $22 \times 10$.
We display $N_\lambda$ and its RCF, which coincides with the RCF of $M_\lambda$:

  \smallskip
  
  \begin{center}
  \begin{minipage}{\linewidth}
  \small
  \[
  \left[
  \begin{array}{rr|rr|rr|rr|rr}
  -1 &\nnn  1 &\nn  1 &\nnn  0 &\nn  1 &\nnn -1 &\nn -1 &\nnn  0 &\nn  0 &\nnn  0 \\
  -1 &\nnn  0 &\nn  0 &\nnn  1 &\nn  1 &\nnn  0 &\nn  0 &\nnn -1 &\nn  0 &\nnn  0 \\ \midrule
   0 &\nnn  1 &\nn  1 &\nnn -1 &\nn  0 &\nnn -1 &\nn -1 &\nnn  1 &\nn  0 &\nnn  0 \\
   1 &\nnn  0 &\nn  0 &\nnn -1 &\nn -1 &\nnn  0 &\nn  0 &\nnn  1 &\nn  0 &\nnn  0 \\ \midrule
   0 &\nnn  0 &\nn  0 &\nnn  0 &\nn  2 &\nnn -1 &\nn  0 &\nnn  0 &\nn -2 &\nnn  1 \\
   0 &\nnn  0 &\nn  0 &\nnn  0 &\nn  0 &\nnn  0 &\nn  0 &\nnn  0 &\nn  0 &\nnn  0 \\ \midrule
   2 &\nnn -1 &\nn -2 &\nnn  1 &\nn  0 &\nnn  0 &\nn  0 &\nnn  0 &\nn  0 &\nnn  0 \\
   0 &\nnn  0 &\nn  0 &\nnn  0 &\nn  0 &\nnn  0 &\nn  0 &\nnn  0 &\nn  0 &\nnn  0 \\ \midrule
   0 &\nnn  0 &\nn  0 &\nnn  0 &\nn  0 &\nnn  0 &\nn -1 &\nnn -1 &\nn  1 &\nnn  1 \\
   0 &\nnn  0 &\nn  0 &\nnn  0 &\nn  0 &\nnn  0 &\nn  0 &\nnn  0 &\nn  0 &\nnn  0 \\ \midrule
  -2 &\nnn  1 &\nn  0 &\nnn  0 &\nn  2 &\nnn -1 &\nn  0 &\nnn  0 &\nn  0 &\nnn  0 \\
  -2 &\nnn  1 &\nn  0 &\nnn  0 &\nn  2 &\nnn -1 &\nn  0 &\nnn  0 &\nn  0 &\nnn  0 \\ \midrule
   0 &\nnn  0 &\nn  1 &\nnn -1 &\nn  0 &\nnn  1 &\nn -1 &\nnn  1 &\nn  0 &\nnn -1 \\
   0 &\nnn  0 &\nn  0 &\nnn -1 &\nn  1 &\nnn  0 &\nn  0 &\nnn  1 &\nn -1 &\nnn  0 \\ \midrule
   0 &\nnn  0 &\nn  0 &\nnn -1 &\nn  1 &\nnn  0 &\nn  0 &\nnn  1 &\nn -1 &\nnn  0 \\
   0 &\nnn  0 &\nn  1 &\nnn -1 &\nn  0 &\nnn  1 &\nn -1 &\nnn  1 &\nn  0 &\nnn -1 \\ \midrule
   1 &\nnn  1 &\nn -1 &\nnn -1 &\nn  0 &\nnn  0 &\nn  0 &\nnn  0 &\nn  0 &\nnn  0 \\
   1 &\nnn  1 &\nn -1 &\nnn -1 &\nn  0 &\nnn  0 &\nn  0 &\nnn  0 &\nn  0 &\nnn  0 \\ \midrule
   0 &\nnn  0 &\nn  0 &\nnn  0 &\nn  0 &\nnn  0 &\nn -2 &\nnn  1 &\nn  2 &\nnn -1 \\
   0 &\nnn  0 &\nn  0 &\nnn  0 &\nn  0 &\nnn  0 &\nn -2 &\nnn  1 &\nn  2 &\nnn -1 \\ \midrule \\[-15pt] \midrule
   1 &\nnn  1 &\nn  0 &\nnn -1 &\nn -1 &\nnn  1 &\nn -1 &\nnn  0 &\nn  1 &\nnn -1 \\
  -1 &\nnn  2 &\nn  1 &\nnn -1 &\nn  0 &\nnn  1 &\nn  0 &\nnn -1 &\nn  0 &\nnn -1
   \end{array}
  \right]
  \;
  \left[
  \begin{array}{rr|rr|rr|rr|rr}
  1 &\!   0 &\!   0 &\!   0 &\!   0 &\!   0 &\!   0 &\!   0 &\nn -1 &\nn  0 \\
  0 &\!   1 &\!   0 &\!   0 &\!   0 &\!   0 &\!   0 &\!   0 &\nn  0 &\nn -1 \\
  0 &\!   0 &\!   1 &\!   0 &\!   0 &\!   0 &\!   0 &\!   0 &\nn -1 &\nn  0 \\
  0 &\!   0 &\!   0 &\!   1 &\!   0 &\!   0 &\!   0 &\!   0 &\nn  0 &\nn -1 \\
  0 &\!   0 &\!   0 &\!   0 &\!   1 &\!   0 &\!   0 &\!   0 &\nn -1 &\nn  0 \\
  0 &\!   0 &\!   0 &\!   0 &\!   0 &\!   1 &\!   0 &\!   0 &\nn  0 &\nn -1 \\
  0 &\!   0 &\!   0 &\!   0 &\!   0 &\!   0 &\!   1 &\!   0 &\nn -1 &\nn  0 \\
  0 &\!   0 &\!   0 &\!   0 &\!   0 &\!   0 &\!   0 &\!   1 &\nn  0 &\nn -1
  \end{array}
  \right]
  \]
  \end{minipage}
  \end{center}

  \smallskip
  \smallskip
  
\noindent
Further calculations show that for all $\lambda \vdash 4$ the ranks of $M_\lambda$ and $N_\lambda$ are equal:
  \[
  \begin{array}{l|rrrrr}
  \lambda & 4 & 31 & 22 & 211 & 1111 \\
  d_\lambda & 1 & 3 & 2 & 3 & 1 \\
  \text{rank} & 4 & 12 & 8 & 10 & 2
  \end{array}
  \]
We conclude that $f(x,y,z,t)$ belongs to the $S_4$-module generated by the consequences in degree 4 
of the linearized forms of the alternative laws.
\end{example}
  

\subsection{Bondari's algorithm for finite-dimensional algebras}

Bondari \cite{Bondari1993,Bondari1997} introduced an algorithm using the representation theory of $S_n$ 
which computes an independent generating set for the multilinear identities (and central identities) of 
the full matrix algebra $M_k(\mathbb{F})$ with $\mathrm{char}\,\mathbb{F} = 0$ or  $\mathrm{char}\,\mathbb{F} = p > n$ 
where $n$ is the degree of the identities under consideration.
He constructed all the multilinear identities of degrees $\leq 8$ for $M_3(\mathbb{F})$, confirming existing results 
in the literature and discovering a new central identity in degree 8.

Bondari's algorithm can be used to find multilinear polynomial identities up to a certain degree
(depending on computational limitations) for any algebra $A$ over $\mathbb{F}$ of dimension $d < \infty$.
This algorithm involves evaluating matrix units in $\mathbb{F}S_n$ using the structure constants of $A$ with respect to
a chosen basis.

\begin{definition}
Fix $\lambda \vdash n$ and $f = f_1 + \dots + f_t \in (\mathbb{F} S_n)^t$.
The rank of the matrix $\phi_\lambda (f)$ is called the \textbf{rank of $f$ for $\lambda$}.
If this rank is 1, then we say that $f$ is \textbf{irreducible for $\lambda$}.
(That is, the isotypic component of type $\lambda$ in the submodule generated by $f$ is irreducible.)
\end{definition}

Consider $f \in (\mathbb{F} S_n)^t$ and let $r$ be the rank of the matrix $\mathrm{RCF}(\phi_\lambda( f ))$.
Each of the $r$ nonzero rows $g_1, \dots, g_r$ generates an irreducible submodule of type $\lambda$, 
and the isotypic component of type $\lambda$ is the direct sum of these $r$ isomorphic submodules; 
in other words, $r$ is the multiplicity of $\lambda$ in the submodule generated by $f$.
Extending Lemma \ref{firstrow} to the case of $t > 1$ association types, we see that each $g_i$ can
be regarded independently as an irreducible identity for $\lambda$ in the first row of the matrix.

\begin{lemma}
Every polynomial identity $f \in (\mathbb{F} S_n)^t$ is equivalent to a finite set of identities, 
each of which is irreducible for some $\lambda \vdash n$.
\end{lemma}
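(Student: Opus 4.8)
The plan is to decompose the $S_n$-submodule generated by $f$ into irreducible constituents and to exhibit, for each constituent, an explicit generator of the required form. Write $M$ for this submodule: the $\mathbb{F}$-span of all permutations of $f$, equivalently $M = \mathbb{F} S_n \cdot f \subseteq (\mathbb{F} S_n)^t$. Since $\mathbb{F} S_n$ is semisimple (Remark \ref{charpremark}), the module $(\mathbb{F} S_n)^t$, and hence $M$, is semisimple, so $M$ is the internal direct sum $\bigoplus_{\lambda \vdash n} M_\lambda$ of its isotypic components. Under the Wedderburn isomorphism $\phi$ of \eqref{iso} applied in each of the $t$ coordinates we have $\phi_\mu(M_\lambda) = 0$ for $\mu \ne \lambda$, while $\phi_\lambda$ restricts to an injection $M_\lambda \hookrightarrow M_{d_\lambda}(\mathbb{F})^t$ with image $\phi_\lambda(M)$; thus $\phi$ identifies $M$ with $\bigoplus_\lambda \phi_\lambda(M)$. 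By the discussion preceding the lemma, the multiplicity $r_\lambda$ of $\lambda$ in $M$ is the rank of the $d_\lambda \times t d_\lambda$ matrix $\phi_\lambda(f)$; and since $\phi_\lambda$ is surjective onto $M_{d_\lambda}(\mathbb{F})$, the set $\phi_\lambda(M) = \{ X \phi_\lambda(f) : X \in M_{d_\lambda}(\mathbb{F}) \}$ is exactly the set of matrices all of whose rows lie in the row space of $\phi_\lambda(f)$.

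For each $\lambda$ with $r_\lambda > 0$ I would fix a basis $g^\lambda_1, \dots, g^\lambda_{r_\lambda}$ of the row space of $\phi_\lambda(f)$ (concretely, the nonzero rows of $\mathrm{RCF}(\phi_\lambda(f))$), and write $g^\lambda_s = c^\lambda_s\, \phi_\lambda(f)$ with $c^\lambda_s \in \mathbb{F}^{d_\lambda}$. Let $X^\lambda_s \in M_{d_\lambda}(\mathbb{F})$ have first row $c^\lambda_s$ and all other rows zero, let $u^\lambda_s \in \mathbb{F} S_n$ be the unique element with $\phi_\lambda(u^\lambda_s) = X^\lambda_s$ and $\phi_\mu(u^\lambda_s) = 0$ for $\mu \ne \lambda$ (this uses that $\phi$ is an isomorphism), and set $h^\lambda_s = u^\lambda_s \cdot f \in M$. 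A short computation, of the same type as the proofs of Lemma \ref{firstrow} and of Clifton's theorem and using that $\phi_\lambda$ is an algebra homomorphism in each coordinate, shows that $\phi_\lambda(h^\lambda_s)$ is the matrix with first row $g^\lambda_s$ and remaining rows zero, while $\phi_\mu(h^\lambda_s) = 0$ for $\mu \ne \lambda$; in particular $\mathrm{rank}\, \phi_\lambda(h^\lambda_s) = 1$, so $h^\lambda_s$ is irreducible for $\lambda$ (this makes precise the informal claim in the discussion above that each nonzero row of $\mathrm{RCF}(\phi_\lambda(f))$ is an irreducible identity for $\lambda$ in first-row form). The set $\mathcal{G} = \{ h^\lambda_s : \lambda \vdash n,\ 1 \le s \le r_\lambda \}$ is finite, since $n$ has finitely many partitions and $r_\lambda \le d_\lambda$.

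It then remains to show that $\mathcal{G}$ is equivalent to $f$, that is, that $N := \mathbb{F} S_n \cdot \mathcal{G}$ equals $M$ (so that $\mathcal{G}$ and $f$ generate the same $T$-ideal). Since every $h^\lambda_s$ lies in $M$ we have $N \subseteq M$. Conversely, for each $\lambda$ with $r_\lambda > 0$ the subspace $\phi_\lambda(N)$ contains the matrices $\phi_\lambda(h^\lambda_s)$ and is a left $M_{d_\lambda}(\mathbb{F})$-submodule of $M_{d_\lambda}(\mathbb{F})^t$, hence it contains every matrix whose rows lie in $\mathrm{span}\{ g^\lambda_1, \dots, g^\lambda_{r_\lambda} \}$, which is the row space of $\phi_\lambda(f)$; therefore $\phi_\lambda(N) \supseteq \phi_\lambda(M)$. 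Combined with $N \subseteq M$ this yields $\phi_\lambda(N) = \phi_\lambda(M)$ for all $\lambda$ (both vanish when $r_\lambda = 0$), and since $\phi$ identifies $N$ with $\bigoplus_\lambda \phi_\lambda(N)$ and $M$ with $\bigoplus_\lambda \phi_\lambda(M)$ and is injective, we get $N = M$. The step that warrants the most care is this final one: passing from the coordinatewise equalities $\phi_\lambda(N) = \phi_\lambda(M)$ to $N = M$ rests on the fact, a consequence of semisimplicity, that every submodule of $(\mathbb{F} S_n)^t$ is the direct sum of its isotypic components, so its $\phi$-image is determined coordinate by coordinate. Everything else is routine bookkeeping with the Wedderburn decomposition developed in Part 1.
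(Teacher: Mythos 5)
Your proof is correct, and its essence---that the $S_n$-submodule $M = \mathbb{F}S_n \cdot f$ decomposes into isotypic components by semisimplicity, from which one extracts irreducible generators---is exactly the content of the paper's one-sentence proof, which simply appeals to complete reducibility of finite-dimensional $S_n$-modules. Your version is a careful constructive unfolding of that appeal: you make explicit, via the Wedderburn isomorphism, the multiplicity count $r_\lambda = \mathrm{rank}\,\phi_\lambda(f)$, the first-row-form generators $h^\lambda_s$, and the verification that they generate the same submodule; this is all consistent with the surrounding discussion in the paper, which the paper itself leaves at the informal level.
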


\begin{proof}
This is another way of saying that every finite dimensional $S_n$-module over $\mathbb{F}$ is the
direct sum of irreducible modules.
\end{proof}

Recall the images of the matrix units, $U^\lambda_{1j} = \psi(E^\lambda_{1j}) \in \mathbb{F} S_n$.
The general element $h \in \mathbb{F} S_n$ which is irreducible for $\lambda \vdash n$ has the form
  \[
  h = \sum_{k=1}^t \sum_{j=1}^{d_\lambda} x^k_{1j} [ U^\lambda_{1j} ]_k \qquad (x^k_{1j} \in \mathbb{F}).
  \]
Suppose that $A$ has basis $b_1, \dots, b_d$.
We describe one iteration of Bondari's algorithm.
We choose arbitrary elements $a_1, \dots, a_n \in A$ and evaluate the $[ U^\lambda_{1j} ]_k$:
  \[
  [ U^\lambda_{1j} ]_k( a_1, \dots, a_n ) = \sum_{i=1}^d c^i_{kj} b_i.
  \]
(This step can be very time-consuming, since the number of terms in the elements 
$U^\lambda_{1j} \in \mathbb{F}S_n$ is roughly $n!$.)
Combining the last two equations we obtain
  \[
  h( a_1, \dots, a_n ) = \sum_{i=1}^d \left[ \sum_{k=1}^t \sum_{j=1}^{d_\lambda} c^i_{kj} x^k_{1j} \right] b_i.
  \]
If $h$ is an identity for $A$ then the coefficient of each $b_i$ must be 0 for all $a_1, \dots, a_n \in A$:
  \[
  \sum_{k=1}^t \sum_{j=1}^{d_\lambda} c^i_{kj} x^k_{1j} = 0 \qquad ( 1 \le i \le d ).
  \]
This is a homogeneous linear system of $d$ equations in the $t d_\lambda$ coefficients $x^k_{1j}$ of the identity.
We compute the RCF of the coefficient matrix, and find its rank.

After $s$ iterations, we have a linear system of $sd$ equations.
We repeat this process until the rank stabilizes.
We then solve the system by computing the nullspace of the RCF.
The nonzero vectors in the nullspace are (probably) coefficient vectors of identities satisfied by $A$.
We need to check these identities by further computations.


\subsection{Rational and modular arithmetic}

In general, we prefer to do all linear algebra computations over the field $\mathbb{Q}$ of rational numbers.
However, even if a matrix is very sparse and its entries are very small,
computing its RCF can produce exponential increases in the entries.
Even if enough computer memory is available to store the intermediate results, the calculations can take far too long.
It is therefore often convenient to use modular arithmetic, so that each entry uses a fixed small
amount of memory.
This leads to the problem of rational reconstruction: recovering correct results over $\mathbb{Q}$ or $\mathbb{Z}$
from known results over $\mathbb{F}_p$.

Rational reconstruction is not well-defined: we try to compute an inverse for a
partially-defined infinity-to-one map.
It is only effective when we have a good theoretical understanding of the expected results.
For our computations, Remark \ref{charpremark}
explains why we may assume that the correct rational coefficients 
have $n!$ as their common denominator where $n$ is the degree of the identities under consideration;
see also \cite[Lemma 8]{BP2009a}.
If we use a prime $p > n!$ then we can guess the common denominator
$b$ of the rational coefficients $a/b$ from the distribution of the congruence classes
modulo $p$: the modular coefficients are clustered near the congruence classes representing $a/b$ for $1 \le a \le b-1$.
This allows us to recover the rational coefficients; we then multiply by the LCM of the denominators to get
integer coefficients, and finally divide by the GCD of the coefficients.

Most of our computations require finding a basis of integer vectors for the nullspace of an integer matrix.
In some cases, modular methods give good results, meaning that
the basis vectors have small Euclidean lengths.
In other cases, we obtain much better results using the Hermite normal form (HNF) of an integer matrix
together with the LLL algorithm for lattice basis reduction.
If $M$ is an $s \times t$ matrix over $\mathbb{Z}$ then computing the HNF of the transpose produces two
matrices over $\mathbb{Z}$: a $t \times s$ matrix $H$ and a $t \times t$ matrix $U$ with $\det(U) = \pm 1$
such that $U M^t = H$.
If $\mathrm{rank}(M) = r$ then the bottom $t-r$ rows of $U$ form a lattice basis for the left integer nullspace of $M^t$,
which is the right integer nullspace of $M$.
We then apply the LLL algorithm to obtain shorter basis vectors.
For details, see \cite[\S3]{BP2009b}, \cite{BremnerLLL}.

We consider the fill and reduce algorithm in more detail.
By an error we mean that row reduction produces a row whose leading entry $a/b$ is nonzero in $\mathbb{Q}$ 
but zero in $\mathbb{F}_p$: in lowest terms $\gcd(a,b) = 1$ and $p \mid a$.
We assume that the probability of this is $1/p$. 
We can make this entry 1 using rational arithmetic, but it will be 0 using modular arithmetic. 
If the algebra $A$ has dimension $d$, then each iteration of the algorithm produces another $d$ linear equations
in the coefficients of the polynomial identity.
So we expect to perform $d$ operations of scalar multiplication of a row during the iteration. 
The chance that no error occurs is $[1-1/p]^d$.
The chance that an error occurs before the rank stabilizes, and remains for $s$ iterations after it stabilizes, is
$[ 1 - [ 1 - 1/p ]^d ]^s$.
For example, if we use $p = 101$ for an algebra of dimension $d = 8$ and perform $s = 10$
iterations after the rank has stabilized, then the probability of incorrect results is
$\approx 0.688 \cdot 10^{-11}$.

We conclude this section with an important special case.
Suppose that $f \equiv 0$ is an identity with rational coefficients satisfied by the algebra $A$ which
has integral structure constants with respect to a given basis. 
We multiply $f$ by the LCM of the denominators of its coefficients, 
obtaining a polynomial $f'$ with integral coefficients; 
we then divide $f'$ by the GCD of its coefficients, 
obtaining a polynomial $f''$ whose coefficients are integers with no common factor. 
It is clear that $f'' \equiv 0$ is an identity satisfied by $A$, and that the
reduction of $f''$ modulo $p$ is nonzero for any prime number $p$. 
Thus the existence of identities in characteristic 0 implies the existence of identities in characteristic $p$ for all $p$, 
and so non-existence in characteristic $p$ for a single prime $p$ implies non-existence in characteristic 0.
Therefore we can verify non-existence of identities over $\mathbb{Q}$ by computation over $\mathbb{F}_p$.


\subsection{Polynomial identities of Cayley-Dickson algebras}

The most important alternative algebra is the division algebra $\mathbb{O}$ of real octonions,
which arises from the Cayley-Dickson doubling process 
$\mathbb{R} \subset \mathbb{C} \subset \mathbb{H} \subset \mathbb{O}$; see \cite[\S 2.2]{ZSSS1982}.
Cayley-Dickson algebras (also called generalized octonion algebras)
are 8-dimensional alternative algebras 
$C( \alpha, \beta, \gamma )$ 
depending on parameters $\alpha, \beta, \gamma \in \mathbb{F} \setminus \{0\}$.
Kleinfeld classified simple alternative algebras in terms of Cayley-Dickson algebras.

\begin{theorem}
\label{Kleinfeld}
\emph{Kleinfeld, 1953 \cite{Kleinfeld1953}.}
A simple non-associative alternative algebra is a Cayley-Dickson algebra over its center.
\end{theorem}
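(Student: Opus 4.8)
The plan is to recover the Cayley-Dickson structure of $A$ by the classical route: pass to an algebra over the center, prove that the nucleus coincides with the center, show that $A$ is quadratic, and then invoke the classification of composition algebras.

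First I would carry out the standard preliminary reductions. Since $A$ is simple, its center $Z = Z(A)$ is a field: for nonzero $z \in Z$ the set $zA$ is a two-sided ideal (here one uses that a central element lies in the nucleus, so associators through it vanish), hence equals $A$ because $A^2 \neq 0$, and from this one extracts a multiplicative identity for $A$ and an inverse for $z$ inside $Z$. Thus $A$ becomes an algebra over the field $Z$ with $1 \in A$. Throughout I would use Artin's theorem, that the subalgebra of an alternative algebra generated by any two elements is associative; this makes the associator $(x,y,z) = (xy)z - x(yz)$ an alternating function of its arguments and yields the Moufang identities.

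Second --- and this is the core of the argument --- I would prove that the nucleus $N = N(A)$ coincides with $Z$. Because $A$ is not associative, the associator ideal $D(A)$, generated by all associators $(x,y,z)$, is nonzero, so $D(A) = A$ by simplicity. Then, using the Kleinfeld identities valid in alternative rings --- the linearized alternative laws together with the relations describing how an associator commutes with a fourth element and how associators multiply --- one shows that every element of $N$ must commute with every element of $A$, i.e. $N \subseteq Z$; the reverse inclusion is immediate. This is the step where the ``nearly associative'' identity theory does the real work.

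Third, with $N = Z$ in hand, I would show that $A$ is \emph{quadratic} over $Z$: every $x \in A$ satisfies $x^2 - t(x)\,x + n(x) = 0$ with $t(x), n(x) \in Z$. The norm form $n$ is then a nondegenerate multiplicative quadratic form, so $A$ is a composition algebra over $Z$, and the generalized Hurwitz theorem (the classification of composition algebras via the Cayley-Dickson construction) forces $\dim_Z A \in \{1,2,4,8\}$. Algebras of dimension $1$, $2$, or $4$ over $Z$ are associative, so non-associativity of $A$ leaves only $\dim_Z A = 8$, which is precisely a Cayley-Dickson algebra $C(\alpha,\beta,\gamma)$ over $Z$. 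I expect the principal obstacle to be the second step: establishing $N = Z$ rests on a web of identities special to alternative rings and must be handled carefully for a ring that is not a priori unital, and the quadraticity needed in the third step is likewise delicate, being the point at which the doubling structure of the octonions really enters.
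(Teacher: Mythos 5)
The paper does not prove this theorem; it is stated purely as a citation to Kleinfeld's 1953 paper \cite{Kleinfeld1953} and then used as a black box in the discussion of Cayley--Dickson algebras, so there is no internal proof to compare against. Your outline follows the standard modern route (essentially the exposition in Zhevlakov--Slinko--Shestakov--Shirshov, which the paper also cites): reduce to a unital algebra over the field $Z = Z(A)$, prove $N(A) = Z(A)$ from the Kleinfeld identities plus $D(A) = A$, show $A$ is quadratic over $Z$, and then invoke the classification of composition algebras. That skeleton is correct, but the two steps you flag as ``the real work'' are exactly where nearly all of the content lies, and the sketch glosses over a further genuine gap in the third step: quadraticity alone does not give you a \emph{composition} algebra. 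You assert that ``the norm form $n$ is then a nondegenerate multiplicative quadratic form,'' but neither nondegeneracy nor multiplicativity of $n$ is automatic; proving $n(xy) = n(x)n(y)$ for a quadratic alternative algebra and showing the form is nonsingular requires its own chain of identity arguments (and some care in characteristic $2$). Without that, Hurwitz's theorem does not apply and the dimension restriction to $\{1,2,4,8\}$ is not available. It is also worth noting that Kleinfeld's original argument does not pass through Hurwitz's classification at all; it derives the Cayley--Dickson structure directly by identity manipulations in a simple alternative ring, so your proposal is a plan for the modern proof rather than a reconstruction of Kleinfeld's.
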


If $ \mathbb{F} = \mathbb{R}$ then $C(-1, -1, -1) = \mathbb{O}$.
If $\mathrm{char}\,\mathbb F \ne 2$ then it is possible to choose a basis $1, e_1 , \dots , e_7$ of $C( \alpha, \beta, \gamma )$
so that its multiplication table is Table \ref{multiplicationoctonions}.

\begin{table}[h]
  \[
  \begin{array}{c|cccccccc}
  & 1 & e_1  & e_2 & e_3  & e_4  & e_5 & e_6 & e_7
  \\[2pt]
  \midrule
  1 & 1 & e_1  & e_2 & e_3  &  e_4 & e_5 & e_6 & e_7
  \\[2pt]
  e_1 & e_1 & \alpha  & e_3 & \alpha  e_2 & e_5 & \alpha  e_4 & -e_7 & -\alpha  e_6
  \\[2pt]
  e_2 & e_2 & - e_3 &  \beta  & - \beta  e_1 & e_6 & e_7  & \beta  e_4 & \beta  e_5
  \\[2pt]
  e_3 & e_3 & - \alpha  e_2 & \beta  e_1 & - \alpha  \beta  & e_7 &  \alpha  e_6 & - \beta  e_5 & - \alpha  \beta  e_4
  \\[2pt]
  e_4 & e_4 &  - e_5 & - e_6 & - e_7 &  \gamma  &  - \gamma  e_1 & - \gamma  e_2 & - \gamma  e_3
  \\[2pt]
  e_5 & e_5 & - \alpha  e_4 & - e_7 & -\alpha  e_6 &  \gamma  e_1 & - \alpha  \gamma  & \gamma  e_3  &  \alpha  \gamma  e_2
  \\[2pt]
  e_6 & e_6 & e_7 & - \beta  e_4 & \beta  e_5 & \gamma  e_2 & -\gamma  e_3 & -\beta  \gamma & - \beta  \gamma  e_1
  \\[2pt]
  e_7 & e_7 & \alpha  e_6 & - \beta  e_5 & \alpha  \beta  e_4 & \gamma  e_3 &  - \alpha  \gamma  e_2 &  \beta  \gamma  e_1 & \alpha  \beta  \gamma
  \\
  \midrule
  \end{array}
  \]
  \caption{Multiplication table of the generalized octonions}
  \label{multiplicationoctonions}
\end{table}

\begin{problem}
Find a basis for the $T$-ideal of polynomial identities of a Cayley-Dickson algebra $C$.
\end{problem}

Isaev \cite{Isaev1984} found a finite basis of $T(C)$ when $\mathbb{F}$ is finite. 
Iltyakov \cite{Iltyakov1985} proved that $T(C)$ is finitely generated when $\mathrm{char}\,\mathbb{F} = 0$
but did not give a set of generators. 
Racine \cite{Racine1988} found the identities of degrees $\leq 5$ for $C$ when $\mathrm{char}\,\mathbb{F} \neq 2,3,5$.

Cayley-Dickson algebras are quadratic algebras, in the sense that they are unital algebras $C$ over $\mathbb F$ 
such that every $x \in C$ satisfies $x^2 - t(x) x + n(x) 1 = 0$, where the trace $t\colon C \to \mathbb{F}$ is a linear map 
and the norm $n\colon C \to \mathbb{F}$ is a quadratic form. 
If $x = a \cdot 1 + \sum_{i=1}^7 a_i e_i$ and ${\overline x} = a \cdot 1 - \sum_{i=1}^7 a_i e_i$ 
are an element of $C$ and its conjugate, then the trace and the norm of $x$ are as follows:
  \begin{align*}
  t(x) &= x + {\overline x} = 2a, 
  \\
  n(x) &= x {\overline x} = a^2 - \alpha  a^2_1 - \beta a^2_2 + \alpha \beta a^2_3 - \gamma a^2_4 
  + \alpha \gamma a^2_5 + \beta \gamma a^2_6 - \alpha \beta \gamma a^2_7.
  \end{align*}

\begin{theorem} \label{racinetheorem}
\emph{Racine, 1985 \cite{Racine1985}.}
Every quadratic algebra satisfies the identity
  \[
  V(t^2) - V(t) \circ t \equiv 0,
  \]
where $x\circ y = xy + yx$, 
$V_x (y) = x \circ y$, and
$V = \sum_{\sigma\in S_3} \epsilon(\sigma) V_{x^\sigma} V_{y^\sigma} V_{z^\sigma}$.
\end{theorem}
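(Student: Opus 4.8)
The plan is to read the identity through the structure theory of quadratic algebras, together with the observation that each multiplication operator $V_x$ is a scalar operator plus a skew-symmetric operator for the norm form. I assume $\mathrm{char}\,\mathbb{F}\neq 2$, which holds in our setting. Write $C=\mathbb{F}1\oplus C_0$ with $C_0=\{x:\overline{x}=-x\}=\ker t$, let $b(u,v)=n(u+v)-n(u)-n(v)$ be the polarization of the norm, and record the elementary facts (all immediate from $x^2-t(x)x+n(x)1=0$ and $\overline{x}=t(x)1-x$): $\mathbb{F}1\perp C_0$, $b(1,1)=2$, and for $u_0,v_0\in C_0$ one has $u_0\circ v_0=-b(u_0,v_0)1$ and $u_0^{\,2}=-n(u_0)1$. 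The first step is to verify that $V_x(y)=x\circ y$ splits as $V_x=t(x)\,\mathrm{id}_C+W_{x_0}$, where $x_0=x-\tfrac12 t(x)1\in C_0$ and $W_{x_0}$ is the operator determined by $W_{x_0}(1)=2x_0$ and $W_{x_0}(v_0)=-b(x_0,v_0)1$ for $v_0\in C_0$. Two properties of $W_{x_0}$ will carry the whole argument: it interchanges $\mathbb{F}1$ and $C_0$, and it is skew-symmetric for $b$, i.e. $W_{x_0}\in\mathfrak{so}(C,b)$ (a three-case check on the pairs $(1,1)$, $(1,v_0)$, $(v_0,w_0)$).

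The second step is to evaluate the alternating product $V=\sum_{\sigma\in S_3}\epsilon(\sigma)V_{a_{\sigma 1}}V_{a_{\sigma 2}}V_{a_{\sigma 3}}$, where $(a_1,a_2,a_3)=(x,y,z)$. Substituting $V_{a_m}=t(a_m)\,\mathrm{id}+W_{a_{m,0}}$ and expanding, the contributions containing two or three scalar factors $t(a_m)$ are symmetric under relabelling the indices $1,2,3$ and are therefore killed by $\sum_\sigma\epsilon(\sigma)$; the contributions with exactly one scalar factor collapse to $t(a_1)[W_{a_{2,0}},W_{a_{3,0}}]+t(a_2)[W_{a_{3,0}},W_{a_{1,0}}]+t(a_3)[W_{a_{1,0}},W_{a_{2,0}}]$; and the purely operator contribution is the standard polynomial $\sum_\sigma\epsilon(\sigma)W_{a_{\sigma 1,0}}W_{a_{\sigma 2,0}}W_{a_{\sigma 3,0}}$. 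I then show that this last term vanishes: applied to $1$ it becomes an alternating sum of vectors with coefficients $b(a_{j,0},a_{k,0})$, and applied to any $z_0\in C_0$ an alternating sum of scalars with coefficients $b(a_{j,0},a_{k,0})\,b(a_{\ell,0},z_0)$, and in both cases the two permutations sharing the distinguished index cancel because $b$ is symmetric. Hence $V=\sum_{\mathrm{cyc}}t(a_i)[W_{a_{j,0}},W_{a_{k,0}}]$.

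From this I conclude. Each commutator lies in $\mathfrak{so}(C,b)$ and annihilates $1$ (its value there is $-2b(a_{j,0},a_{k,0})1+2b(a_{k,0},a_{j,0})1=0$), so $V\in\mathfrak{so}(C,b)$ and $V(1)=0$; together with $\mathbb{F}1\perp C_0$ and $b(1,1)\neq 0$ this forces $V(C_0)\subseteq C_0$. Now write $t=\tfrac12 t(t)1+t_0$ with $t_0\in C_0$. The quadratic relation $t^2=t(t)\,t-n(t)1$ and $V(1)=0$ give $V(t^2)=t(t)\,V(t)$, while $V(t)=V(t_0)\in C_0$. On the other side, $V(t)\circ t=t(t)\,V(t)+V(t)\circ t_0$ and, since $V(t),t_0\in C_0$, we have $V(t)\circ t_0=-b(V(t),t_0)1$; therefore $V(t^2)-V(t)\circ t=b(V(t_0),t_0)1$, which vanishes because $V$ is skew-symmetric for $b$. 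The only real obstacle I anticipate is organizational: setting up the $W_{x_0}$ calculus cleanly and tracking carefully which terms of the $S_3$-alternation survive and with which signs, so that the reduction of $V$ to a sum of commutators in $\mathfrak{so}(C,b)$ is transparent; once that is done, the vanishing of the standard polynomial in the $W$'s and the final cancellation are both short.
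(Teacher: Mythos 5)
The paper states this result as a citation to Racine's 1985 paper and gives no proof of its own, so there is nothing internal to compare your argument against. That said, your argument is correct as written. The decomposition $V_x = t(x)\,\mathrm{id} + W_{x_0}$ with $W_{x_0}$ interchanging $\mathbb{F}1$ and $C_0$ and lying in $\mathfrak{so}(C,b)$, the reduction of the alternating sum $V$ to $\sum_{\mathrm{cyc}} t(a_i)[W_{a_{j,0}},W_{a_{k,0}}]$ after the fully scalar, two-scalar, and all-$W$ contributions drop out, and the final evaluation $V(t^2)-V(t)\circ t = b(V(t_0),t_0)\,1 = 0$ all check out; the vanishing of the standard polynomial in the $W$'s on both $1$ and $C_0$ follows cleanly from the symmetry of $b$, and the last step uses only $[W_j,W_k]\in\mathfrak{so}(C,b)$ and $[W_j,W_k](1)=0$. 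One point worth making explicit: you tacitly use the normalization $t(1)=2$, $n(1)=1$ (so that $C=\mathbb{F}1\oplus\ker t$, $\overline{1}=1$, $b(1,1)=2$, and the formula $x=\tfrac12 t(x)1+x_0$ is valid). This is part of the standard definition of a quadratic algebra in Racine's sense, but since the relation $x^2-t(x)x+n(x)1=0$ alone only forces $1-t(1)+n(1)=0$, it should be stated. Also note that $V(C_0)\subseteq C_0$ follows immediately from the fact that each $W_i$ interchanges $\mathbb{F}1$ and $C_0$ (so any commutator $[W_j,W_k]$ preserves both summands), which is slightly more direct than the orthogonality argument you give, though both work.
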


It follows that every Cayley-Dickson algebra satisfies this identity.
The identities of degree $\leq 6$ satisfied by Cayley-Dickson algebras were found by Hentzel and Peresi 
using Bondari's algorithm. 

\begin{theorem} \label{HP97}
\emph{Hentzel and Peresi, 1997 \cite{HP1997}.}
The identities of degree $\le 6$ of Cayley-Dickson algebras are as follows, where
either $\mathrm{char}\,\mathbb{F} = 0$ or $\mathrm{char}\,\mathbb{F} = p > n$, and $n$
is the degree of the identity:
  \begin{alignat*}{2}
  n &\le 2 &\qquad &\text{no identities}
  \\
  n &= 3 &\qquad &(x,x,y) \equiv 0, \quad (x,y,y) \equiv 0 \quad \text{(alternative laws)}
  \\
  n &= 4 &\qquad &\text{no identities}
  \\
  n &= 5 &\qquad &V(t^2) - V(t) \circ t \equiv 0, \quad [[x,y] \circ [z,t] , w] \equiv 0
  \\
  n &= 6 &\qquad 
  &\Big[ 
  \sum_{\sigma \in S_5} \sgn(\sigma) \big(  24 x(y(z(tw))) + 8 x((y,z,t)w) - 11 (x,y,(z,t,w)) \big), \, 
  u \,
  \Big] 
  \equiv 0,
  \\
  &&&\text{where $\sigma$ permutes $x,y,z,t,w$ and $\sgn$ is the sign.}
  \end{alignat*}
We give only the identities which are not consequences of those of lower degrees.
\end{theorem}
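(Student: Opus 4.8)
The plan is to reduce the statement to a finite computation carried out with the representation theory of $S_n$ developed in Part 1, via Bondari's algorithm. First I would observe that it suffices to treat a single Cayley--Dickson algebra. A multilinear identity holds for an algebra $A$ over $\mathbb{F}$ if and only if it holds after any extension of scalars (evaluation on a fixed basis suffices, and extension of scalars does not change the structure constants), and isomorphic algebras satisfy the same identities. Over the algebraic closure every Cayley--Dickson algebra $C(\alpha,\beta,\gamma)$ becomes the split octonion algebra, so all of them satisfy exactly the same multilinear identities; moreover I may take $\alpha=\beta=\gamma=1$ in Table \ref{multiplicationoctonions}, giving structure constants in $\{0,\pm1\}$, which together with Remark \ref{charpremark} justifies doing all the linear algebra over $\mathbb{F}_p$ for a suitably large prime $p$ and lifting the results (the denominator $n!$ being controlled as explained earlier).

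Next, for each $n$ with $2 \le n \le 6$ and each partition $\lambda \vdash n$ I would run the representation-theoretic fill-and-reduce procedure: form the polynomials $[U^\lambda_{1j}]_k$, the image under $\psi$ of the matrix unit $E^\lambda_{1j}$ placed into each of the $t(n) = C_{n-1}$ association types, evaluate them on pseudorandom elements of the split octonion algebra, stack the resulting linear constraints, and reduce until the rank of the accumulated matrix stabilizes. The nullspace then yields the matrix $\texttt{allmat}(\lambda)$ whose row space is the full space of multilinear identities for $C$ in partition $\lambda$, of dimension $a_\lambda$. In parallel I would build $\texttt{oldmat}(\lambda)$: the representation matrix of all consequences in degree $n$ of the identities already known in degrees $<n$ --- for $n=4$ the consequences of the alternative laws, for $n=5,6$ also the consequences of whatever new identities are found in the previous steps --- and record its rank $o_\lambda$.

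The comparison $o_\lambda \le a_\lambda$, with equality precisely when there are no new identities of type $\lambda$, then drives the conclusion (this is the representation-theoretic form of Proposition \ref{Solutionofbasicproblem} and Lemma \ref{newgenerators}): the partitions with $o_\lambda < a_\lambda$ are exactly those carrying new identities, and the rows of $\texttt{allmat}(\lambda)$ whose leading $1$'s lie outside $\texttt{jleading}(\texttt{oldmat}(\lambda))$ are explicit generators. One expects: nothing in degrees $\le 2$ (a Cayley--Dickson algebra is neither commutative nor anticommutative, so there is no room for linear or bilinear identities) and nothing new in degree $4$; the two alternative laws in degree $3$; two new generators in degree $5$; a single new generator in degree $6$. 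It then remains to match the machine-produced generators with the clean forms in the statement and to certify independently that $C$ satisfies them: alternativity of Cayley--Dickson algebras is classical; $V(t^2)-V(t)\circ t\equiv 0$ is Racine's identity for quadratic algebras (Theorem \ref{racinetheorem}) and $C$ is quadratic with the stated trace and norm; and $[[x,y]\circ[z,t],w]\equiv 0$ together with the degree-$6$ identity can be verified directly (or recognized as a consequence of the fact that commutators and associators of Cayley--Dickson elements lie in controlled subspaces relative to the trace form).

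The main obstacle is the size of the degree-$6$ step. There are $C_5 = 42$ association types and $6! = 720$ permutations, so for the larger partitions of $6$ (where $d_\lambda$ reaches $16$) the matrices have on the order of $t\,d_\lambda \le 672$ columns, and each evaluation of a polynomial $U^\lambda_{1j} \in \mathbb{F} S_6$ --- roughly $720$ terms --- must be repeated over many iterations. Keeping this feasible requires exactly the modular-arithmetic strategy discussed earlier (fixed-width entries, a prime $p > n!$, rational reconstruction from the clustering of residues), together with enough fill-and-reduce iterations after the rank stabilizes that the probability of a spurious result is negligible.
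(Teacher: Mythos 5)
The paper does not give a proof of Theorem~\ref{HP97}; it is stated as a citation to Hentzel and Peresi~\cite{HP1997}, and what the paper offers in the following subsection ``Multilinear identities for the octonions'' is an independent computational \emph{verification} of the claimed list, using exactly the $S_n$-representation machinery and Bondari's fill-and-reduce algorithm developed in Parts~1 and~2. Your proposal is the same strategy: run fill-and-reduce for each $\lambda\vdash n$ to obtain $\texttt{allmat}(\lambda)$, build $\texttt{oldmat}(\lambda)$ from the consequences of lower-degree identities, compare ranks, and read off new generators from the extra leading ones. The tables of multiplicities in the paper (Tables~\ref{deg5table} and~\ref{deg6table}) realize precisely the comparison you describe. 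Your reduction to a single algebra via scalar extension to $\overline{\mathbb{F}}$ is also implicit in the paper's practice, although the paper fixes $\alpha=\beta=\gamma=-1$ (division octonions) rather than $\alpha=\beta=\gamma=1$; both are legitimate since multilinear identities are preserved and reflected by field extension and all $C(\alpha,\beta,\gamma)$ become isomorphic over $\overline{\mathbb{F}}$.

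The one point worth sharpening is the logical status of the computation. Fill-and-reduce gives only a rigorous \emph{upper} bound on $a_\lambda$ (the rank of the evaluation matrix is a certified lower bound, so the nullity is a certified upper bound on the dimension of identities of type~$\lambda$). To turn this into an equality, one must separately exhibit enough actual identities of type~$\lambda$ --- namely the consequences of lower degrees plus the certified new generators --- to meet that bound. You gesture at this when you say the generators should be ``certified independently'' (alternativity is classical, (R2) is Racine's theorem for quadratic algebras, etc.), but it would help to state explicitly that the probabilistic parts (pseudorandom evaluation, modular arithmetic) only threaten the upper bound, and once the bound is met by exhibited identities the argument is rigorous for the prime used. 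A secondary remark: the theorem is claimed for all $\mathrm{char}\,\mathbb{F}=p>n$, not just one large prime; your modular-plus-rational-reconstruction plan addresses characteristic~$0$ directly, and one then invokes the observation in the paper's discussion of rational versus modular arithmetic (integer structure constants and integer-coefficient identities descend to all such~$p$, and $\mathbb{F}S_n$ remains semisimple when $p>n$ by Remark~\ref{charpremark}) to cover the remaining characteristics. With those two clarifications your proposal aligns with the paper's verification of the theorem.
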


In characteristic 0, Shestakov and Zhukavets \cite{SZ2009} found a basis of three identities 
(one of degree 5 and two of degree 6) for the skew-symmetric identities of $\mathbb{O}$. 
In characteristic $\ne 2, 3, 5$, 
Shestakov \cite{Shestakov2011} found a basis of identities for split Cayley-Dickson algebras $C$
modulo the associator ideal of a free alternative algebra; 
that is, a basis for a homomorphic image $T'(C)$ in the free associative algebra
of the $T$-ideal $T(C)$ of identities of $C$.  
Henry \cite{Henry} found a basis for the ${\mathbb Z}_2^2$-graded and ${\mathbb Z}_2^3$-graded identities for 
Cayley-Dickson algebras (the latter case requires characteristic $\ne 2$).
Bremner and Hentzel \cite{BH2002} studied identities for alternative algebras which are built out of associators;
in degree 7, they found two identities satisfied by the associator in every alternative algebra, 
and five identities satisfied by the associator in $\mathbb{O}$.


\subsection{Multilinear identities for the octonions}

We apply the computational techniques described in previous sections to the multilinear polynomial identities 
satisfied by the algebra $\mathbb{O}$ of octonions.
We recover all the existing results in the literature on identities in degree $\le 6$,
and then show that there are no new identities in degree 7.
As basis for $\mathbb{O}$ over the field $\mathbb{F}$ we take the symbols $1, e_1, \dots, e_7$.
The structure constants depend on parameters $\alpha,\beta,\gamma \in \mathbb{F}$; 
see Table \ref{multiplicationoctonions}.
If $\mathbb{F} = \mathbb{R}$ and $\alpha=\beta=\gamma=-1$ then we obtain the alternative division algebra of real octonions,
which is the case we consider in what follows.
For the alternative laws, their linearizations, and their consequences in degree 4, see Example \ref{alternative4}.

\subsubsection*{Degree 3}

Every multilinear identity in degree 3 satisfied by $\mathbb{O}$ follows from the linearizations of the alternative laws;
see \cite[\S 9, Example 1]{BMS2007}.

\subsubsection*{Degree 4}

Every multilinear identity of degree 4 satisfied by $\mathbb{O}$ follows from the consequences of the alternative laws;
see \cite{Racine1988}.
We will verify this result using our computational methods.
The partitions $\lambda \vdash 4$ are 4, 31, 22, 211, 1111 with corresponding dimensions $d_\lambda =$ 1, 3, 2, 3, 1.
The $t = 5$ association types are 
  \[
  ((\ast\ast)\ast)\ast, \qquad 
  (\ast(\ast\ast))\ast, \qquad 
  (\ast\ast)(\ast\ast), \qquad 
  \ast((\ast\ast)\ast), \qquad 
  \ast(\ast(\ast\ast)).
  \]
We give details for $\lambda = 22$; the other cases are similar.
The standard tableaux are:
  \[
  \young(12,34) \qquad\qquad \young(13,24)
  \]
The elements $U^\lambda_{11}, U^\lambda_{12} \in \mathbb{Q} S_4$ corresponding to the first row matrix units are
  \begin{align*}
  U^\lambda_{11}
  =
  \psi( E^\lambda_{11} )
  &=
  1234
  - 1432
  - 3214
  + 3412
  + 1243
  - 1342
  - 4213
  + 4312
  \\
  &\quad
  + 2134
  - 2431
  - 3124
  + 3421
  + 2143
  - 2341
  - 4123
  + 4321,
  \\
  U^\lambda_{12}
  =
  \psi( E^\lambda_{12} )
  &=
  1324
  - 1342
  - 3124
  + 3142
  + 1423
  - 1432
  - 4123
  + 4132
  \\
  &\quad
  + 2314
  - 2341
  - 3214
  + 3241
  + 2413
  - 2431
  - 4213
  + 4231.
  \end{align*}
We create an $18 \times 10$ matrix consisting of $2 \times 2$ blocks,
with a $10 \times 10$ upper block and an $8 \times 10$ lower block.
The columns correspond to the following elements of the direct sum of $t = 5$ copies of $\mathbb{F} S_4$, 
where the subscripts give the association types:
  \[
  [ U^\lambda_{11} ]_1 \;\;
  [ U^\lambda_{12} ]_1 \quad
  [ U^\lambda_{11} ]_2 \;\;
  [ U^\lambda_{12} ]_2 \quad
  [ U^\lambda_{11} ]_3 \;\;
  [ U^\lambda_{12} ]_3 \quad
  [ U^\lambda_{11} ]_4 \;\;
  [ U^\lambda_{12} ]_4 \quad
  [ U^\lambda_{11} ]_5 \;\;
  [ U^\lambda_{12} ]_5
  \]
Any identity for $\mathbb{O}$ of type $\lambda$ can be expressed as a linear combination of these elements.
The fill-and-reduce algorithm converges after one iteration to this matrix:
  \[
  \left[
  \begin{array}{rr|rr|rr|rr|rr}
  1 & 0 & 1 & 0 & 1 & 0 & 1 & 0 & 1 & 0 \\
  0 & 1 & 0 & 1 & 0 & 1 & 0 & 1 & 0 & 1
  \end{array}
  \right]
  \]
We find a basis for the nullspace and calculate its RCF, obtaining the matrix
whose rows represent identities of type $\lambda$ satisfied by $\mathbb{O}$:
  \[
  \texttt{allmat}(\lambda)
  =
  \left[
  \begin{array}{rr|rr|rr|rr|rr}
  1 & 0 & 0 & 0 & 0 & 0 & 0 & 0 & -1 & 0 \\
  0 & 1 & 0 & 0 & 0 & 0 & 0 & 0 & 0 & -1 \\
  0 & 0 & 1 & 0 & 0 & 0 & 0 & 0 & -1 & 0 \\
  0 & 0 & 0 & 1 & 0 & 0 & 0 & 0 & 0 & -1 \\
  0 & 0 & 0 & 0 & 1 & 0 & 0 & 0 & -1 & 0 \\
  0 & 0 & 0 & 0 & 0 & 1 & 0 & 0 & 0 & -1 \\
  0 & 0 & 0 & 0 & 0 & 0 & 1 & 0 & -1 & 0 \\
  0 & 0 & 0 & 0 & 0 & 0 & 0 & 1 & 0 & -1
  \end{array}
  \right]
  \]
Using Clifton's algorithm we obtain the matrix representing 
the 10 consequences in degree 4 of the alternative laws for partition $\lambda$; 
this is $M_\lambda = \phi_\lambda(f^1,\dots,f^{10})$ from Example \ref{altdeg4}, 
whose RCF equals $\texttt{allmat}(\lambda)$.

\subsubsection*{Degree 5}

Racine \cite{Racine1988} found two new polynomial identities in degree 5 for $\mathbb{O}$:
  \[
  \mathrm{(R1)} \quad [ [x,y]^2, x ] \equiv 0, 
  \qquad\qquad\qquad 
  \mathrm{(R2)} \quad V(t^2) - V(t) \circ t \equiv 0,
  \]
where $[x,y] = xy - yx$, $x \circ y = xy + yx$, and the square is with respect to the multiplication in $\mathbb{O}$.
For the definition of the operator $V$, see Theorem \ref{racinetheorem}.

\begin{remark} 
The multilinear form of the identity (R2) can be written as
  \[
  x^2 s_3^+(y,z,t) - xs_3^+(y,z,t) \circ x \equiv 0,
  \]
where
$s_3^+(x,y,z) = s_3(R_\circ(x),R_\circ(y),R_\circ(z))$ is an operator acting on the right,
$s_3$ is the standard polynomial of degree 3,
and $R_\circ(y)$ the (right) multiplication operator by $y$ using $\circ$: $x R_\circ(y) = x \circ y$;
this follows the notation of \cite{Racine1988}.
\end{remark}

(R1) and (R2) are satisfied by $\mathbb{O}$, but are not quite sufficient to generate $\mathrm{New}(5)$.
Hentzel and Peresi \cite{HP1997} proved that $[v,w] \circ [x,y]$ is a central polynomial; that is,
  \begin{equation}
  [ [v,w] \circ [x,y], z] \equiv 0,
  \tag{HP5}
  \end{equation}
is an identity of degree 5 for the algebra of octonions.
The $S_5$-module $\mathrm{New}(5)$ is generated by (HP5) and (R2).
Using our computational techniques, we obtained the results summarized in Table \ref{deg5table}.
Column $r_\text{all}$ gives the multiplicity of the irreducible $S_5$-module $[\lambda]$ in the module of all multilinear
identities satisfied by $\mathbb{O}$.
Column $r_\text{old}$ gives the multiplicity of $[\lambda]$ in the module of all consequences
of the alternative laws.
Column $r_\text{old+R1+R2}$ gives the multiplicity of $[\lambda]$ in the module generated by the consequences 
of the alternative laws and the two Racine identities (R1) and (R2).
From this we see that (R1) and (R2) are sufficient in the first four representations, 
but in each of the last three representations, the multiplicities are one less than required.
Column $r_\text{old+R2+HP5}$ gives the multiplicity of $[\lambda]$ in the module generated by the consequences 
of the alternative laws together with the identities (R2) and (HP5)
these values are the same as $r_\text{all}$ for all $\lambda$, and the corresponding matrices are equal.
The last two columns verify that, modulo the consequences of the alternative laws, 
neither of the identities (R2) or (HP5) generates $\mathrm{New}(5)$ by itself, 
and that these two identities are independent (neither is implied by the other).

  \begin{table}
  \begin{tabular}{lr|cccc|cc}
  $\lambda$ & $d_\lambda$ &\qquad $r_\text{all}$ &\quad $r_\text{old}$ & $r_\text{old+R1+R2}$ & $r_\text{old+R2+HP5}$ &
  $r_\text{old+R2}$ & $r_\text{old+HP5}$ 
  \\ \midrule
  5 & 1 &\qquad 13 &\quad 13 & 13 & 13 & 13 & 13 
  \\
  41 & 4 &\qquad 52 &\quad 52 & 52 & 52 & 52 & 52
  \\
  32 & 5 &\qquad 66 &\quad 65 & 66 & 66 & 65 & 66
  \\
  311 & 6 &\qquad 76 &\quad 75 & 76 & 76 & 76 & 75
  \\
  221 & 5 &\qquad 64 &\quad 63 & 63 & 64 & 63 & 64
  \\
  2111 & 4 &\qquad 48 &\quad 46 & 47 & 48 & 47 & 47 
  \\
  11111 & 1 &\qquad 11 &\quad 10 & 10 & 11 & 10 & 11
  \\ \midrule
  \end{tabular}
  \caption{Multiplicities of irreducible modules in degree 5}
  \label{deg5table}
  \end{table}

  \begin{figure}
  \begin{center}
  \small
  \begin{align*}
  \mathrm{allmat}(\lambda)
  &=  
  \left[
  \begin{array}{rrrrrrrrrrrrrr}
  1 & 0 & 0 & 0 & 0 & 0 & 0 & 0 &\;\;\, 0 & 0 & 0 & -2 &  1 &  0 \\
  0 & 1 & 0 & 0 & 0 & 0 & 0 & 0 & 0 & 0 & 0 & -2 &  0 &  1 \\
  0 & 0 & 1 & 0 & 0 & 0 & 0 & 0 & 0 & 0 & 0 & -1 &  0 &  0 \\
  0 & 0 & 0 & 1 & 0 & 0 & 0 & 0 & 0 & 0 & 0 & -3 &  1 &  1 \\
  0 & 0 & 0 & 0 & 1 & 0 & 0 & 0 & 0 & 0 & 0 &  0 & -1 &  0 \\
  0 & 0 & 0 & 0 & 0 & 1 & 0 & 0 & 0 & 0 & 0 & -1 &  0 &  0 \\
  0 & 0 & 0 & 0 & 0 & 0 & 1 & 0 & 0 & 0 & 0 & -1 &  0 &  0 \\
  0 & 0 & 0 & 0 & 0 & 0 & 0 & 1 & 0 & 0 & 0 & -1 &  0 &  0 \\
  0 & 0 & 0 & 0 & 0 & 0 & 0 & 0 & 1 & 0 & 0 & -1 &  0 &  0 \\
  0 & 0 & 0 & 0 & 0 & 0 & 0 & 0 & 0 & 1 & 0 & -2 &  0 &  1 \\
  0 & 0 & 0 & 0 & 0 & 0 & 0 & 0 & 0 & 0 & 1 &  1 & -1 & -1
  \end{array}
  \right]
  \\
  \mathrm{oldmat}(\lambda)
  &=  
  \left[
  \begin{array}{rrrrrrrrrrrrrr}
  1 & 0 & 0 & 0 & 0 & 0 & 0 & 0 & -3 & 0 & 0 &  1 &  1 &  0 \\
  0 & 1 & 0 & 0 & 0 & 0 & 0 & 0 & -1 & 0 & 0 & -1 &  0 &  1 \\
  0 & 0 & 1 & 0 & 0 & 0 & 0 & 0 & -2 & 0 & 0 &  1 &  0 &  0 \\
  0 & 0 & 0 & 1 & 0 & 0 & 0 & 0 & -2 & 0 & 0 & -1 &  1 &  1 \\
  0 & 0 & 0 & 0 & 1 & 0 & 0 & 0 & -1 & 0 & 0 &  1 & -1 &  0 \\
  0 & 0 & 0 & 0 & 0 & 1 & 0 & 0 & -1 & 0 & 0 &  0 &  0 &  0 \\
  0 & 0 & 0 & 0 & 0 & 0 & 1 & 0 &  0 & 0 & 0 & -1 &  0 &  0 \\
  0 & 0 & 0 & 0 & 0 & 0 & 0 & 1 & -2 & 0 & 0 &  1 &  0 &  0 \\
  0 & 0 & 0 & 0 & 0 & 0 & 0 & 0 &  0 & 1 & 0 & -2 &  0 &  1 \\
  0 & 0 & 0 & 0 & 0 & 0 & 0 & 0 &  0 & 0 & 1 &  1 & -1 & -1
  \end{array}
  \right]
  \end{align*}
  \vspace{-6mm}
  \end{center}
  \caption{Matrices for new octonion identities ($\lambda = 11111$)}
  \label{octdeg5mat}
  \end{figure}

We conclude this discussion by presenting explicit matrices to illustrate how we can obtain 
new identities from the matrix units in the group algebra. 
For the last partition $\lambda = 11111$ with dimension $d_\lambda = 1$, 
we obtain the matrices $\mathrm{allmat}(\lambda)$ and $\mathrm{oldmat}(\lambda)$
displayed in Figure \ref{octdeg5mat}, with ranks of 11 and 10 respectively.
The row space of $\mathrm{oldmat}(\lambda)$ is a subspace of the row space of $\mathrm{allmat}(\lambda)$.
Row 9 of $\mathrm{allmat}(\lambda)$ has a leading 1 in column 9, but $\mathrm{oldmat}(\lambda)$ 
has no leading 1 in this column.
Therefore row 9 of $\mathrm{allmat}(\lambda)$ represents an identity satisfied by $\mathbb{O}$
which is not a consequence of the alternative laws.
In terms of matrix units, this row is $E_{9,9} - E_{9,12}$ and therefore represents the following identity:
  \[
  \sum_{\sigma \in S_5} \epsilon(\sigma) 
  \Big[ 
  ( x_{\sigma(1)} x_{\sigma(2)} ) ( x_{\sigma(3)} ( x_{\sigma(4)} x_{\sigma(5)} ) )
  -
  x_{\sigma(1)} ( ( x_{\sigma(2)} x_{\sigma(3)} ) ( x_{\sigma(4)} x_{\sigma(5)} ) )
  \Big] 
  \equiv 0.
  \]

\subsubsection*{Degree 6}

  \begin{table}
  \begin{tabular}{lr|rrr}
  $\lambda$ & $d_\lambda$ &\qquad $r_\text{all}$ &\qquad $r_\text{alt}$ &\qquad $r_\text{old}$
  \\ \midrule
  6 & 1 &\qquad 41 &\qquad 41 &\qquad 41
  \\
  51 & 5 &\qquad 205 &\qquad 205 &\qquad 205
  \\
  42 & 9 &\qquad 372 &\qquad 369 &\qquad 372
  \\
  411 & 10 &\qquad 409 &\qquad 406 &\qquad 409
  \\
  33 & 5 &\qquad 207 &\qquad 205 &\qquad 207
  \\
  321 & 16 &\qquad 660 &\qquad 652 &\qquad 660
  \\
  3111 & 10 &\qquad 407 &\qquad 400 &\qquad 407
  \\
  222 & 5 &\qquad 204 &\qquad 202 &\qquad 204
  \\
  2211 & 9 &\qquad 368 &\qquad 360 &\qquad 368
  \\
  21111 & 5 &\qquad 202 &\qquad 194 &\qquad 202
  \\ 
  111111 & 1 &\qquad 40 &\qquad 36 &\qquad 39
  \\ \midrule
  \end{tabular}
  \caption{Multiplicities of irreducible modules in degree 6}
  \label{deg6table}
  \end{table}
  
Hentzel and Peresi \cite{HP1997} discovered a multilinear central polynomial of degree 5 for $\mathbb{O}$,
which produces the following polynomial identity where $(x,y,z)=(xy)z-x(yz)$ is the associator, and $S_5$ permutes
$x,y,z,t,w$:
  \begin{equation}
  \tag{HP6}
  \Big[ 
  \sum_{\sigma \in S_5} \sgn(\sigma) \big(  24 x(y(z(tw))) + 8 x((y,z,t)w) - 11 (x,y,(z,t,w)) \big), \, 
  u \,
  \Big] 
  \equiv 0.
  \end{equation}
Shestakov and Zhukavets \cite{SZ2009} found a somewhat simpler central polynomial  which produces the
following polynomial identity:
  \begin{equation}
  \tag{SZ}
  \Big[ 
  \sum_{\sigma \in S_5} \sgn(\sigma) \big( 12 ([x,y][z,t])w - [[[[x,y],z],t],w] \big), \,
  u \,
  \Big]
  \equiv 0.
  \end{equation}
Using our computational techniques, we obtained the results in Table \ref{deg6table}.
Column $r_\text{all}$ gives the multiplicity of the irreducible $S_6$-module $[\lambda]$ in the module of all multilinear
identities satisfied by $\mathbb{O}$.
Column $r_\text{alt}$ gives the multiplicity of $[\lambda]$ in the module of all consequences
of the alternative laws.
Column $r_\text{old}$ gives the multiplicity of $[\lambda]$ in the module generated by the consequences 
of the alternative laws and the identities (R2) and (HP5).
From this we see that $r_\text{old} = r_\text{all}$ except for $\lambda = 111111$ where the difference is 1;
hence there is a new identity which alternates in all 6 variables.
We further checked that the multiplicities for the alternative laws, (R2) and (HP5) together with either (HP6) or (SZ)
are equal to $r_\text{all}$ for all $\lambda$; hence either (HP6) or (SZ) can be taken as the new generator in degree 6.

Our computations led us to the following new identity in degree 6, which involves only two of the 42 association types,
alternates in all 6 variables, and does not have the form $[ f(v,w,x,y,z), u ] \equiv 0$ where $f$ is a 
central polynomial:
  \begin{equation}
  \label{newidentity6}
  \sum_{\sigma \in S_6} \sgn(\sigma) 
  \Big(
  5
  x_1 ( x_2 ( ( x_3 x_4 ) ( x_5 x_6 ) ) )
  -
  x_1 ( x_2 ( x_3 ( x_4 ( x_5 x_6 ) ) ) )
  \Big)
  \equiv 0.
  \end{equation}
We can use this identity instead of (HP6) or (SZ) as the new generator in degree 6.

\subsubsection*{Degree 7}

Our computations indicate that there are no new identities in degree 7.

\begin{theorem}
Every multilinear polynomial identity of degree $\le 7$ satisfied by the octonion algebra $\mathbb{O}$ 
is implied by the consequences of the alternative laws, the identities \emph{(R2)} and \emph{(HP5)}, 
and either \emph{(HP6)} or \emph{(SZ)} or identity \eqref{newidentity6}.
\end{theorem}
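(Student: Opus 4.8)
The plan is to organize everything as a finite computation indexed by the irreducible representations of $S_n$ for $n \le 7$, using the representation-theoretic machinery of Section~\ref{allidentities}. For $n \le 6$ the statement is already in hand: for $n \le 5$ it combines the theorems of Racine and of Hentzel--Peresi quoted above with the multiplicity computations summarized in Table~\ref{deg5table} (the relevant columns agree for every $\lambda \vdash 5$), and for $n = 6$ it follows from Table~\ref{deg6table} together with the observation that identity~\eqref{newidentity6} generates the same $S_6$-submodule, modulo the consequences of the alternative laws, (R2) and (HP5), as (HP6) and (SZ). So the real content is degree $7$, where we must show that every multilinear identity for $\mathbb{O}$ is already a consequence of the listed identities.

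First I would fix $n = 7$ and set up the data: the $t = 132$ association types in degree $7$, the $15$ partitions $\lambda \vdash 7$ with their dimensions $d_\lambda$ (running up to $35$, attained at $\lambda = 421$ and $\lambda = 3211$), and the matrix units $U^\lambda_{1j} = \psi(E^\lambda_{1j}) \in \mathbb{F} S_7$ computed from the idempotents $E^\lambda_i$ and the Clifton matrices as in Section~\ref{matrixunits}. For each $\lambda$ I would then run the fill-and-reduce form of Bondari's algorithm on the octonions: choose pseudorandom $a_1, \dots, a_7 \in \mathbb{O}$, evaluate the $t d_\lambda$ polynomials $[U^\lambda_{1j}]_k$ ($1 \le k \le t$, $1 \le j \le d_\lambda$) using the structure constants of Table~\ref{multiplicationoctonions} with $\alpha = \beta = \gamma = -1$, store each resulting coordinate vector in $\mathbb{F}^8$ as a column of the lower block, row-reduce, and iterate until the rank of the $(t d_\lambda + 8) \times t d_\lambda$ matrix stabilizes. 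Its nullspace, in canonical form, is the matrix $\texttt{allmat}(\lambda)$ recording the full module of degree-$7$ identities of $\mathbb{O}$ in partition $\lambda$, of rank $a_\lambda$.

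Next I would assemble, for each $\lambda \vdash 7$, the representation matrix $\texttt{oldmat}(\lambda)$ of all degree-$7$ consequences of the previously known identities: the linearized alternative laws, (R2), (HP5), and one of (HP6), (SZ), or~\eqref{newidentity6}. Their liftings to degree $7$ are listed by Definition~\ref{liftingdefinition}, and each lifting is mapped into $\bigoplus_{k=1}^{t} M_{d_\lambda}(\mathbb{F})$ by Clifton's theorem, that is, via $R^\lambda(p) = (A^\lambda_\iota)^{-1} A^\lambda_p$ applied termwise in each association type; stacking and reducing these blocks gives $\texttt{oldmat}(\lambda)$ of rank $o_\lambda$. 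By Proposition~\ref{Solutionofbasicproblem}, for each fixed $\lambda$ the theorem is equivalent to $o_\lambda = a_\lambda$, and when the ranks agree the reduced matrices coincide, so there are no new identities in that partition. Carrying this out for all $15$ partitions of $7$ and finding $o_\lambda = a_\lambda$ throughout finishes the proof. Because $\mathbb{O}$ has integral structure constants and any rational identity rescales to a primitive integral one whose reduction mod $p$ is nonzero, it suffices to run all of the linear algebra over a single prime field $\mathbb{F}_p$ with $p > 7!$; Remark~\ref{charpremark} guarantees that $7!$ is the relevant common denominator, and a large $p$ makes the chance of a modular accident negligible.

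The main obstacle is one of scale and of guaranteeing reliability, not of new ideas: the largest matrices have $132 \cdot 35 = 4620$ columns, each $U^\lambda_{1j}$ is a sum of roughly $7! = 5040$ permutations that must be expanded in all $132$ association types and then repeatedly reduced, and the evaluation step in Bondari's algorithm is the dominant cost. The delicate practical points are controlling memory (which is why we work over a fixed $\mathbb{F}_p$ rather than over $\mathbb{Q}$), confirming that the ranks $a_\lambda$ and $o_\lambda$ are truly final by iterating well past apparent stabilization, and re-running with a second prime as a consistency check. Once the equalities $o_\lambda = a_\lambda$ have been verified for every $\lambda \vdash 7$, the conclusion follows with no further difficulty.
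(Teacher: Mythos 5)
Your proposal is correct and reconstructs exactly the approach the paper (tersely) asserts: decompose by partition $\lambda \vdash 7$, compute $\texttt{allmat}(\lambda)$ via fill-and-reduce on the matrix units $U^\lambda_{1j}$ in each of the $t=132$ association types, compute $\texttt{oldmat}(\lambda)$ from the degree-$7$ consequences of the known identities via Clifton matrices, and conclude from $o_\lambda = a_\lambda$ for all $\lambda$; the modular-arithmetic justification (integer structure constants, common denominator $7!$, and the inequalities $o_\lambda^p \le o_\lambda^{\mathbb{Q}} \le a_\lambda^{\mathbb{Q}} \le a_\lambda^p$ forcing equality throughout) is also the paper's reasoning. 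The only content the paper itself supplies beyond the methodology of Sections 2.2--2.8 is the computational assertion that the ranks agree, and your outline matches that strategy precisely.
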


We therefore conclude this paper with the following conjecture.

\begin{conjecture}
The alternative laws together with the identities \emph{(R2)}, \emph{(HP5)}, and either \emph{(HP6)} or \emph{(SZ)} 
or \eqref{newidentity6}, generate the $T$-ideal of polynomial identities satisfied by the octonion algebra $\mathbb{O}$.
\end{conjecture}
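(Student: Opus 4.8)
The statement in degrees $\le 6$ is already established: the case $n \le 4$ is the content of Example~\ref{altdeg4} and the ``Degree 4'' verification, while the cases $n = 5$ and $n = 6$ are recorded in Tables~\ref{deg5table} and~\ref{deg6table} (the identities (R2), (HP5) and (HP6)/(SZ)/\eqref{newidentity6} being genuine identities of $\mathbb{O}$ by Theorems~\ref{racinetheorem} and~\ref{HP97} and the degree-$6$ discussion). So the plan is to reduce everything to degree $7$ and to prove there that $\mathrm{Old}(7) = \mathrm{All}(7)$, i.e. that no new identities occur. By Proposition~\ref{Solutionofbasicproblem} it suffices to show that for each of the $15$ partitions $\lambda \vdash 7$ the multiplicity of the irreducible $S_7$-module $[\lambda]$ in the space of all multilinear identities of degree $7$ satisfied by $\mathbb{O}$ coincides with its multiplicity in the submodule generated by the degree-$7$ consequences of the identities of degree $\le 6$.

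First I would run, for each $\lambda \vdash 7$, the representation-theoretic fill-and-reduce (Bondari) computation. By Lemma~\ref{firstrow} it suffices to use the first-row matrix units, so one forms the images $[U^\lambda_{1j}]_k \in \mathbb{F} S_7$ for $1 \le j \le d_\lambda$ and each of the $t = C_6 = 132$ association types $k$, evaluates them at pseudorandom octonions using the structure constants from Table~\ref{multiplicationoctonions}, and iterates until the rank of the evaluation matrix $M$ stabilises; the reduced nullspace is $\texttt{allmat}(\lambda)$, of rank $a_\lambda$. Next I would compute $\texttt{oldmat}(\lambda)$: by the already-proved degree-$\le 6$ results a set of $S_6$-module generators for $\mathrm{All}(6)$ is available, and its degree-$7$ consequences (Definition~\ref{liftingdefinition}, applied as in the module-generators algorithm) generate $\mathrm{Old}(7)$; one computes the representation matrices of the terms of each such consequence in each association type by Clifton's theorem, stacks them as in Definition~\ref{philambda}, and row-reduces to obtain $\texttt{oldmat}(\lambda)$ of rank $o_\lambda$. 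The theorem then follows from the numerical fact $o_\lambda = a_\lambda$ for all $\lambda \vdash 7$: this forces $\texttt{oldmat}(\lambda)$ and $\texttt{allmat}(\lambda)$ to have the same row space (Proposition~\ref{Solutionofbasicproblem}), hence $\mathrm{Old}(7) = \mathrm{All}(7)$, and combined with the degree-$\le 6$ results this gives the claim.

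Since these matrices have $t\,d_\lambda$ columns, up to $132 \cdot 35 = 4620$, and each $U^\lambda_{1j}$ has on the order of $7! = 5040$ terms, the linear algebra is carried out over $\mathbb{F}_p$ for a convenient prime $p$. This is conclusive: the structure constants of $\mathbb{O}$ are integral, the Clifton representation matrices $R^\lambda(p)$ have integer entries, and the idempotents carry denominators dividing $7!$ (Remark~\ref{charpremark} and the discussion of rational versus modular arithmetic), so for $p > 7!$ one has the chain
  \[
  o_\lambda^{(p)} \;\le\; o_\lambda^{\mathbb{Q}} \;\le\; a_\lambda^{\mathbb{Q}} \;\le\; a_\lambda^{(p)},
  \]
the first inequality because reduction mod $p$ can only lower the rank of the (integer) matrix of consequences, the middle because $\mathrm{Old}(7) \subseteq \mathrm{All}(7)$, and the last because every rational identity, cleared of denominators, reduces to an identity over $\mathbb{F}_p$. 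Hence a modular verification $o_\lambda^{(p)} = a_\lambda^{(p)}$ pinches the chain and forces $o_\lambda^{\mathbb{Q}} = a_\lambda^{\mathbb{Q}}$.

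The main obstacle is the scale of the computation rather than any conceptual difficulty: expanding the $132$ association types of the matrix units $U^\lambda_{1j}$, performing the corresponding octonion evaluations, and row-reducing matrices with several thousand columns, for both the ``all'' and the ``old'' side and for all fifteen partitions of $7$ --- the ``old'' side additionally requiring the complete list of degree-$7$ consequences of the lower-degree generators. A secondary point needing care is the probabilistic nature of the fill-and-reduce step: one must iterate enough for the rank of $M$ to stabilise, and it is prudent to repeat the computation with a second prime to guard against an unlucky choice of pseudorandom octonions. Once $o_\lambda = a_\lambda$ is confirmed for every $\lambda \vdash 7$, there are no new identities in degree $7$, and the theorem follows.
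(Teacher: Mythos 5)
The statement you are asked to prove is a \emph{conjecture}, and the paper in fact offers no proof of it; it is stated precisely because the authors could not prove it. Your argument establishes the paper's preceding \emph{theorem} (every multilinear identity of degree $\le 7$ for $\mathbb{O}$ follows from the listed identities), but that is a strictly weaker statement. The conjecture asserts that those identities generate the entire $T$-ideal $T(\mathbb{O})$, which includes identities of every degree $n \ge 8$. Your method --- fill-and-reduce plus the module-generators algorithm, compared rank-by-rank over the partitions of $7$ --- is a finite computation in a single degree; it says nothing whatsoever about degrees $8, 9, \dots$. Nor can it be iterated to a conclusion: verifying $\mathrm{Old}(n) = \mathrm{All}(n)$ for $n = 8$, $9$, $\dots$ one degree at a time is an infinite process and never closes the argument.

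To actually prove the conjecture one would need a degree bound: a theoretical argument that if $\mathrm{Old}(n) = \mathrm{All}(n)$ up to some explicit $N$ then $\mathrm{Old}(n) = \mathrm{All}(n)$ for all $n > N$. Iltyakov's theorem (cited in the paper) guarantees that $T(\mathbb{O})$ is finitely generated in characteristic $0$, but gives no effective bound on the degrees of the generators, so it does not convert your degree-$7$ verification into a proof. Absent such a bound, your computation supports the conjecture as evidence (exactly as the paper intends) but does not prove it. Your modular-arithmetic pinching argument and the fill-and-reduce/Clifton machinery are applied correctly for what they do establish --- the degree-$7$ theorem --- but you should not present this as a proof of the conjecture itself.
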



\section*{Acknowledgements}

Murray Bremner was supported by a Discovery Grant from NSERC, the Natural Sciences and
Engineering Research Council of Canada.
Sara Madariaga was supported by a Postdoctoral Fellowship from PIMS, the Pacific Institute
for the Mathematical Sciences.
Luiz Peresi thanks the Department of Mathematics and Statistics at the University of Saskatchewan
for its hospitality and financial support during his visits in summer 2012 and spring 2014.


\end{document}